\newcommand{\A}{\mbox{${\mathcal A}$}}
\newcommand{\B}{\mbox{${\mathcal B}$}}
\newcommand{\E}{\mbox{${\mathcal E}$}}
\newcommand{\J}{\mbox{${\mathcal J}$}}
\renewcommand{\S}{\mbox{${\mathcal S}$}}
\newcommand{\T}{\mbox{${\mathcal T}$}}
\newcommand{\sspan}{\operatorname{span}}
\newcommand{\complexos}{\mathbb{ C}}
\newcommand{\naturais}{\mathbb {N}}
\newtheorem{theorem}{Theorem}[section]
\newtheorem{lemma}[theorem]{Lemma}
\newtheorem{corollary}[theorem]{Corollary}
\newtheorem{proposition}[theorem]{Proposition}
\theoremstyle{remark}
\newtheorem{remark}[theorem]{\bf Remark}
\theoremstyle{definition}
\newtheorem{example}[theorem]{\bf Example}
\newtheorem{conjecture}[theorem]{Conjecture}
\numberwithin{equation}{section}
\def\sqr#1#2{{\,\vcenter{\vbox{\hrule height.#2pt\hbox{\vrule width.#2pt
height#1pt \kern#1pt\vrule width.#2pt}\hrule height.#2pt}}\,}}
\def\J{\mathcal{J}}
\def\E{\mathcal{E}}
\def\span{\text{span}}
\def\supp{\Phi_{\mathcal J}}
\def\suppe{\Phi_{\mathcal J}^e}
\def\phie{\Phi^e}
\def\supm{\Phi_{{\mathcal M}(\Phi,\Psi)}}
\def\supme{\Phi_{{\mathcal M}(\Phi,\Psi)}^e}
\begin{document}

\title[Bimodules of Banach space nest algebras]{Bimodules of Banach space nest algebras}

\author[L. Duarte]{Lu\'is Duarte}
\address{Dipartimento di Matematica \\
Universit\`a degli Studi di Genova \\
Via Dodecaneso 35\\
I-16146 Genova, Italia}
\email{duarte@dima.unige.it}

\author[L. Oliveira]{Lina Oliveira}
\address{Center for Mathematical Analysis\\
Geometry and Dynamical Systems\\
{\sl{and}}
Department of Mathematics\\
Instituto Superior T\'ecnico \\
Universidade de Lisboa\\
Av. Rovisco Pais\\
1049-001 Lisboa, Portugal}
\email{lina.oliveira@tecnico.ulisboa.pt}

\date{\today}
\thanks{Partially supported by FCT/Portugal grant  UID/MAT/04459/2013 and BL186/2016}
\keywords{Nest algebra, bimodule,  essential support function support function, support function pair.}

\begin{abstract}
We extend to Banach space nest algebras the theory of essential supports and support function pairs of their bimodules, thereby  obtaining Banach space counterparts of long established results for Hilbert space nest algebras. Namely, given a Banach space nest algebra $\A$, we charaterise
 the maximal and the minimal $\A$-bimodules having a given essential support function or  support function pair.  These characterisations are complete except for the minimal $\A$-bimodule corresponding to a  support function pair, in which case we make some headway.  
 We also show   that the weakly closed bimodules of a Banach space nest algebra are exactly those that are  reflexive operator spaces. To this end, we crucially prove that reflexive bimodules determine uniquely a certain class of admissible support functions.
\end{abstract}

\subjclass[2020]{47A35, 47L15, 46H10, 46H25, 16D20}

\maketitle

\section{Introduction}
Nest algebras on Hilbert space were introduced in 1963 by J. R. Ringrose as a generalisation of a class of operator algebras including, for example, the algebra of upper triangular matrices and some of the maximal triangular algebras appearing in the work
of Kadison and Singer (see \cite{kadisonsinger},\cite{Ringrose}).
 These algebras have attracted a vast amount of research over the years, with Davidson's book \cite{Davidsonbook} being the standard reference for a comprehensive account of their basic theory. 

A natural question is, of course, to what extent results in the Hilbert space setting can
be seen to hold for Banach spaces, a question already hinted at in \cite{Ringrose}. Although many authors have tackled this problem, it is
still  the case that nest algebras on Banach spaces are far less explored in the literature.
One of the possible reasons for this being the difficulty presented by the absence of orthogonal
projections intrinsically associated with the structure of an abstract Banach space, a key tool used in the investigation of abstract Hilbert space nest algebras.

A concrete example of this situation is provided by the bimodules of a  nest
algebra. In fact, (weakly closed) bimodules of a given nest algebra on a Hilbert space
were completely characterized by Erdos and Power as early as 1982 (see \cite{ErdosPower}). Those
were characterized in terms of order homomorphisms on the corresponding nest, later
to be called support functions (see \cite{DDH}). Similar
descriptions have been shown to hold in the case of reflexive operator algebras and, more
generally, of reflexive spaces of operators (see \cite{BO}, \cite{han}).
 Davidson, Donsig and Hudson \cite{DDH} went further to introduce the essential support
function of a given norm closed bimodule, which allowed for finding the maximal and
the minimal norm closed bimodule having the same essential support function. It is also
investigated in \cite{DDH} how, conversely, essential support functions and the so-called support
function pairs determine themselves bimodules.

Although several results have already been generalized from the Hilbert space setting 
to that of Banach spaces (see, e.g., \cite{han2}, \cite{LiLi}, \cite{LiLu}, \cite{spadounakis}),  that has not been the case in what concerns essential supports and their
associated bimodules.

In this work, the central subjects under scrutiny  are bimodules of nest algebras on
Banach spaces with a particular focus on support functions (Section \ref{s_bim}), essential
support functions, support function pairs  (Section \ref{s_essbim}) and their associated bimodules. The approach to the
investigation of bimodules of nest algebras on Banach spaces through support functions
appears in the literature, although not quite in the way it is presented in Section \ref{s_bim}.
 By contrast,  Section  \ref{s_essbim} consists entirely of new material, to the best of our knowledge. In this section,
 we introduce the  definitions   of essential support function and support function pair in the Banach space context which will allow for obtaining results analogous to some of those in \cite{DDH}.
 It is however worth pointing out that the extension of existing results for Hilbert space nest algebras to the Banach space setting is neither automatic nor without difficulties. In fact, for example, the similarity results for nests in Hilbert spaces have been shown to fail for Banach spaces, e.g., the space $L^1$ (see \cite{ALW}). The absence of an intrinsic notion of orthogonality has also proved to be crucial as an impediment to an easy generalisation of results.

This work  is organised as follows. 
In Section \ref{s_bim}, bimodules and support functions are introduced and their interplay is
investigated, an investigation that does not seem to be available in the
existing literature. The characterisation of reflexive bimodules in terms of support
functions leads to proving  that reflexive bimodules are in 1-1 correspondence with the admissible   support functions fixing the $\{0\}$ set. 

The  main result of this section (Theorem \ref{closedcar})  shows that the weakly closed bimodules of a Banach space nest algebra are exactly those that are  reflexive operator spaces. 
 
 Finite rank operators play an important  role throughout Section 2. Crucially, given a support function, the finite rank operators in the associated bimodule \eqref{01} are   shown to be decomposable within said bimodule (Theorem \ref{finiteranksum}). In fact, this bimodule is precisely the  maximal (weakly closed) bimodule   having this support function, should the latter be an admissible support function fixing $\{0\}$ (see Remark \ref{rem3}).

 In Section \ref{s_essbim}, we introduce the notions of essential support function and support function pair on a given
nest and their associated bimodules of the corresponding nest algebra, obtaining results
analogous to those in \cite{DDH}, albeit our definition of essential support function be  an extension rather than  a 
generalisation of that given in \cite{DDH}. 
To mention one of the differences, notwithstanding   each essential support function still determining a largest $\T(\E)$-bimodule, the latter is not necessarily closed, unlike its Hilbert space counterpart  (see Example \ref{exa}). 

It is also the case that we are able to show that each essential support function $\Psi$  determines a smallest norm closed $\T(\E)$-bimodule whose essential support is $\Psi$ (see Theorem \ref{p_bimodess}).
 More precisely, we obtain  the  maximal  and the minimal bimodules with a given
essential support function (Theorem \ref{p_bimodess}) and the maximal bimodule having some fixed support function pair (Theorem \ref{t_maxpair}). As to finding the minimal bimodule corresponding to a given support function pair, we make some headway inasmuch as we are able to construct a bimodule which we believe to be minimal amongst the norm closed $\T(\E)$-bimodules having this support function pair (see Proposition \ref{p_m^0}, Corollary \ref{c_m^0} and Conjecture \ref{co_final}).

Some of the results mentioned above require the nest to be restricted to a class. However, this restriction is either similar to that existing  in the Hilbert space setting   for the corresponding result  or covers a vast class of nest algebras.\\

We end this section with  some background and a few facts needed in the sequel.

 Let $\mathcal B(X)$ be the algebra of the bounded linear operators on a complex Banach space $X$ and let $X^*$ the space consisting  of the bounded linear functionals on $X$. 
 Given a subset $S$ of $X$, let $[S]$ be the smallest norm closed subspace of $X$ containing $S$. 
The \emph{annihilator} $S^{\perp}$ of $S$ is the weak*-closed subspace of $X^*$ defined by 
$$S^{\perp}=\{f\in X^*: f(S)=\{0\}\}$$
and the \emph{pre-annihilator} $M_{\perp}$ of a subset $M$ of $X^*$ is the norm closed subspace of $X$ defined by 
$$M_{\perp}=\{x\in X: f(x)=0\quad\forall f\in M\}.$$
It is well-known that $(S^{\perp})_\perp=[S]$ and that $(M_\perp)^\perp=\overline M^{w^*}$, where $\overline M^{w^*}$ is the closure of $M$ in the weak*-topology.

Given a subset $A$ of $\mathcal B(X)$ we denote by $\overline A^{\mbox{{\tiny SOT}}}$ and $\overline A^{\mbox{{\tiny WOT}}}$ the closure of $A$ in the strong and weak operator topologies, respectively, and    $\overline A$ will be used when  referring to the closure of $A$ in the norm topology.

The set consisting of all the closed subspaces of $X$ is partially ordered by inclusion. Consider the usual operations of meet $\wedge$ and join $\vee$ on this set: for a  family  $\{M_{i}\colon i\in I\}$ of  closed subspaces of $X$, its \emph{infimum} $\wedge_{i\in I} M_{i}$ coincides with the intersection $\cap_{i\in I} M_{i}$ and its \emph{supremum} 
$\vee_{i\in I} M_{i}$ is  the smallest closed subspace containing all the elements in  $\{M_{i}\colon i\in I\}$, i.e., 
$$\wedge_{i\in I} M_{i}=\cap_{i\in I}M_i,  \qquad \quad \vee_{i\in I}M_i=\overline{\span}\left(\cup_{i\in I}M_i\right ).$$

A family  of closed subspaces of $X$ containing $\{0\}$ and $X$ and closed under meets $\wedge$ and joins $\vee$  is called a \emph{subspace lattice}.  A \emph{nest} $\E$ is a totally ordered subspace lattice.
The \emph{nest algebra} $\mathcal T(\mathcal E)$ associated with $\mathcal E$ consists of the operators in  $\mathcal B(X)$ leaving invariant  every subspace of $\mathcal E$, i.e.,
$$\mathcal T(\mathcal E)=\{T\in\mathcal B(X):TE\subseteq E \quad\forall E\in \mathcal E\}.$$
The algebra $\T(\mathcal E)$ is {weakly (operator) closed and reflexive}, i.e., {$\operatorname{Alg}\operatorname{Lat}  \T(\E)=\T(\E)$.} In fact, the nest $\E$ is itself a reflexive subspace lattice, that is,  
$\operatorname{Lat}\operatorname{Alg} (\E)=\E$. 
Recall that, given a subspace  lattice $\mathcal{L}$, $\operatorname{Alg}(\mathcal{L})$ is the set of operators in $\B(X)$ that leave invariant each element of $\mathcal{L}$, and that given a subset $\S$ of $\B(X)$, $\operatorname{Lat} \S$ is the set of closed subspaces invariant under $\S$ 
(see \cite{A, Halmos, Halmos1, Ringrose, spadounakis}). 

\begin{lemma}\label{finitesubspacenotzero}
Let $\mathcal E$ be a nest in a Banach space $X$, let $W\neq\{0\}$ be a finite dimensional subspace of $X$ and let 
$L=\wedge\{N\in\E:N\cap W\neq\{0\}\}$. Then $L$ is the smallest element in $\E$ whose intersection with $W$ strictly contains $\{0\}$. 
\end{lemma}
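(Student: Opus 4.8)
The plan is to reduce the assertion to the single fact that $L$ itself meets $W$ nontrivially, and to locate the one spot where finite dimensionality of $W$ is genuinely needed. Write $\mathcal F=\{N\in\E:N\cap W\neq\{0\}\}$. Since $W\neq\{0\}$ we have $X\in\mathcal F$, so $\mathcal F$ is nonempty; because $\E$ is a subspace lattice it is closed under (arbitrary) meets, and hence $L=\wedge\mathcal F=\bigcap_{N\in\mathcal F}N$ is again an element of $\E$. By the very definition of the meet as an intersection, $L\subseteq N$ for every $N\in\mathcal F$. Thus the whole lemma will follow as soon as I show that $L$ belongs to $\mathcal F$, i.e.\ that $L\cap W\neq\{0\}$: indeed $L$ will then be an element of $\E$ whose intersection with $W$ strictly contains $\{0\}$, and it will be contained in every other such element.

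To prove $L\cap W\neq\{0\}$, I would look at the family of subspaces $N\cap W$, $N\in\mathcal F$. These are all nonzero, and since $\E$ is totally ordered by inclusion they form a chain inside the finite dimensional space $W$; hence the set of their dimensions is a nonempty set of positive integers and has a least element $d\geq 1$. Fix $N_0\in\mathcal F$ with $\dim(N_0\cap W)=d$. The key claim is that $N_0\cap W\subseteq N$ for every $N\in\mathcal F$: if $N_0\subseteq N$ this is immediate, whereas if $N\subseteq N_0$ then $N\cap W\subseteq N_0\cap W$, and minimality of $d$ forces $N\cap W=N_0\cap W$, so again $N_0\cap W\subseteq N$. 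Intersecting over all $N\in\mathcal F$ gives $\{0\}\neq N_0\cap W\subseteq\bigcap_{N\in\mathcal F}N=L$, and since $N_0\cap W\subseteq W$ we conclude $L\cap W\neq\{0\}$, hence $L\in\mathcal F$; combined with the first paragraph this finishes the proof.

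The only point that requires any care — and it is really the crux — is that an infinite meet in $\E$ could a priori collapse the intersection with $W$ down to $\{0\}$; what rules this out is precisely that $W$ is finite dimensional, so that the descending chain of nonzero subspaces $N\cap W$ cannot have strictly decreasing dimensions indefinitely and therefore attains a minimum. I do not expect the "smallest element'' part to present any difficulty at all, as it is a purely formal consequence of $L$ being defined as a meet.
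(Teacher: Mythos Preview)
Your proof is correct and follows essentially the same approach as the paper's: both reduce to showing $L\cap W\neq\{0\}$ and exploit the finite dimensionality of $W$ to conclude that the chain $\{N\cap W:N\in\mathcal F\}$ takes only finitely many values, so the infimum is already attained. The paper phrases this as ``there are finitely many distinct intersection sets, hence $L\cap W$ is a finite intersection of nested nonzero subspaces,'' while you make the same point by selecting an $N_0$ of minimal intersection dimension and verifying $N_0\cap W\subseteq N$ for every $N\in\mathcal F$; these are two wordings of the same argument.
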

\begin{proof} 
It suffices to show that $L\cap W$ does not coincide with $\{0\}$. Notice that 
$$
L\cap W=\wedge\{N\cap W:N\in\E, N\cap W\neq\{0\}\}
$$
and that $\{N\in\E:N\cap W\neq\{0\}\}$ is non-empty, since it contains $X$.

Since  $\E$ is a chain and $W$ is finite dimensional,  for all $N,N'\in\E$, we have that $N\cap W=N'\cap W$ if and only if $\dim N\cap W=\dim N'\cap W$. Hence,  there exist finitely many  intersection sets $N\cap W$ when   $N\in \E$. Consequently, $L\cap W$ is the intersection of a finite number of totally ordered non-zero subspaces, yielding  that $L\cap W$ is non-zero.   
\end{proof}

In what follows we adopt the following notation: the symbol $\subset$ will be used exclusively for strict inclusions whereas  $\subseteq$ encompasses also the equality between sets.

For each   $E$ in a nest $\E$,  define 
$$
E_-=\vee\{F\in\E:F\subset E\}\quad\text{  and  }\quad E_+=\wedge\{F\in\E:E\subset F\}.
$$ 

\begin{lemma}\label{perpspan}
Let $E$ be in a nest $\E$. Then the closure of $\text{span}\{N^{\perp}:N\in\E,N_+\supset E\}$ in the weak*- topology coincides with $E^{\perp}$.
\end{lemma}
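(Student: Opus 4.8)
The plan is to prove the two inclusions separately, the easier one being that $\overline{\operatorname{span}}^{w^*}\{N^{\perp}:N\in\E,\ N_+\supset E\}\subseteq E^{\perp}$. For this, it suffices to note that each generator $N^{\perp}$ with $N_+\supset E$ is contained in $E^{\perp}$: indeed, $N_+\supset E$ forces $E\subseteq N$ (if we had $N\subset E$ then $N_+\subseteq E$, contradicting $N_+\supset E$, using that $\E$ is a chain), whence $N^{\perp}\subseteq E^{\perp}$. Since $E^{\perp}$ is weak*-closed, the closed span of these generators stays inside $E^{\perp}$.

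For the reverse inclusion, I would argue by contraposition using the bipolar/pre-annihilator machinery recalled in the preliminaries. Write $M=\operatorname{span}\{N^{\perp}:N\in\E,\ N_+\supset E\}$. By the stated facts, $\overline{M}^{w^*}=(M_{\perp})^{\perp}$, so it is enough to show $M_{\perp}\subseteq E$; then taking annihilators and using $(E)^{\perp}\subseteq (M_{\perp})^{\perp}=\overline{M}^{w^*}$ (here $E$ is already closed, so $(E^{\perp})_{\perp}=E$) gives the claim. So let $x\in M_{\perp}$, i.e.\ $f(x)=0$ for every $f\in N^{\perp}$ with $N_+\supset E$; equivalently, $x\in (N^{\perp})_{\perp}=N$ for every such $N$. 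Thus $x\in\wedge\{N\in\E:N_+\supset E\}$, and the task reduces to the purely lattice-theoretic identity
\[
\wedge\{N\in\E:N_+\supset E\}\subseteq E .
\]

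This last containment is the crux, and it is where the order structure of the nest does the work. The set $\{N\in\E:N_+\supset E\}$ consists exactly of those $N\in\E$ with $N_+\supsetneq E$; since $\E$ is a chain, for such $N$ one has either $E\subseteq N$ or $N\subset E$, and in the latter case $N\subseteq E_-\subseteq E$, so in all cases these $N$ satisfy $N\subseteq$ (something), but more to the point I want their meet to lie in $E$. The key observation is: if $F\in\E$ and $F\supset E$ strictly, then there exists $N\in\E$ with $N_+\supset E$ and $N\subseteq F$ — for instance one can take $N=E$ itself provided $E_+\supset E$, and handle the case $E_+=E$ separately. I expect the main obstacle to be precisely the borderline cases $E_+=E$ and $E_-=E$ (i.e.\ $E$ not having an immediate successor/predecessor in the nest, or $E$ being a limit from one side): here one must check directly that every $F\in\E$ with $F\supsetneq E$ dominates some member of the indexing set, so that the meet cannot exceed $E$. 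Once this case analysis is carried out, combining it with the bipolar argument above closes the proof.
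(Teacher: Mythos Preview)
Your approach is essentially the paper's: both arguments apply the bipolar/pre-annihilator identities to reduce the question to the lattice-theoretic fact
\[
\wedge\{N\in\E:\ N_+\supset E\}=E,
\]
and the paper in fact records this identity beforehand (see equation~\eqref{01} in the preliminaries) and then simply computes
\[
\overline{\text{span}}^{w^*}\{N^{\perp}:N_+\supset E\}
=\bigl(\cap\{N:N_+\supset E\}\bigr)^{\perp}=E^{\perp}
\]
in one line, rather than treating the two inclusions separately as you do.

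The only genuine gap in your write-up is that you stop short of establishing this identity, deferring it to an unspecified ``case analysis'' and flagging $E_-=E$ as a borderline case (it is not relevant here). The argument is short and does not require cases on $E_-$. First, if $N_+\supset E$ then $E\subseteq N$ (as you observed), so the meet is $\supseteq E$. For the reverse: if $E_+\supset E$, then $E$ itself satisfies $E_+\supset E$, hence lies in the indexing set and the meet is $\subseteq E$. If $E_+=E$ (this includes $E=X$), then for any $N\supset E$ we have $N_+\supseteq N\supset E$, so $\{N:N_+\supset E\}\supseteq\{N:N\supset E\}$, and therefore the meet is $\subseteq\wedge\{N:N\supset E\}=E_+=E$. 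Either way the identity holds, and your proof is then complete.
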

\begin{proof}
Let  
$\overline{\text{span}}^{w^*}\{N^{\perp}:N_+\supset E\}
$ be the closure in the weak*-topology of the subspace $\text{span}\{N^{\perp}:N\in\E,N_+\supset E\}$. Then 
$$\begin{aligned}
\overline{\text{span}}^{w^*}\{N^{\perp}:N_+\supset E\}=\left(\left(\cup\{N^{\perp}:N_+\supset E\}\right)_{\perp}\right)^{\perp}
&=\left(\cap\{(N^{\perp})_{\perp}:N_+\supset E\}\right)^{\perp}\\
&=\left(\cap\{N:N_+\supset E\}\right)^{\perp}=E^{\perp},
\end{aligned}
$$
as required.
\end{proof}

A nest $\E$ is said to be \emph{continuous} if $E_-=E$, for all $E\in\E$.
Observe that, if $E_-\subset E$, then  $E=(E_-)_+$, and that, if $E\subset E_+$, then $E=(E_+)_-$. It is easily seen that 
\begin{equation}\label{01}
E=\vee\{N_+:N\subset E\}=\vee\{N:N_-\subset E\}=\wedge\{N_-:E\subset N\}=\wedge\{N:E\subset N_+\}.
\end{equation}

A rank-$1$ operator $f\otimes w$ on $X$ is defined, by means of a bounded linear functional $f\in X^*$ and a vector $w\in X$, as  $f\otimes w(x)=f(x)w$.

 The next lemma is essentially  \cite[Lemma 3.3]{Ringrose}.

\begin{lemma}\label{rankonenestalg}
Let $\mathcal E$ be a  nest in a complex Banach space $X$ and let  $f\otimes w$ be a rank-$1$ operator in $\B(X)$. The following assertions are equivalent. 
\begin{itemize}
\item[(i)] The rank-$1$ operator $f\otimes w$ lies in $\mathcal T(\mathcal E)$.

\item[(ii)] There exists $E\in\mathcal E$ such that $w\in E$ and $f\in (E_-)^{\perp}$.

\item[(iii)] There exists $E\in\mathcal E$ such that $w\in E_+$ and $f\in E^{\perp}$.
\end{itemize}
\end{lemma}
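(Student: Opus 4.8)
The plan is to first dispose of the trivial case $w=0$, in which $f\otimes w$ is the zero operator and all three conditions hold (taking $E=\{0\}$), and henceforth assume $w\neq 0$. The key elementary observation is that, for a fixed $E\in\E$, the subspace $E$ is invariant under $f\otimes w$ if and only if $w\in E$ or $f\in E^{\perp}$: indeed $(f\otimes w)(E)=f(E)\,w$, and the scalar set $f(E)$ is either $\{0\}$, which happens precisely when $f\in E^{\perp}$, or all of $\mathbb C$, in which case $(f\otimes w)(E)=\mathbb C w$ is contained in $E$ exactly when $w\in E$.

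For (i) $\Rightarrow$ (ii), I would set $E_{0}=\wedge\{N\in\E:w\in N\}$. Since $\E$ is a subspace lattice this meet lies in $\E$, and since each member of the family contains $w$ so does their intersection; thus $E_{0}$ is the smallest element of $\E$ containing $w$, and in particular $E_0\neq\{0\}$. For every $N\in\E$ with $N\subset E_{0}$ we have $w\notin N$, so the invariance of $N$ under $f\otimes w$ forces $f\in N^{\perp}$ by the observation above. Hence
$$
f\in\bigcap\{N^{\perp}:N\in\E,\ N\subset E_{0}\}=\Big(\vee\{N\in\E:N\subset E_{0}\}\Big)^{\perp}=\big((E_{0})_{-}\big)^{\perp},
$$
the middle equality being the standard fact that the annihilator of a join of closed subspaces is the intersection of their annihilators (compare the computation in Lemma \ref{perpspan}). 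So (ii) holds with $E=E_{0}$.

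For (ii) $\Rightarrow$ (i), pick $E$ as in (ii) and let $N\in\E$ be arbitrary. Because $\E$ is a chain, either $E\subseteq N$, in which case $w\in E\subseteq N$ and $N$ is invariant, or $N\subset E$, in which case $N\subseteq\vee\{F\in\E:F\subset E\}=E_{-}$, so $f\in (E_{-})^{\perp}\subseteq N^{\perp}$ and $(f\otimes w)(N)=\{0\}\subseteq N$; thus $f\otimes w\in\T(\E)$. Finally, (ii) $\Leftrightarrow$ (iii) is a short manipulation with the maps $E\mapsto E_{-}$ and $E\mapsto E_{+}$, using the identities $(E_{-})_{+}=E$ when $E_{-}\subset E$ and $(E_{+})_{-}=E$ when $E\subset E_{+}$ recorded before the statement: if (ii) holds for $E$, then $E'=E_{-}$ satisfies $f\in (E')^{\perp}$ and $w\in E\subseteq(E_{-})_{+}=(E')_{+}$ (the last equality being trivially true also when $E_{-}=E$), giving (iii); conversely, if (iii) holds for $E$, then $E''=E_{+}$ satisfies $w\in E''$ and $f\in E^{\perp}\subseteq\big((E_{+})_{-}\big)^{\perp}=\big((E'')_{-}\big)^{\perp}$, giving (ii).

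The only mildly delicate point is the identity $\bigcap\{N^{\perp}:N\subset E_{0}\}=((E_{0})_{-})^{\perp}$ in the second paragraph, which is just the annihilator–join duality (together with $(S^{\perp})_{\perp}=[S]$ and the behaviour of annihilators under unions), and keeping track of the degenerate situation in which $E_{0}$ is an atom over $\{0\}$, where both sides equal $X^{*}$. Everything else is routine bookkeeping with the chain property of $\E$.
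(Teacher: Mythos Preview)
Your proof is correct. The paper does not actually supply its own proof of this lemma---it simply cites Ringrose \cite[Lemma~3.3]{Ringrose}---and your argument is essentially the standard one: identify the smallest nest element $E_0$ containing $w$, use the chain property to force $f$ to annihilate every $N\subset E_0$ (hence $f\in((E_0)_-)^\perp$), and then pass between (ii) and (iii) by shifting from $E$ to $E_-$ or $E_+$ using the identities recorded just before the statement.
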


Finite rank operators  play a decisive role in the theory of Hilbert space nest algebras and  have been thoroughly investigated with respect to their density and decomposability (e.g.,  \cite{E, LL, MO, O3, O2}). In fact, they have shown to be  crucial in the characterisation of associative, Jordan and Lie  modules of nest algebras on Hilbert space (e.g., \cite{LiLi, LiLu, O2, OS}).  Still, more recently, they  have  been given a prominent place in the  related context of triangularizability (e.g., \cite{Drn}).

 Finite rank operators have also a relevant place both in the theory  of Banach space nest algebras and  in  the present work.  Similarly to what happens in the Hilbert space setting, each finite rank operator in a Banach space nest algebra $\T(\E)$ is decomposable into a finite sum of rank-$1$ operators in $\T(\E)$ (\cite[Theorem 3.1]{LiLu}). Moreover,  the rank-$1$ operators are strongly dense in $\T(\E)$.

\begin{theorem}[\cite{spadounakis},Theorem 3]\label{rankstrong}
Let $\E$ be a nest of subspaces of a Banach space $X$ and let $\mathcal T(\E)_0$ be the set of finite rank operators in the nest algebra $\T(\E)$. Then the closure of $\mathcal T(\E)_0$ in  the strong operator topology coincides with $\mathcal T(\E)$.
\end{theorem}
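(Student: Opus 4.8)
The plan is to establish the nontrivial inclusion $\T(\E)\subseteq\overline{\mathcal T(\E)_0}^{\,\text{SOT}}$; the reverse one is immediate since $\T(\E)$ is weakly, hence strongly, operator closed and contains $\mathcal T(\E)_0$. Fix $T\in\T(\E)$, vectors $x_1,\dots,x_n$ and $\varepsilon>0$, and set $W=\span\{x_1,\dots,x_n\}$. First I would reduce to the identity operator: it suffices to produce, for each finite-dimensional subspace $W\subseteq X$ and each $\delta>0$, a finite rank $Q\in\T(\E)$ with $\|Qx-x\|\le\delta\|x\|$ for all $x\in W$. Indeed $F:=TQ$ is then finite rank, belongs to $\T(\E)$ because $\T(\E)$ is an algebra, and satisfies $\|Fx_i-Tx_i\|\le\|T\|\,\delta\,\|x_i\|$, which is below $\varepsilon$ once $\delta$ is small.

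To build $Q$ I would organise $W$ along the nest. As $\E$ is a chain and $W$ is finite-dimensional, the subspaces $N\cap W$ ($N\in\E$) take only finitely many values (the argument in the proof of Lemma \ref{finitesubspacenotzero}), and so form a flag $\{0\}=W_0\subsetneq W_1\subsetneq\dots\subsetneq W_m=W$. For $1\le j\le m$ put $L_j=\wedge\{N\in\E:N\cap W\supseteq W_j\}$, which lies in $\E$; intersecting with $W$ (the meet being the intersection, and the family intersected being finite with least member $W_j$) gives $L_j\cap W=W_j$, and the $L_j$ increase in $j$. The structural fact to isolate is that \emph{$N_-\subsetneq L_j$ forces $N_-\cap W\subseteq W_{j-1}$}: otherwise $N_-\cap W\supseteq W_j$, and since $N_-\in\E$ the definition of $L_j$ gives $L_j\subseteq N_-$, contradicting $N_-\subsetneq L_j$. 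Lastly, fix a basis of $W$ subordinate to the flag, namely disjoint sets $B_1,\dots,B_m$ with $\span(B_1\cup\dots\cup B_j)=W_j$, and for $v\in B_j$ write $\widehat W_v$ for the span of the other basis vectors, so that $W_{j-1}\subseteq\widehat W_v$ whereas $v\notin\widehat W_v$.

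Now, given $\delta>0$, let $C=\sum_v\|v^{*}\|_{W^{*}}$, where $v^{*}\in W^{*}$ is the coordinate functional dual to $v$. For $v\in B_j$, equation \eqref{01} gives $L_j=\vee\{N\in\E:N_-\subsetneq L_j\}$, and since this join runs over a chain it is simply the closure of the union; as $v\in L_j$, there are thus $N_v\in\E$ with $(N_v)_-\subsetneq L_j$ and $w_v\in N_v$ with $\|w_v-v\|<\delta/C$. The key observation is that $v\notin(N_v)_-+\widehat W_v$: a relation $v=a+b$ with $a\in(N_v)_-$ and $b\in\widehat W_v\subseteq W$ would give $a=v-b\in(N_v)_-\cap W\subseteq W_{j-1}\subseteq\widehat W_v$, hence $v\in\widehat W_v$, which is false. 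Since $\widehat W_v$ is finite-dimensional, $(N_v)_-+\widehat W_v$ is closed, so Hahn--Banach furnishes $f_v\in X^{*}$ vanishing on $(N_v)_-+\widehat W_v$ with $f_v(v)=1$. By Lemma \ref{rankonenestalg}(ii), applied with $E=N_v$, each $f_v\otimes w_v$ lies in $\T(\E)$, and hence so does the finite rank operator $Q=\sum_{j=1}^m\sum_{v\in B_j}f_v\otimes w_v$. For $x\in W$ one has $f_v(x)=v^{*}(x)$ (as $f_v$ kills $\widehat W_v$ and $f_v(v)=1$), so $Qx-x=\sum_v v^{*}(x)(w_v-v)$ and $\|Qx-x\|\le\sum_v|v^{*}(x)|\,\|w_v-v\|\le(\delta/C)\sum_v\|v^{*}\|_{W^{*}}\,\|x\|=\delta\|x\|$, which finishes the argument.

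The step I expect to be the real obstacle is exactly this descent from $L_j$ to the strictly smaller $N_v$, together with the distance estimate. Over a Hilbert space one could take $E=L_j$ directly and use coordinate functionals annihilating $(L_j)_-$; over a Banach space, however, there need be no least element of $\E$ containing a given $v\in W$, and for a continuous nest $(L_j)_-$ may already contain $W_j$, so exact agreement with the identity on $W$ is unattainable. Equation \eqref{01} is precisely what lets one slip below $L_j$ at the cost of an arbitrarily small perturbation, and the short computation $v\notin(N_v)_-+\widehat W_v$ is what keeps Hahn--Banach applicable afterwards; lining up the flag and the subordinate basis correctly is the remaining delicate bit of bookkeeping, but it is routine once the scheme is in place.
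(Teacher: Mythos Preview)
The paper does not prove Theorem~\ref{rankstrong}; it is quoted from \cite{spadounakis} as a known result, so there is no in-paper argument to compare against. Your proof is correct and self-contained. The reduction to approximating the identity on a finite-dimensional $W$, the flag $\{0\}=W_0\subsetneq\cdots\subsetneq W_m=W$ induced by $\E$ on $W$, the definition $L_j=\wedge\{N\in\E:N\cap W\supseteq W_j\}$ together with the verification $L_j\cap W=W_j$, the structural fact that $N_-\subsetneq L_j$ forces $N_-\cap W\subseteq W_{j-1}$, the approximation $w_v\in N_v$ extracted from $L_j=\vee\{N\in\E:N_-\subsetneq L_j\}$ (the identity from the introduction), and the Hahn--Banach step giving $f_v\in((N_v)_-)^\perp$ with $f_v|_W=v^*$ all go through exactly as you describe; Lemma~\ref{rankonenestalg}(ii) then places each $f_v\otimes w_v$ in $\T(\E)$ and the norm estimate is straightforward.

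This is essentially the standard route to the result in the Banach-space setting, and the point you single out as the real obstacle is indeed the crux: in the absence of orthogonal projections one cannot hit $W$ exactly, so one descends from $L_j$ to some $N_v$ with $(N_v)_-\subsetneq L_j$ at the cost of an arbitrarily small perturbation, and the short computation $v\notin (N_v)_-+\widehat W_v$ is what rescues the Hahn--Banach step afterwards.
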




%

\section{Support functions}\label{s_bim}

A  subspace $\J$ of $\mathcal B(X)$ is said to be a \emph{$\mathcal T(\mathcal E)$-bimodule} if $\J\mathcal T(\mathcal E),\mathcal T(\mathcal E)\J\subseteq \J$. In what follows, for simplicity, $\mathcal T(\mathcal E)$-bimodules may be referred to as bimodules. Following \cite{DDH}, we shall establish a correspondence between bimodules and support functions.
 A \emph{support function} on a nest $\E$ is an order preserving map $\Phi:\E\to\E$, i.e., a map such that  $\Phi(N)\subseteq \Phi(M)$ whenever  $N\subseteq M$. A support function $\Phi$ is called an \emph{admissible support function} if in addition $\Phi$ is \emph{left continuous}, that is, for every $N\in\mathcal E\backslash \{\{0\}\}$ we have
  $$
  \vee_{E\subset N}\Phi(E)=\Phi(N_-).
  $$

Support functions on $\E$ form a partially ordered set.   Let $\Phi_1$ and $\Phi_2$ be support functions. We write $\Phi_1\le\Phi_2$ if, for all $E\in\E$,  $\Phi_1(E)\subseteq\Phi_2(E)$. 

Given a support function $\Phi$ on $\E$, we define $\Phi_-$ to be the greatest admissible support function $\Phi_-$ such that $\Phi_-\le\Phi$. It can be seen that $\Phi_-$ is given by $\Phi_-(\{0\})=\Phi(\{0\})$ and, for $E\neq\{0\}$, 
$$
\displaystyle{\Phi_-(E)=\vee_{F_-\subset E}\Phi(F)=\left\{
\begin{array}{ll}
\Phi(E) & \text{if }E_-\subset E\\
\displaystyle{\vee_{F\subset E}\, \Phi(F)} & \text{if }E_-= E
\end{array}\right.}
$$

Given a $\T(\E)$-bimodule $\mathcal J$, define the function $\Phi_{\mathcal J}$ on $\E$ by
$$
\begin{aligned}
\Phi_{\mathcal J}&:\mathcal E\longrightarrow\mathcal E \\ 
&E\mapsto [\mathcal JE].
\end{aligned}
$$

\begin{proposition}\label{p_phiJ}
Let $\E$ be a nest, let $\T(\E)$ be the corresponding nest algebra and let $\J$ be a 
$\T(\E)$-bimodule. Then $\supp$ is an admissible support function on $\E$.
\end{proposition}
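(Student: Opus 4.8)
The plan is to verify directly that $\supp(E) = [\J E]$ satisfies the two defining properties of an admissible support function: it is order preserving, and it is left continuous. I would begin with a preliminary observation that $\supp(E) \in \E$ for every $E$, which is where the bimodule structure first enters: since $\J\T(\E) \subseteq \J$, and since $\T(\E)$ contains all rank-one operators $f \otimes w$ with $w \in N_+$ and $f \in N^\perp$ for $N \in \E$ (Lemma \ref{rankonenestalg}), one can show $[\J E]$ is invariant under the appropriate operators and hence lies in the reflexive lattice $\E$; alternatively, since $\E$ is a nest and $[\J E]$ is a closed subspace, the key point is to rule out $[\J E]$ lying strictly between consecutive elements — I expect the cleanest route is to note that $\T(\E)[\J E] \subseteq [\T(\E)\J E] \subseteq [\J E]$, so $[\J E] \in \operatorname{Lat}\T(\E) = \operatorname{Lat}\operatorname{Alg}(\E) = \E$ by reflexivity of the nest.

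Order preservation is then immediate: if $N \subseteq M$ then $\J N \subseteq \J M$, so $[\J N] \subseteq [\J M]$, i.e.\ $\supp(N) \subseteq \supp(M)$. The substantive part is left continuity: for $N \neq \{0\}$ I must show $\vee_{E \subset N}\supp(E) = \supp(N_-)$. The inclusion $\vee_{E \subset N}[\J E] \subseteq [\J N_-]$ follows from order preservation together with $E \subseteq N_-$ for all $E \subset N$ (by the definition $N_- = \vee\{F : F \subset N\}$ and the fact that $\supp$ sends a join into... here one needs $[\J(\vee_i E_i)] = \vee_i [\J E_i]$, or at least $\supseteq$ the join, which holds since each $[\J E_i] \subseteq [\J N_-]$). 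For the reverse inclusion $[\J N_-] \subseteq \vee_{E \subset N}[\J E]$: writing $N_- = \overline{\span}(\cup_{E \subset N} E)$, any $y = Tx$ with $T \in \J$ and $x \in N_-$ can be approximated by $T$ applied to vectors in $\span(\cup_{E\subset N}E)$, hence $\J N_- \subseteq \overline{\span}(\cup_{E \subset N}\J E)$, and taking closed spans gives $[\J N_-] \subseteq \vee_{E \subset N}[\J E]$.

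The main obstacle, and the step I would spend the most care on, is justifying that $[\J(\vee_i M_i)]$ and $\vee_i[\J M_i]$ coincide — specifically the continuity of $M \mapsto [\J M]$ along increasing joins — because this is exactly what powers both directions of the left-continuity identity and it is not purely formal: it rests on the density argument that $\J(\overline{\span} \cup_i M_i) \subseteq \overline{\span}\, \J(\cup_i M_i)$, which uses boundedness/continuity of the operators in $\J$. Once that lemma-like fact is in hand (it could be isolated or simply inlined), plugging in $\vee_i M_i = N_-$ with $\{M_i\} = \{E \in \E : E \subset N\}$ closes the argument. I would also double-check the edge case: the definition of admissibility only demands the condition for $N \in \E \setminus \{\{0\}\}$, and for $N$ with $N_- = N$ (a point of continuity of the nest) the identity reads $\vee_{E \subset N}[\J E] = [\J N]$, which is precisely the join-continuity statement applied with supremum $N$ itself — so no separate treatment is needed.
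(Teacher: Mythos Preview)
Your proposal is correct and follows essentially the same approach as the paper: well-definedness via $\T(\E)[\J E]\subseteq[\J E]$ and reflexivity of $\E$, order preservation immediate, and left continuity from continuity of the operators in $\J$. The only cosmetic difference is that for the non-trivial inclusion $\Phi_\J(N_-)\subseteq\vee_{E\subset N}\Phi_\J(E)$ the paper argues dually via annihilators (if $f(TE)=\{0\}$ for all $E\subset N$ then $f(TN_-)=\{0\}$, hence the annihilators compare the right way), whereas you run the same continuity argument directly on the vectors; the two are equivalent.
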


\begin{proof}
Observe that $\supp$ is well-defined. In fact, since $\J$ is a bimodule,  we have, for all $E\in\E$, 
$$
\mathcal T(\E)[\J E]=[\mathcal T(\E)\J E]\subseteq [\J E].
$$
  By the reflexivity of  $\E$,  it  follows that $[\J E]\in\E$. 
 
 Clearly, $\supp$ is order preserving. Hence,  $\supp$ is a support function on $\E$.
It remains to show  that $\Phi_\J$ is left continuous. 

Let $N\in\E\backslash\{\{0\}\}$. 
We only show that 
$
\vee_{E\subset N}\, \Phi_\J(E)\supseteq\Phi_\J(N_-),
$ 
since the reverse inclusion is obvious. 
 Let  $f$ be a functional in  $X^*$ and  let $T$ be an operator in $\mathcal J$. By the continuity of $f$,    if $f(TE)=\{0\}$ for every $E\subset N$, then $f(TN_-)=\{0\}$. 
 It follows that, if  $f(\mathcal JE)=\{0\}$ for every $E\subset N$, then   $f(\mathcal JN_-)=\{0\}$.  Hence, 
$$
\left(\vee_{E\subset N}\, \Phi_\J(E)\right)^{\perp}\subseteq(\Phi_\J(N_-))^{\perp},
$$
 from which follows that 
 $$
\vee_{E\subset N}\, \Phi_\J(E)=(\left(\vee_{E\subset N}\, \Phi_\J(E)\right)^{\perp})_\perp\supseteq((\Phi_\J(N_-))^{\perp})_\perp=\Phi_\J(N_-),
$$
 as required.
\end{proof}

Given a support function $\Phi$ on a nest $\mathcal E$,  let $\mathcal M(\Phi)$ be the  $\T(\E)$-bimodule
 defined by
\begin{equation}\label{01}
\mathcal M(\Phi)=\{T\in\mathcal B(X):TE\subseteq \Phi(E)\quad\forall E\in\mathcal E\}.
\end{equation}

Recall that a subspace $A$ of $\mathcal B(X)$ is \emph{reflexive} if it coincides with its \emph{reflexive cover}
$$
\text{Ref }A=\{T\in\mathcal B(X):Tx\in[Ax], \, \forall x\in X\}.
$$

The next proposition characterises the reflexive bimodules in terms of support functions. For a proof, see, for example,  [\cite{LiLi}, Proposition 2.6].

\begin{proposition}\label{p_reflexbim} Let $\E$ be a nest, let $\T(\E)$ be the corresponding nest algebra and let $\J$ be a 
$\T(\E)$-bimodule.  Then $\J$ is reflexive if and only if $\mathcal J=\mathcal M(\Phi_{\mathcal J})$. 
\end{proposition}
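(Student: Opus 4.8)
The statement to prove is Proposition \ref{p_reflexbim}: a $\T(\E)$-bimodule $\J$ is reflexive if and only if $\J = \M(\Phi_\J)$.

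The plan is to chase the two definitions against each other. The key observation is that both $\operatorname{Ref}\J$ and $\M(\Phi_\J)$ are described by "pointwise" conditions, and the nest $\E$ being reflexive ($\operatorname{Lat}\operatorname{Alg}\E = \E$) is what lets us translate between a condition of the form $Tx \in [\J x]$ for all $x$ and a condition of the form $TE \subseteq [\J E]$ for all $E \in \E$. First I would unwind $\operatorname{Ref}\J = \{T : Tx \in [\J x]\ \forall x \in X\}$ and $\M(\Phi_\J) = \{T : TE \subseteq [\J E]\ \forall E \in \E\}$, and show the core claim
$$
\operatorname{Ref}\J = \M(\Phi_\J).
$$
Granting this, the proposition is immediate: $\J$ is reflexive means $\J = \operatorname{Ref}\J$, which by the claim is exactly $\J = \M(\Phi_\J)$.

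For the inclusion $\operatorname{Ref}\J \subseteq \M(\Phi_\J)$: take $T \in \operatorname{Ref}\J$ and $E \in \E$. For each $x \in E$ we have $Tx \in [\J x] \subseteq [\J E] = \Phi_\J(E)$, and since $\Phi_\J(E)$ is a closed subspace this gives $TE \subseteq \Phi_\J(E)$; hence $T \in \M(\Phi_\J)$. For the reverse inclusion $\M(\Phi_\J) \subseteq \operatorname{Ref}\J$: take $T \in \M(\Phi_\J)$ and $x \in X$. We want $Tx \in [\J x]$. The natural route is to work with the closed subspace $[\J x]$; one checks that $T$ leaves $[\J x]$ invariant by reducing to showing $[\J x] \in \E$, or more directly, that $[\J x] \supseteq \Phi_\J(E)$ fails to be an obstruction because $x$ itself can be captured by a suitable $E \in \E$. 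Actually the cleanest argument: note that $[\J x]$ need not be in $\E$, so instead I would show directly that any functional $f \in X^*$ annihilating $\J x$ also annihilates $Tx$. If $f(\J x) = \{0\}$, consider the smallest $E \in \E$ with a prescribed relationship to $x$; since $\J$ is a bimodule and $\E$ reflexive one argues that $x$ lies in some $E$ with $TE \subseteq [\J E]$ and $f$ annihilates $[\J E]$ — but this requires care, and indeed this direction is where one genuinely uses that $\J$ is a bimodule and not merely a subspace.

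The main obstacle is precisely this second inclusion: controlling $Tx$ for an \emph{arbitrary} vector $x$, not one lying in an element of the nest. The resolution should invoke the bimodule property together with reflexivity of $\E$ as follows — given $x$ and $f \in (\J x)^\perp$, let $E = \wedge\{N \in \E : x \in N\}$ (or the relevant join/meet from \eqref{01}); one shows $[\J E]$ and $[\J x]$ have the same annihilator behaviour relative to $f$ because every operator in $\J$ applied to $E$ can be approximated, via $\T(\E)$-multiplication, by operators sending into $[\J x]$. Since the excerpt explicitly offers Proposition \ref{p_reflexbim} with a reference to \cite{LiLi}, I would in practice cite that, but the self-contained argument is the annihilator computation just sketched, mirroring the left-continuity proof in Proposition \ref{p_phiJ}. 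I expect the write-up to be short once the claim $\operatorname{Ref}\J = \M(\Phi_\J)$ is isolated; everything else is routine manipulation of closures and annihilators.
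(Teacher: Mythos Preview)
The paper does not give its own proof of this proposition; it simply refers to \cite[Proposition 2.6]{LiLi}. So there is no in-paper argument to compare against, and your plan to isolate the equality $\operatorname{Ref}\J=\M(\Phi_\J)$ is the right one. Your forward inclusion $\operatorname{Ref}\J\subseteq\M(\Phi_\J)$ is clean and correct.

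The reverse inclusion, however, is where your sketch drifts. Two points. First, your claim that ``$[\J x]$ need not be in $\E$'' is wrong: since $\T(\E)\J\subseteq\J$, the subspace $[\J x]$ is $\T(\E)$-invariant, and reflexivity of $\E$ forces $[\J x]\in\E$. Second, the annihilator argument you start to outline is an unnecessary detour; the bimodule hypothesis on the \emph{right} gives you the missing step directly. Set $E_x=[\T(\E)x]$. This lies in $\E$ (same reasoning: it is $\T(\E)$-invariant), contains $x$, and in fact equals $\wedge\{N\in\E:x\in N\}$. Now use $\J\T(\E)\subseteq\J$: one has $\J E_x=\J[\T(\E)x]\subseteq[\J\T(\E)x]\subseteq[\J x]$, whence $\Phi_\J(E_x)=[\J E_x]\subseteq[\J x]$. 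The reverse inclusion $[\J x]\subseteq[\J E_x]$ is trivial since $x\in E_x$, so $\Phi_\J(E_x)=[\J x]$. For $T\in\M(\Phi_\J)$ this gives $Tx\in TE_x\subseteq\Phi_\J(E_x)=[\J x]$, and you are done.

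So your strategy is sound, but the write-up of the second inclusion needs to be replaced by the two-line computation above rather than the annihilator approximation you sketch.
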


It follows from this proposition that the mapping $\Phi\mapsto \mathcal M(\Phi)$, from the set of admissible
 support functions to the set of reflexive bimodules, is surjective. However, this mapping is not injective as, for example, changing the image of $\{0\}$ does not change the associated bimodule. Nevertheless, assuming left continuity, we can prove injectivity up to the image of $\{0\}$. To the best of our knowledge, this is new even in the Hilbert space setting, where the counterpart of the next proposition has been proved for continuous nests only (cf. \cite{ErdosPower}, pp.223,224).

\begin{proposition}\label{corrinj0}
Let $\E$  be a nest in $\B(X)$ and let $\Phi, \Theta$ be  admissible support functions on $\E$ such that $\mathcal M(\Phi)=\mathcal M(\Theta)$, where $\mathcal M(\Phi)$ and  $\mathcal M(\Theta)$ are the bimodules associated with, respectively,  $\Phi$ and  $\Theta$,  defined in \eqref{01}. Then, for all $E\in\E\backslash\{\{0\}\}$,  $\Phi(E)=\Theta(E)$.
\end{proposition}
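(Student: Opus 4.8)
The plan is to show that if $\Phi$ and $\Theta$ disagree at some $E\neq\{0\}$, then the bimodules $\mathcal M(\Phi)$ and $\mathcal M(\Theta)$ must differ, which is the contrapositive. Without loss of generality suppose there is $E\in\E\setminus\{\{0\}\}$ with $\Phi(E)\not\subseteq\Theta(E)$. The goal is then to produce an operator $T\in\mathcal M(\Phi)\setminus\mathcal M(\Theta)$. The natural candidate is a rank-$1$ operator $f\otimes w$ with $w\in\Phi(E)\setminus\Theta(E)$ and $f$ chosen so that $f\otimes w$ lands inside $\mathcal M(\Phi)$. For $f\otimes w$ to lie in $\mathcal M(\Phi)$ we need $f\otimes w(N)\subseteq\Phi(N)$ for all $N\in\E$, i.e. whenever $f(N)\neq\{0\}$ we need $w\in\Phi(N)$; since $\E$ is a chain, this amounts to requiring $f$ to annihilate every $N\in\E$ with $w\notin\Phi(N)$, equivalently $f\in K^\perp$ where $K=\vee\{N\in\E: w\notin\Phi(N)\}$.

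First I would examine this subspace $K$. Because $w\in\Phi(E)$ and $w\notin\Theta(E)$, while $\Theta\ge$ (the admissible support functions agree off $\{0\}$ only if the proposition holds), I want to locate where $w$ "enters" the image. Set $K=\vee\{N\in\E:w\notin\Phi(N)\}$; then $E\not\subseteq K$ would contradict nothing, but the key point is to compare $K$ with $E$. In fact for every $N$ with $E\subseteq N$ we have $\Phi(E)\subseteq\Phi(N)$, so $w\in\Phi(N)$, hence every such $N$ is excluded from the defining family of $K$; this forces $K\subseteq E$ and moreover $K\subseteq E_-$ when $E_-\subset E$, while if $E_-=E$ left continuity of $\Phi$ gives $\Phi(E)=\vee_{F\subset E}\Phi(F)$, and one argues (using that $w$ is a single vector and the closure in \eqref{01}) that $w\in\Phi(F)$ for some $F\subset E$ — a delicate point, since membership in a closed span need not be witnessed by a single smaller subspace. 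I expect this is exactly where the left-continuity hypothesis and a limiting/annihilator argument à la Lemma \ref{perpspan} must be combined. In the favourable case one gets $K=E_-\subset E$ (or $K\subseteq F\subset E$), so that $f\in K^\perp$ can be chosen with $f(E)\neq\{0\}$ by Hahn–Banach, since $K\subsetneq E$.

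Then, by Lemma \ref{rankonenestalg} one checks $f\otimes w\in\mathcal T(\E)\subseteq$ nothing is needed; rather one checks directly that $f\otimes w\in\mathcal M(\Phi)$: for $N\in\E$, if $w\in\Phi(N)$ then $f\otimes w(N)\subseteq\Phi(N)$ trivially, and if $w\notin\Phi(N)$ then $N\subseteq K$ so $f(N)=\{0\}$ and $f\otimes w(N)=\{0\}\subseteq\Phi(N)$. On the other hand $f\otimes w\notin\mathcal M(\Theta)$: since $f(E)\neq\{0\}$ pick $x\in E$ with $f(x)\neq 0$, so $f\otimes w(x)=f(x)w$ is a nonzero multiple of $w$, which does not lie in $\Theta(E)$ because $w\notin\Theta(E)$ and $\Theta(E)$ is a subspace. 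This contradicts $\mathcal M(\Phi)=\mathcal M(\Theta)$, completing the argument.

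The main obstacle I anticipate is the step showing $K\subsetneq E$, i.e. that $w\in\Phi(E)$ together with left continuity really localises $w$ strictly below $E$ (or at $E_-\subsetneq E$). When $E_-=E$, $\Phi(E)$ is a closure of an increasing union $\overline{\operatorname{span}}\bigcup_{F\subset E}\Phi(F)$, and a vector in it need not belong to any single $\Phi(F)$; one must instead pass to annihilators — $\Phi(E)^\perp=\bigcap_{F\subset E}\Phi(F)^\perp$ — and argue that $f\otimes w\in\mathcal M(\Phi)$ can still be arranged by choosing $f$ in $\Phi(E)^\perp$-adjacent data, or work with $\Phi_-$ versus $\Phi$ and exploit admissibility of both $\Phi$ and $\Theta$ to reduce to the case $E_-\subsetneq E$. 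Handling this continuous-type case correctly, without the Hilbert space crutch of orthogonal projections, is the technical heart of the proof.
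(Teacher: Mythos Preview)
Your argument is fine when $E_-\subset E$; there it matches the paper's proof almost verbatim. The genuine gap is exactly the one you flag in the continuous case $E_-=E$, and it is not a minor technicality: your chosen $w\in\Phi(E)\setminus\Theta(E)$ may well fail to lie in any $\Phi(F)$ for $F\subset E$, since $\Phi(E)=\overline{\bigcup_{F\subset E}\Phi(F)}$ is only a closure. In that situation every $F\subset E$ satisfies $w\notin\Phi(F)$, so $K=\vee\{N:w\notin\Phi(N)\}\supseteq E_-=E$, hence $K=E$, and there is no $f\in K^\perp$ with $f(E)\neq\{0\}$. The annihilator identity $\Phi(E)^\perp=\bigcap_{F\subset E}\Phi(F)^\perp$ you mention does not rescue this, because the problem is on the domain side (finding $f$), not the range side.

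The paper avoids this obstruction by \emph{not} trying to exhibit a single rank-$1$ witness for a fixed $w\in\Phi(E)\setminus\Theta(E)$. Instead, for each $F\subset E$ and each $x\in\Phi(F)$, it picks $f\in F^\perp\setminus E^\perp$ (which exists since $F\subsetneq E$) and checks directly that $f\otimes x\in\mathcal M(\Phi)=\mathcal M(\Theta)$; since $f(E)\neq\{0\}$ this forces $x\in\Theta(E)$. Thus $\Phi(F)\subseteq\Theta(E)$ for every $F\subset E$, and only \emph{then} is left continuity invoked to pass to the join: $\Phi(E)=\vee_{F\subset E}\Phi(F)\subseteq\Theta(E)$. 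The point is that left continuity is used on the conclusion side (inclusion of subspaces) rather than to localise a single vector $w$. Restructuring your argument along these lines closes the gap.
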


\begin{proof}
Let $E\in\E$ and  
suppose firstly that $E=E_-\neq\{0\}$. Let $F$ be any subspace of the nest such that $F\subset E$. Let $x\in\Phi(F)$ and $f\in F^{\perp}\backslash E^{\perp}$. We show next that  $f\otimes x \in \mathcal M(\Phi)=\mathcal M(\Theta)$. 

Let $N\in \E$. If  $N\subset F$, then 
$$
f\otimes x(N)=f(N)x=\{0\}\subseteq \Phi(N).
$$
If, on the other hand, $N$ is such that $F\subseteq N$, then 
$$
f\otimes x(N)\subseteq\text{span}\{x\}\subseteq\Phi(F)\subseteq\Phi(N),
$$
 from which follows that 
  $f\otimes x\in\mathcal M(\Phi)$. 
  Since $f\otimes x\in\mathcal M(\Theta)$, we have that $f\otimes x(E)\subseteq \Theta(E)$. 
  Moreover, since $f$ was chosen not to lie in $E^{\perp}$,
   we have that $f(E)\neq\{0\}$ and, consequently, $x\in\Theta(E).$ 
   Notice that this holds for every $x\in\Phi(F)$. Hence $\Phi(F)\subseteq\Theta(E)$, for every $F\subset E.$ Now, using the left continuity of $\Phi$, we have that
$$
\Phi(E)=\Phi(E_-)=\vee\{\Phi(F):F\subset E\}\subseteq\Theta(E).
$$
Similarly, it can be shown   that $\Theta(E)\subseteq \Phi(E)$, and we have finally that  $\Phi(E)=\Theta(E).$

Suppose now that $E_-\subset E$. Let $x\in \Phi(E)$   and let $f\in E_-^{\perp}\backslash E^{\perp}$. Then $f\otimes x\in\mathcal M(\Phi)$. 
 In fact, if $N\subset E$, then $N\subseteq E_-$. Hence,  
 $$
 f\otimes x(N)=f(N)x\subseteq f(E_-)x=\{0\}\subseteq\Phi(N).
 $$
  If, on the other hand, $E\subseteq N$, then 
  $$
  f\otimes x(N)\subseteq\text{span}\{x\}\subseteq\Phi(E)\subseteq\Phi(N).
  $$
   Since $f\otimes x\in\mathcal M(\Theta)$, we have that 
   $$
   f\otimes x(E)=f(E)x\subseteq\Theta(E).
   $$
 Since $f\not\in E^{\perp}$, it follows that $x\in\Theta(E)$. Keeping in mind that $x\in\Phi(E)$ was arbitrarily chosen, we have  $\Phi(E)\subseteq\Theta(E)$. 
  It can be shown similarly that 
 $\Theta(E)\subseteq \Phi(E)$. Hence $\Phi(E)=\Theta(E)$, as required.
\end{proof}

\begin{remark}\label{rem1}Observe that the image of $\{0\}$ under a support function $\Phi$ is of no consequence regarding  the bimodule associated with $\Phi$. Hence, we see that there exists a bijection between reflexive bimodules and admissible support functions fixing $\{0\}$. 
\end{remark}

\begin{example}\label{exa1} The bijection referred to in Remark \ref{rem1}  no longer exists, if we consider general support functions.

Let $X=L^2([0,1])$ and let $\E$ be the continuous nest $\E=\{N_t:t\in [0,1]\}$ with 
$$N_t=\{f\in L^2([0,1])\colon f(s)=0\text{ a.e. on } [t,1]\}.
$$ 
Consider the support functions $\Phi$ and $\Psi$ on $\E$ given by 
$$
\Phi(N_t)=\begin{cases}
N_t &   0\le t<\frac 12 \\
 X & \frac 12\le t\le 1
\end{cases}\,,
\qquad \qquad
\Psi(N_t)=\begin{cases}
N_t &   0\le t\le\frac 12 \\
 X & \frac 12< t\le 1
\end{cases}\,.
$$
The support function $\Psi$ is admissible whilst $\Phi$ is not. However, we do have $\mathcal M(\Phi)=\mathcal M(\Psi)$. Indeed, the inclusion $\mathcal M (\Phi)\supseteq \mathcal M(\Psi)$ follows from $\Psi\le\Phi$. For the other inclusion, notice that, given $T\in\mathcal M(\Phi)$, we have 
$$TN_t\subseteq\Phi(N_t)=\Psi(N_t),
$$ for $t\neq \frac 12$,  and also 
$$TN_{\frac 12}=\vee_{t<\frac 12} TN_t\subseteq \vee_{t<\frac 12} \Phi(N_t)=\vee_{t<\frac 12} N_t=N_{\frac 12}=\Psi(N_{\frac 12}).$$
Hence $T\in\mathcal M(\Psi).$
\end{example}

\begin{proposition}\label{corrinj1}
 Let $\E$  be a nest in $\B(X)$ and let $\Phi$ be a support function on $\E$ such that  $\Phi(\{0\})=\{0\}$. Then the following hold.
 
 \begin{enumerate}
 \item [(i)] $\mathcal M(\Phi)=\mathcal M(\Phi_-)$.   
 \item [(ii)] $
 \Phi_{\mathcal M(\Phi)}=\Phi_-
 $.
\item [(iii)] If $\Phi$ is an admissible support function on $\E$,  then $\Phi_{\mathcal M(\Phi)}=\Phi.$
\end{enumerate}
\end{proposition}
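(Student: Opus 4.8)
The three statements are tightly linked, so I would prove them in the order (i), (ii), (iii), using each to bootstrap the next. The key technical fact I would lean on throughout is Lemma \ref{rankonenestalg} together with the description of $\Phi_-$ given just before Proposition \ref{p_phiJ}; the proof is really about which rank-$1$ operators $f\otimes x$ land in $\mathcal M(\Phi)$, mirroring the argument already used in Proposition \ref{corrinj0}.

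For (i), the inclusion $\mathcal M(\Phi_-)\subseteq\mathcal M(\Phi)$ is immediate from $\Phi_-\le\Phi$. For the reverse inclusion, take $T\in\mathcal M(\Phi)$ and $E\in\E$; I must show $TE\subseteq\Phi_-(E)$. If $E_-\subset E$ then $\Phi_-(E)=\Phi(E)$ and there is nothing to prove. If $E_-=E$ (and $E\ne\{0\}$, the case $E=\{0\}$ being trivial since $\Phi(\{0\})=\{0\}$), then $\Phi_-(E)=\vee_{F\subset E}\Phi(F)$, and since $E=\vee\{F:F\subset E\}$ by \eqref{01}, continuity of the operator $T$ gives $TE=T\bigl(\vee_{F\subset E}F\bigr)\subseteq\vee_{F\subset E}\overline{TF}\subseteq\vee_{F\subset E}\Phi(F)=\Phi_-(E)$. (To be careful: $\overline{TF}\subseteq\Phi(F)$ because $TF\subseteq\Phi(F)$ and $\Phi(F)$ is closed, and $T(\overline{\span}\cup F)\subseteq\overline{\span}\,T(\cup F)$ by continuity and linearity.) This gives $T\in\mathcal M(\Phi_-)$.

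For (ii), I first note $\mathcal M(\Phi)=\mathcal M(\Phi_-)$ by (i) and that $\Phi_-$ is admissible by definition, with $\Phi_-(\{0\})=\Phi(\{0\})=\{0\}$. Then (ii) is exactly the statement that $\Phi_{\mathcal M(\Phi_-)}=\Phi_-$, i.e., the special case (iii) applied to the admissible function $\Phi_-$. So the whole weight falls on (iii). To prove (iii), let $\Phi$ be admissible with $\Phi(\{0\})=\{0\}$ and fix $E\in\E$. The inclusion $\Phi_{\mathcal M(\Phi)}(E)=[\mathcal M(\Phi)E]\subseteq\Phi(E)$ is clear, since every $T\in\mathcal M(\Phi)$ satisfies $TE\subseteq\Phi(E)$ and $\Phi(E)$ is closed. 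For the reverse inclusion $\Phi(E)\subseteq[\mathcal M(\Phi)E]$, I would exhibit, for each $x\in\Phi(E)$, a rank-$1$ operator $f\otimes x\in\mathcal M(\Phi)$ with $f(E)\ne\{0\}$, so that $x\in f\otimes x(E)\subseteq\mathcal M(\Phi)E$. This is where the case split $E_-\subset E$ versus $E_-=E$ reappears, exactly as in Proposition \ref{corrinj0}: when $E_-\subset E$ pick $f\in E_-^{\perp}\setminus E^{\perp}$ and check $f\otimes x\in\mathcal M(\Phi)$ using order-preservation; when $E_-=E$ one needs $x\in\Phi(E)=\vee_{F\subset E}\Phi(F)$, and here the subtlety is that $x$ need not lie in any single $\Phi(F)$.

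The main obstacle is precisely this last point: when $E=E_-$, a vector $x\in\Phi(E)$ is only a limit of combinations of vectors from the $\Phi(F)$, $F\subset E$, so a single rank-$1$ operator will not capture it. I would handle this by approximation: vectors $x$ lying in some $\Phi(F)$ with $F\subset E$ are in $[\mathcal M(\Phi)E]$ by the rank-$1$ construction above (choosing $f\in F^{\perp}\setminus E^{\perp}$, which is nonempty since $F\subset E$ forces $E^{\perp}\subsetneq F^{\perp}$, and verifying $f\otimes x\in\mathcal M(\Phi)$ via monotonicity as in Proposition \ref{corrinj0}); since such vectors span a dense subspace of $\Phi(E)=\vee_{F\subset E}\Phi(F)$ and $[\mathcal M(\Phi)E]$ is closed, we get $\Phi(E)\subseteq[\mathcal M(\Phi)E]$. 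Combining the two inclusions yields $\Phi_{\mathcal M(\Phi)}(E)=\Phi(E)$ for all $E\ne\{0\}$, and for $E=\{0\}$ both sides equal $\{0\}$ by hypothesis; this proves (iii), hence (ii), completing the proposition.
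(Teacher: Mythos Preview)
Your proof is correct. Part (i) is exactly the paper's argument. For (ii) and (iii) you take a different but equally valid route: the paper proves (ii) first by observing that $\mathcal M(\Phi)$ is reflexive (so $\mathcal M(\Phi)=\mathcal M(\Phi_{\mathcal M(\Phi)})$), combining this with (i) to get $\mathcal M(\Phi_-)=\mathcal M(\Phi_{\mathcal M(\Phi)})$, and then invoking Proposition~\ref{corrinj0} to conclude $\Phi_-=\Phi_{\mathcal M(\Phi)}$; assertion (iii) then drops out of (ii). You instead prove (iii) directly by rebuilding the rank-$1$ construction from the proof of Proposition~\ref{corrinj0} (with the density step to handle $E=E_-$), and then deduce (ii) from (i) and (iii) applied to $\Phi_-$. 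Your approach is slightly more self-contained, avoiding the appeal to reflexivity and to Proposition~\ref{corrinj0} as a black box, at the cost of redoing its core computation; the paper's approach is shorter because it reuses Proposition~\ref{corrinj0} wholesale.
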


\begin{remark}\label{rem3}
It is clear from definition \eqref{01} and this proposition that, given an admissible support function $\Phi$ fixing $\{0\}$, the  maximal (weakly closed) bimodule having $\Phi$ as its   support function is $\mathcal M(\Phi)$.
\end{remark}

\begin{proof}
(i) We begin by showing that $\mathcal M(\Phi)=\mathcal M(\Phi_-)$. Since $\Phi_-\le\Phi$ we have the inclusion $\mathcal M(\Phi)\supseteq\mathcal M(\Phi_-)$.  

To prove that the reverse inclusion holds,  let $T\in\mathcal M(\Phi)$ and let $E\in\E$. 
We wish to show that $TE\subseteq\Phi_-(E)$. If $E=\{0\}$ or $E_-\subset E$, then $\Phi_-(E)=\Phi(E)$. Hence, we have   that $TE\subseteq\Phi_-(E)$ as a direct consequence of $T$ lying in $\mathcal M(\Phi)$.

 Suppose now that $\{0\}\neq E_-=E.$ Notice that, for all $F$ contained in $ E$, we have $TF\subseteq \Phi(F)$. Hence, 
$$
\vee_{F\subset E}TF\subseteq\vee_{F\subset E}\Phi(F)=\Phi_-(E).
$$

By the continuity of $T$, 
$$\vee_{F\subset E}TF=T\left(\vee_{F\subset E}F\right)=TE_-=TE,
$$
as required.

(ii) Notice that 
$$
\Phi_-(\{0\})=\Phi(\{0\})=\{0\}=\Phi_{\mathcal M(\Phi)}(\{0\}).
$$
 By Proposition \ref{corrinj0} and (i) of this proposition,  we have 
 $\mathcal M(\Phi_-)=\mathcal M(\Phi)=\mathcal M(\Phi_{\mathcal M(\Phi)})
 $. Hence, since $\Phi_-$ and $\Phi_{\mathcal M(\Phi)}$ are both admissible, it follows from Remark \ref{rem1} that $\Phi_{\mathcal M(\Phi)}=\Phi_-.$  

Assertion (iii) is an immediate consequence of (ii).
\end{proof}

A reflexive operator space is weakly closed but it is not necessarily true that a weakly closed operator space is reflexive.  However, we shall see   that   $\T(\E)$-bimodules  are weakly closed if and only if they are reflexive. The remainder of this section is devoted precisely to showing that these two classes of $\T(\E)$-bimodules coincide. A key tool to obtain this characterisation are the rank-$1$ and, more generally, the finite rank operators in a given bimodule.    The next three results  concern  these operators.

\begin{proposition}\label{p_aux}
 Let $\E$ be a nest, let $\T(\E)$ be the corresponding nest algebra and let $\J$ be a 
$\T(\E)$-bimodule.  Let $L,N\in\E$ be such that there exists $T\in\J$ with $TN\not\subseteq L_-$. Then, for every $f\in N_-^{\perp}$ and $x\in L$, the operator $f\otimes x$ lies in $\J$.
\end{proposition}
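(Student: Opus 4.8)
The plan is to realise $f\otimes x$ as a product $(g\otimes x)\,T\,(f\otimes v)$ of the given operator $T\in\J$ with two rank-$1$ operators $g\otimes x$ and $f\otimes v$ lying in $\T(\E)$, and then to conclude via the bimodule identity $\T(\E)\,\J\,\T(\E)\subseteq\J$. (If $f=0$ or $x=0$ the operator $f\otimes x$ is zero and already lies in $\J$, so there is nothing to do; otherwise we proceed as follows.)

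First I would unpack the hypothesis $TN\not\subseteq L_-$: choose a vector $v\in N$ with $Tv\notin L_-$. Since $L_-$ is a norm closed subspace of $X$ and $Tv\notin L_-$, the Hahn--Banach theorem supplies a functional $g\in X^*$ with $g(L_-)=\{0\}$ and $g(Tv)\neq 0$, that is, $g\in (L_-)^{\perp}$ and $g(Tv)\neq 0$. Next I would verify that the two rank-$1$ operators belong to the nest algebra by invoking Lemma~\ref{rankonenestalg}(ii): taking $E=N$, the conditions $v\in N$ and $f\in N_-^{\perp}$ give $f\otimes v\in\T(\E)$; taking $E=L$, the conditions $x\in L$ and $g\in (L_-)^{\perp}$ give $g\otimes x\in\T(\E)$.

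Then a direct computation shows that, for every $a\in X$,
$$
(g\otimes x)\,T\,(f\otimes v)(a)=f(a)\,g(Tv)\,x=g(Tv)\,(f\otimes x)(a),
$$
so $(g\otimes x)\,T\,(f\otimes v)=g(Tv)\,(f\otimes x)$. Since $g\otimes x,\,f\otimes v\in\T(\E)$ and $T\in\J$, the bimodule property yields $(g\otimes x)\,T\,(f\otimes v)\in\J$; dividing by the nonzero scalar $g(Tv)$ then gives $f\otimes x\in\J$, as required.

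There is no genuine obstacle here; the only point needing care is that the separating functional $g$ must be produced in $(L_-)^{\perp}$ rather than merely in $L^{\perp}$ — which is exactly what the strict hypothesis $Tv\notin L_-$ permits — and that one applies the ``$w\in E$, $f\in (E_-)^{\perp}$'' form of the rank-$1$ criterion (Lemma~\ref{rankonenestalg}(ii)) so that it dovetails with the given memberships $f\in N_-^{\perp}$ and $g\in (L_-)^{\perp}$.
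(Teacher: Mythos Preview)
Your proof is correct and follows essentially the same argument as the paper: choose $v\in N$ with $Tv\notin L_-$, pick $g\in(L_-)^\perp$ with $g(Tv)\neq 0$, verify via Lemma~\ref{rankonenestalg} that $f\otimes v$ and $g\otimes x$ lie in $\T(\E)$, and then sandwich $T$ to obtain $g(Tv)(f\otimes x)\in\J$. The only cosmetic difference is that the paper invokes part~(iii) of Lemma~\ref{rankonenestalg} for $g\otimes x$ (via $x\in L\subseteq(L_-)_+$) whereas you use part~(ii) with $E=L$; both are equally valid.
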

\begin{proof}
Let $f\in N_-^{\perp}$ and $x\in L$. Since $TN\not\subseteq L_-$, there exists $y\in N$ such that $Ty\not\in L_-$. Hence, there exists $g\in L_-^{\perp}$ such that $g(Ty)\neq 0$. 

By Lemma \ref{rankonenestalg},  we have that     $f\otimes y$ and $g\otimes x$ lie in $\mathcal T(\E)$, since $x\in L\subseteq (L_-)_+$. Hence
$$
(g\otimes x)T(f\otimes y)=(g\otimes x)(f\otimes Ty)=g(Ty)(f\otimes x)
$$
lies in $\J$. Since $g(Ty)\neq 0$, it follows that $f\otimes x\in\J$.   
\end{proof}

The following lemma characterising the rank-$1$ operators in $\mathcal M(\Phi)$ is essentialy [\cite{ErdosPower}, Lemma 1.1].

\begin{lemma}\label{bimodrankcar}
Let $\mathcal E$ be a nest in $\B(X)$ and let $\Phi$ be a support function on $\mathcal E$. Let $w\in X$ and let $f$ be a functional in $ X^*$. Then  
 the rank-$1$ operator $f\otimes w$ lies in  $\mathcal M(\Phi)$ if and only if there exists $E\in\mathcal E$ such that $f\in E^{\perp}$ and $w\in \wedge_{E\subset F}\Phi(F)$.
\end{lemma}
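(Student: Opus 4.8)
The plan is to prove the equivalence by a direct computation, handling each direction separately. First I would prove the easy direction: suppose there exists $E\in\E$ with $f\in E^{\perp}$ and $w\in\wedge_{E\subset F}\Phi(F)$. I want to show $f\otimes w(N)\subseteq\Phi(N)$ for every $N\in\E$. If $N\subseteq E$, then $f(N)=\{0\}$, so $f\otimes w(N)=\{0\}\subseteq\Phi(N)$. If $E\subset N$, then $N$ is one of the subspaces $F$ with $E\subset F$, so $w\in\wedge_{E\subset F}\Phi(F)\subseteq\Phi(N)$, hence $f\otimes w(N)\subseteq\mathrm{span}\{w\}\subseteq\Phi(N)$. (Here I am using totality of the nest to split into the two cases $N\subseteq E$ and $E\subset N$.) Thus $f\otimes w\in\mathcal M(\Phi)$.

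For the converse, suppose $f\otimes w\in\mathcal M(\Phi)$; I may assume $f\neq0$ and $w\neq0$, since otherwise the operator is zero and one can take $E=\{0\}$ (using $\Phi(\{0\})\supseteq\{0\}$; more carefully, the zero operator trivially lies in $\mathcal M(\Phi)$ and with $f=0$ we have $f\in\{0\}^{\perp}$ and $w\in\wedge_{\{0\}\subset F}\Phi(F)$ need not hold, so I would instead just note the claim for $f\ne0$ covers the substantive case and record the degenerate case separately, e.g. $E=X$ works when $f=0$ since $f\in X^{\perp}=\{0\}$ and $w\in\Phi(X)$ is vacuously the empty meet — I would double-check this boundary wording against the paper's conventions). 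The natural choice is
$$
E=\vee\{N\in\E: f\in N^{\perp}\}=\vee\{N\in\E: f(N)=\{0\}\}.
$$
First I would check $E\in\E$, which holds because $\E$ is a nest (closed under joins), and that $f\in E^{\perp}$: indeed $E^{\perp}=\left(\vee\{N: f(N)=\{0\}\}\right)^{\perp}=\cap\{N^{\perp}: f(N)=\{0\}\}$, which contains $f$. So it remains to show $w\in\Phi(F)$ for every $F\in\E$ with $E\subset F$. Fix such an $F$. Since $E\subset F$ and $E$ is the join of all subspaces annihilated by $f$, $F$ is not annihilated by $f$, i.e. there is $y\in F$ with $f(y)\neq0$. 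Then $f\otimes w(F)\ni f(y)w$, and since $f\otimes w\in\mathcal M(\Phi)$ we get $f(y)w\in\Phi(F)$, hence $w\in\Phi(F)$ (as $\Phi(F)$ is a subspace). Therefore $w\in\wedge_{E\subset F}\Phi(F)$, completing the proof.

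The main subtlety — and the step I would be most careful about — is the boundary case analysis, specifically confirming that $E=\vee\{N: f(N)=\{0\}\}$ genuinely satisfies $f\in E^{\perp}$ even though an arbitrary join of weak*-closed annihilator subspaces need not behave well; here it works precisely because $E^{\perp}$ of a join is the intersection of the annihilators, a fact already recorded in the preliminaries ($\vee_{i} M_i=\overline{\mathrm{span}}(\cup M_i)$ and $(S^{\perp})_{\perp}=[S]$). I would also want to make sure the empty-meet and zero-operator edge cases are phrased consistently with how the paper uses $\wedge$ over a possibly empty index set and with the role of $\Phi(\{0\})$, but the substantive content is entirely the two short arguments above, and no serious obstacle is expected since this mirrors \cite[Lemma 1.1]{ErdosPower}.
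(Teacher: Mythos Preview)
The paper does not supply a proof of this lemma; it simply records it as ``essentially \cite[Lemma 1.1]{ErdosPower}''. Your argument is correct and is exactly the standard one. One remark: your hand-wringing over the degenerate cases is unnecessary, since your chosen $E=\vee\{N\in\E:f(N)=\{0\}\}$ already handles them uniformly --- if $f=0$ this join equals $X$, whence $f\in X^{\perp}$ and $\wedge_{X\subset F}\Phi(F)$ is the empty meet $X$, so $w\in X$ trivially; and if $w=0$ then $w$ lies in any subspace, in particular in $\wedge_{E\subset F}\Phi(F)$.
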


Although the proof of the next theorem be similar to that of [\cite{LiLu}, Theorem 3.1], we include it here for the reader's convenience.

\begin{theorem}\label{finiteranksum}
Let $\mathcal E$ be a nest in $\B(X)$, let $\Phi$ be a support function on $\mathcal E$ and let $T$ be a rank-$n$ operator in $\mathcal M(\Phi)$. Then $T$ can be written as a sum of $n$ rank-1 operators in $\mathcal M(\Phi)$.
\end{theorem}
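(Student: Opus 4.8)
The plan is to argue by induction on the rank $n$, peeling off at each stage one rank-$1$ operator lying in $\mathcal M(\Phi)$; this follows the now-standard pattern (cf.\ the proof of \cite[Theorem~3.1]{LiLu}), with Lemma~\ref{bimodrankcar} in the role of the description of the rank-$1$ operators of $\mathcal M(\Phi)$. The case $n=1$ is trivial. Assume $n\ge 2$ and the statement for rank $n-1$, and let $T\in\mathcal M(\Phi)$ have rank $n$, with range $W=TX$, so $\dim W=n$. It suffices to produce a nonzero rank-$1$ operator $f\otimes w_1\in\mathcal M(\Phi)$ such that $T':=T-f\otimes w_1$ has rank at most $n-1$: since $\mathcal M(\Phi)$ is a linear space we automatically get $T'\in\mathcal M(\Phi)$, subadditivity of rank forces $\operatorname{rank}T'=n-1$ (otherwise $T$ would have rank $<n$), and the inductive hypothesis writes $T'$ as a sum of $n-1$ rank-$1$ operators of $\mathcal M(\Phi)$, whence $T$ is a sum of $n$ of them.

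The crux is the choice of $w_1$ and $f$. Put $E^{\circ}=\vee\{E\in\mathcal E:TE=\{0\}\}$, which by continuity of $T$ is itself annihilated by $T$, hence is the largest element of $\mathcal E$ killed by $T$; since $T\ne 0$ we have $E^{\circ}\ne X$, so $TF\ne\{0\}$ whenever $F\in\mathcal E$ and $F\supset E^{\circ}$. Write $\Phi^{+}(E)=\wedge_{E\subset F}\Phi(F)$. By Lemma~\ref{finitesubspacenotzero}, $L:=\wedge\{N\in\mathcal E:N\cap W\ne\{0\}\}$ satisfies $L\cap W\ne\{0\}$. The key point is that $\Phi^{+}(E^{\circ})\supseteq L$: for every $F\supset E^{\circ}$ one has $\{0\}\ne TF\subseteq\Phi(F)\cap W$, so $\Phi(F)$ is a nest element meeting $W$ nontrivially and therefore $\Phi(F)\supseteq L$; taking the meet over all such $F$ gives the claim. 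Hence $L\cap W\subseteq\Phi^{+}(E^{\circ})$, and we may fix a nonzero $w_1\in L\cap W$; it then lies both in $W$ and in $\Phi^{+}(E^{\circ})=\wedge_{E^{\circ}\subset F}\Phi(F)$.

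Now pick $c\in W^{*}$ with $c(w_1)=1$ (possible since $w_1\ne 0$) and set $f=c\circ T\in X^{*}$, that is, $f(x)=c(Tx)$. Then $f\ne 0$, since $c$ does not vanish on $W=TX$, and $f\in(E^{\circ})^{\perp}$, since $f(E^{\circ})=c(TE^{\circ})=\{0\}$; combining this with $w_1\in\wedge_{E^{\circ}\subset F}\Phi(F)$, Lemma~\ref{bimodrankcar} yields $f\otimes w_1\in\mathcal M(\Phi)$, and $f\otimes w_1\ne 0$ because $w_1\ne 0$. Finally, for each $x\in X$ the vector $T'x=Tx-c(Tx)w_1$ belongs to $W$ and satisfies $c(T'x)=c(Tx)-c(Tx)c(w_1)=0$; thus $T'X$ is contained in the $(n-1)$-dimensional subspace $\{y\in W:c(y)=0\}$, so $\operatorname{rank}T'\le n-1$, which is all that was needed.

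The step I expect to be the main obstacle is the middle one: arranging that $w_1$ can be taken simultaneously in the range $W$ and in $\wedge_{E^{\circ}\subset F}\Phi(F)$, that is, that the bottom level $E^{\circ}$ from which $T$ acts is ``visible'' to $\Phi$. This is where finite dimensionality of $W$ enters (so that the chain $\{N\cap W:N\in\mathcal E\}$ is finite and Lemma~\ref{finitesubspacenotzero} is available) and where one must work with $\Phi^{+}(E^{\circ})$ rather than $\Phi(E^{\circ})$, since $\Phi$ is only assumed order preserving and need not be left continuous; the inclusion $TF\subseteq\Phi(F)\cap W$ for $F\supset E^{\circ}$ is exactly what bridges the two. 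Everything else — the verification $f\otimes w_1\in\mathcal M(\Phi)$ via Lemma~\ref{bimodrankcar} and the rank arithmetic — is routine.
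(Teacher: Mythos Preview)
Your proof is correct and follows essentially the same inductive strategy as the paper: both use the same subspace $L=\wedge\{N\in\mathcal E:N\cap W\ne\{0\}\}$ from Lemma~\ref{finitesubspacenotzero}, pick the same vector $w_1\in L\cap W$, and define $f$ the same way as $c\circ T$ for some $c$ with $c(w_1)=1$. The one technical difference is in how membership $f\otimes w_1\in\mathcal M(\Phi)$ is verified: the paper checks the defining condition $(f\otimes w_1)(N)\subseteq\Phi(N)$ directly for each $N$ (splitting according to whether $\Phi(N)\supseteq L$ or $\Phi(N)\subset L$), whereas you introduce the auxiliary element $E^{\circ}=\vee\{E:TE=\{0\}\}$, prove $L\subseteq\wedge_{E^{\circ}\subset F}\Phi(F)$, and invoke Lemma~\ref{bimodrankcar}. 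Your route is slightly more structural and makes explicit why the rank-$1$ characterisation applies; the paper's is marginally more self-contained. Your argument for the rank drop (showing $T'X\subseteq\ker c\cap W$) is also a bit cleaner than the paper's linear-dependence computation, and you correctly observe that subadditivity forces $\operatorname{rank}T'=n-1$ exactly, a point the paper leaves implicit.
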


\begin{proof}
Let 
$
T=\sum_{i=1}^n{f_i\otimes x_i} 
$
 be a finite rank operator in $\mathcal M(\Phi)$, where $x_i\in X, f_i\in X^*$, for all $i=1,...,n$. The proof will use induction on the rank $n$ of $T$. 

If $n=0,1$ the result holds trivially. Assume that $n>1$ and that every finite rank operator in $\mathcal M(\Phi)$ of rank $m\le n-1$ is the sum of $m$ rank one operators in $\mathcal M(\Phi)$. 

Let $W$ be the finite dimensional space  $W=T(X)$ and let 
 $L=\wedge\{N\in\E:N\cap W\neq \{0\}\}.
 $
We have that $L\cap W\neq \{0\}$ (see Lemma \ref{finitesubspacenotzero} and its proof). 

Let  $x\in L\cap W$ be  a non-zero vector,  let $g\in X^*$ be such that $g(x)=1$ and define $f\in X^*$  by  $f(y)=g(Ty)$ for all $y\in X$. 
 We  prove now that $f\otimes x\in\mathcal M(\Phi)$. 
 
 Let $N\in\E$. If $\Phi(N)\subset L$, then $TN\subseteq W\cap \Phi(N)=\{0\}$. Hence,
  $$f\otimes x(N)=g(TN)x=\{0\}\subseteq\Phi(N).
  $$
   If on the other hand $\Phi(N)\supseteq L$, then $L\cap W\subseteq \Phi(N)\cap W$.
   Hence $x\in\Phi(N)\cap W$ and we have that 
   $$f\otimes x(N)\subseteq\text{span}\{x\}\subseteq \Phi(N),
   $$
   yielding  that $f\otimes x\in\mathcal M(\Phi)$. 
   
   We  show next that $T-f\otimes x$ has rank $\le n-1$, which, by the induction hypothesis, is enough to finish the proof.  Since $x\in W=T(X)$, there exist $\alpha_1, \dots, \alpha_n\in \complexos$ such that 
$
x=\sum_{i=1}^n\alpha_ix_i
$.
 Hence
$$T-f\otimes x=\sum_{i=1}^n{(f_i-\alpha_if)\otimes x_i}.
$$
 We need only to show that $\{f_i-\alpha_if:i=1,...,n\}$ is a linearly dependent set. 
 
 By the definition of $f$,  
$$
0=T^*g\otimes x-f\otimes x=(g\otimes x)(T-f\otimes x)=\left(\sum_{i=1}^ng(x_i)(f_i-\alpha_if)\right)\otimes x
$$
from which follows that $\sum_{i=1}^ng(x_i)(f_i-\alpha_if)=0$. But, since 
$$
1=g(x)=\sum_{i=1}^n\alpha_ig(x_i),
$$
 there exists $i\in\{1,...,n\}$ such that  $g(x_i)\neq 0$. Hence $\{f_i-\alpha_if:i=1,...,n\}$ is a linearly dependent set, as required.
\end{proof}

Given a nest $\E$ and its   corresponding nest algebra $\T(\E)$,  let $\J$ be a $\T(\E)$-bimodule. We denote by $\J_0$ the $\T(\E)$-bimodule contained in  $\J$ consisting of  its finite rank operators.

\begin{proposition}\label{samesupport}
Let $\Phi$ be an admissible support function on $\E$ and let $\J$ be a $\T(\E)$-bimodule with associated support function $\Phi_\J$. Then, the support functions $\Phi_{\J_0}$ and $\Phi_{\overline{\J_0}}$ 
of the $\T(\E)$-bimodules $\J_0$ 
and $\overline{\J_0}$, respectively, are such that
$$\Phi_{\J_0}=\Phi_{\overline{\J_0}}=\Phi_\J.
$$	
\end{proposition}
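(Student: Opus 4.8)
The plan is to reduce the assertion to the single inclusion $\Phi_\J(E)\subseteq\Phi_{\J_0}(E)$ for every $E\in\E$ and to establish it by manufacturing rank-one operators via Proposition \ref{p_aux}. Two reductions are routine. First, $\J_0\subseteq\J$ gives $\Phi_{\J_0}\le\Phi_\J$ directly. Second, for any $\T(\E)$-bimodule $\K$ one has $\Phi_{\overline\K}=\Phi_\K$: the inclusion $[\K E]\subseteq[\overline\K E]$ is trivial, and if $S=\lim_nS_n$ with $S_n\in\K$ then $Sv=\lim_nS_nv\in[\K E]$ for each $v\in E$, so $[\overline\K E]\subseteq[\K E]$; applying this with $\K=\J_0$ (a norm closure of a bimodule being again a bimodule) gives $\Phi_{\overline{\J_0}}=\Phi_{\J_0}$. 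Hence everything rests on $[\J E]\subseteq[\J_0 E]$.

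For $v\in X$ put $M_v=\wedge\{N\in\E:v\in N\}$; as $v$ lies in each such $N$, we have $v\in M_v$, so $M_v$ is the smallest element of $\E$ containing $v$. The key claim is: if $T\in\J$, $v\in E$ and $(M_v)_-\subset E$, then $Tv\in\Phi_{\J_0}(E)$. For the proof set $w=Tv$, assume $w\neq0$, and let $L=\wedge\{N\in\E:w\in N\}$. Since $(M_v)_-\subset E$ we have $E\not\subseteq(M_v)_-$, so there is $f\in(M_v)_-^{\perp}$ with $f(v_0)\neq0$ for some $v_0\in E$. If $L_-\subset L$ then $w\notin L_-$, and since $w=Tv\in TM_v$ this forces $TM_v\not\subseteq L_-$; Proposition \ref{p_aux} applied to the pair $M_v,L$ then yields $f\otimes x\in\J$ for every $x\in L$, and as $(f\otimes x)(v_0)=f(v_0)x\in\J_0 E$ we conclude $L\subseteq\Phi_{\J_0}(E)\ni w$. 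If $L_-=L$, then every $H\in\E$ with $H\subset L$ satisfies $w\notin H$, hence $TM_v\not\subseteq H_-$, and the same use of Proposition \ref{p_aux} gives $H\subseteq\Phi_{\J_0}(E)$; taking the join over all such $H$ gives $L=L_-\subseteq\Phi_{\J_0}(E)\ni w$. Note only rank-one operators are needed here.

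To finish, fix $E\in\E$, the case $E=\{0\}$ being trivial. If $E_-\subset E$: for $T\in\J$ and $v\in E$ we have $M_v\subseteq E$ and $(M_v)_-\subset E$ (equality would force $E_-=E$), so the claim gives $Tv\in\Phi_{\J_0}(E)$; as $[\J E]$ is generated by such vectors $Tv$, this yields $\Phi_\J(E)\subseteq\Phi_{\J_0}(E)$. If $E_-=E$: the claim cannot be applied to a vector $v$ with $M_v=E$, and this is the main obstacle. It is bypassed by left continuity. For $F\subset E$ and $v'\in F$ one has $M_{v'}\subseteq F\subset E$, so $(M_{v'})_-\subset E$ and the claim gives $Tv'\in\Phi_{\J_0}(E)$ for every $T\in\J$; thus $\Phi_\J(F)=[\J F]\subseteq\Phi_{\J_0}(E)$. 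Since $\Phi_\J$ is admissible (Proposition \ref{p_phiJ}), $\Phi_\J(E)=\Phi_\J(E_-)=\vee_{F\subset E}\Phi_\J(F)\subseteq\Phi_{\J_0}(E)$. This gives $\Phi_\J=\Phi_{\J_0}=\Phi_{\overline{\J_0}}$. The delicate point — where the Banach-space setting differs from the Hilbert space one — is precisely this continuity-point case: with $E_-=E$ no functional in $E_-^{\perp}$ detects a vector that "fills" $E$, so one is forced to descend to strictly smaller elements of the nest and glue the pieces together using the left continuity of $\Phi_\J$.
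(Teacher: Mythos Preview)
Your proof is correct but follows a genuinely different route from the paper's. The paper dispatches the only nontrivial inclusion $[\J N]\subseteq[\J_0 N]$ in two lines using Theorem \ref{rankstrong}: take a net $(R_\alpha)$ of finite rank operators in $\T(\E)$ converging strongly to the identity; then for $J\in\J$ and $x\in N$ one has $Jx=\lim JR_\alpha x$ with $JR_\alpha\in\J_0$, so $Jx\in[\J_0 N]$. No case analysis, no appeal to left continuity of $\Phi_\J$.

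Your argument instead builds rank-one operators inside $\J$ by hand via Proposition \ref{p_aux}, tracking the minimal nest elements $M_v$ and $L$ containing $v$ and $Tv$, and then glues the pieces together at continuity points using the admissibility of $\Phi_\J$. This is more labour, but it has a point: it avoids the strong density theorem entirely, relying only on the elementary rank-one machinery of Lemma \ref{rankonenestalg} through Proposition \ref{p_aux}. So your proof is in some sense more self-contained, while the paper's is much shorter because it cashes in the heavier approximation result already on the shelf. One small remark: your closing comment that the continuity-point case is ``where the Banach-space setting differs from the Hilbert space one'' is not quite on target here, since the paper's SOT-approximation argument works uniformly in both settings and never splits into cases; the case split is an artefact of your particular method, not of the Banach-space generality.
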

\begin{proof}
 Since the inclusion $[\mathcal JN]\supseteq[\mathcal J_0N]$ and the  equality $[\mathcal J_0N]=[\overline{\mathcal J_0}N]$ are obvious, it suffices to show that,  for all $N\in\mathcal E$,
 $$
 [\mathcal JN]\subseteq[\mathcal J_0N].
 $$ 
 By Theorem \ref{rankstrong}, there exists  a net $\{R_{\alpha}\}$ of finite rank operators in $\mathcal T(\E)$ converging to $I$ in the strong operator topology. It follows that, for any $J\in\mathcal J$,   $\{JR_{\alpha}\}$ is a net of finite rank operators in $\mathcal J_0$ strongly converging to $J$. Hence, given $x\in N$,  we have $Jx = \lim JR_{\alpha}x$ and, consequently, $Jx\in[\mathcal J_0x]$. Since $x$ and $J$ are arbitrary, 
  $[\mathcal JN]\subseteq[\mathcal J_0N]$, which concludes the proof.
\end{proof}

\begin{proposition}\label{p_prop1}
Given a nest $\E$ and its   corresponding nest algebra $\T(\E)$,  let $\J$ be a $\T(\E)$-bimodule. Then $\J\subseteq \overline{\J_0}^{\text{SOT}}\subseteq \overline{\J_0}^{\text{WOT}}$.
\end{proposition}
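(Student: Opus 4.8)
The plan is to treat the two inclusions separately: the second is essentially formal, and the first rests on the strong density of finite rank operators in $\T(\E)$ provided by Theorem \ref{rankstrong}, together with the bimodule property.

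First I would dispose of $\overline{\J_0}^{\text{SOT}}\subseteq\overline{\J_0}^{\text{WOT}}$. Since every net converging in the strong operator topology also converges in the weak operator topology, the SOT-closure of any subset of $\B(X)$ is contained in its WOT-closure. Applying this to $\J_0$ gives the claim immediately, with no use of the bimodule structure.

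For the inclusion $\J\subseteq\overline{\J_0}^{\text{SOT}}$ I would reuse the device from the proof of Proposition \ref{samesupport}. By Theorem \ref{rankstrong} there is a net $\{R_\alpha\}$ of finite rank operators in $\T(\E)$ with $R_\alpha\to I$ in the strong operator topology. Fix $J\in\J$. Since $\J$ is a $\T(\E)$-bimodule and each $R_\alpha$ has finite rank, $JR_\alpha\in\J$ has finite rank, hence $JR_\alpha\in\J_0$. For every $x\in X$, continuity of $J$ together with $R_\alpha x\to x$ yields $(JR_\alpha)x=J(R_\alpha x)\to Jx$; that is, $JR_\alpha\to J$ in the strong operator topology. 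Therefore $J\in\overline{\J_0}^{\text{SOT}}$, and as $J$ was arbitrary, $\J\subseteq\overline{\J_0}^{\text{SOT}}$.

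I do not expect a genuine obstacle here: the only non-elementary input is Theorem \ref{rankstrong}, and the bimodule property enters solely to keep $JR_\alpha$ inside $\J$. The one point requiring a little care is that $\{R_\alpha\}$ is a net rather than a sequence, so all the convergence statements should be phrased in net terms; this is automatic from the formulation of Theorem \ref{rankstrong}. Note also that the same net $\{R_\alpha\}$ works simultaneously for all $J\in\J$, so the argument is in fact a uniform density statement.
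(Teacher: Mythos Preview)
Your proof is correct and follows essentially the same approach as the paper: invoke Theorem~\ref{rankstrong} to obtain a net of finite rank operators $R_\alpha\to I$ in SOT, then use the bimodule property to place the products in $\J_0$. The only cosmetic difference is that you multiply on the right ($JR_\alpha$) and appeal to norm continuity of $J$, whereas the paper multiplies on the left ($R_\alpha J$) and uses SOT convergence of $R_\alpha$ at the point $Jx$; both work since $\J$ is a two-sided bimodule.
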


\begin{proof}
 Let $J\in\J$. By Theorem \ref{rankstrong} there exists a net $\{R_{\alpha}\}$ of finite rank operators in $\mathcal T(\mathcal E)$ converging to the identity in the strong operator topology. Then, clearly, $\{R_{\alpha}J\}$ is a net of finite rank operators in $\J$ converging to $J$ in the strong operator topology proving the first inclusion above.	
  The remaining inclusion is obvious.
\end{proof}

The following corollary is an immediate consequence of this proposition.

\begin {corollary}\label{weakspan}
Let $\J$ be a weakly closed $\T(\E)$-bimodule.
Then $\J=\overline{\J_0}^{\text{WOT}}$. 
\end{corollary}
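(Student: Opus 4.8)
The plan is to establish the two inclusions separately; one of them is essentially free from Proposition \ref{p_prop1}, and the other is a soft consequence of weak closedness. Since Proposition \ref{p_prop1} gives $\J \subseteq \overline{\J_0}^{\text{SOT}} \subseteq \overline{\J_0}^{\text{WOT}}$ for \emph{every} $\T(\E)$-bimodule $\J$, the inclusion $\J \subseteq \overline{\J_0}^{\text{WOT}}$ holds with no extra hypothesis; in particular it holds when $\J$ is weakly closed.

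For the reverse inclusion $\overline{\J_0}^{\text{WOT}} \subseteq \J$, the key observation is simply that $\J_0$, the set of finite rank operators of $\J$, is by its very definition contained in $\J$ (and is a $\T(\E)$-subbimodule of it). Hence $\overline{\J_0}^{\text{WOT}} \subseteq \overline{\J}^{\text{WOT}}$, and since $\J$ is assumed weakly closed, $\overline{\J}^{\text{WOT}} = \J$. Combining the two inclusions yields $\J = \overline{\J_0}^{\text{WOT}}$.

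As for the main obstacle: there is essentially none at this stage, since all the substance has already been absorbed into Proposition \ref{p_prop1}, which in turn rests on Theorem \ref{rankstrong} (the strong density of the finite rank operators in $\T(\E)$). The only point worth making explicit in the write-up is that multiplying an operator $J \in \J$ on the left by the finite rank operators $R_\alpha \in \T(\E)$ approximating the identity in the SOT keeps one inside $\J_0$ --- this is exactly the bimodule condition $\T(\E)\J \subseteq \J$ together with the fact that $R_\alpha J$ still has finite rank --- and this is already recorded in the proof of Proposition \ref{p_prop1}. So the corollary is indeed immediate, and the proof amounts to citing that proposition for one inclusion and invoking $\J_0 \subseteq \J$ plus weak closedness for the other.
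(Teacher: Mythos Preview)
Your proof is correct and follows exactly the same approach as the paper, which simply records the corollary as an immediate consequence of Proposition~\ref{p_prop1}. Both arguments cite that proposition for the inclusion $\J\subseteq\overline{\J_0}^{\text{WOT}}$ and then use $\J_0\subseteq\J$ together with weak closedness of $\J$ for the reverse inclusion.
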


\begin{lemma}\label{rankoneban}
Given a nest $\E$ and its   corresponding nest algebra $\T(\E)$,  let $\J$ be a weakly closed $\T(\E)$-bimodule and let $\Phi_{\mathcal J}$ be the support function associated with $\J$. 
 Then the $\T(\E)$-bimodules $\mathcal J$ and $\mathcal M(\Phi_{\mathcal J})$ contain the same finite rank operators.
\end{lemma}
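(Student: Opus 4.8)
The plan is to show that the finite rank operators in $\mathcal J$ and in $\mathcal M(\Phi_{\mathcal J})$ coincide by proving the two inclusions separately, the inclusion $\mathcal J_0\subseteq\mathcal M(\Phi_{\mathcal J})_0$ being essentially immediate and the reverse one being the substance of the lemma. For the easy direction, if $T\in\mathcal J$ has finite rank then for every $E\in\E$ we have $TE\subseteq[\mathcal JE]=\Phi_{\mathcal J}(E)$, so $T\in\mathcal M(\Phi_{\mathcal J})$; hence $\mathcal J_0\subseteq\mathcal M(\Phi_{\mathcal J})_0$. Note also that, by Proposition~\ref{p_phiJ}, $\Phi_{\mathcal J}$ is an admissible support function, so the results on $\mathcal M(\Phi)$ proved above apply to it.

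For the reverse inclusion it suffices, by Theorem~\ref{finiteranksum}, to show that every rank-$1$ operator in $\mathcal M(\Phi_{\mathcal J})$ already lies in $\mathcal J$: indeed, a rank-$n$ operator $T\in\mathcal M(\Phi_{\mathcal J})$ decomposes as a sum of $n$ rank-$1$ operators in $\mathcal M(\Phi_{\mathcal J})$, and once each summand is known to be in $\mathcal J$ so is $T$. So let $f\otimes w$ be a rank-$1$ operator in $\mathcal M(\Phi_{\mathcal J})$. By Lemma~\ref{bimodrankcar} there is $E\in\E$ with $f\in E^{\perp}$ and $w\in\wedge_{E\subset F}\Phi_{\mathcal J}(F)$. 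The idea is to realise $w$ as a limit of vectors coming from $\mathcal J$ acting on subspaces close to $E$ and then to use Proposition~\ref{p_aux} to manufacture $f\otimes w$ inside $\mathcal J$. Concretely, fix $F\in\E$ with $E\subset F$; then $w\in\Phi_{\mathcal J}(F)=[\mathcal JF]$, so $w$ lies in the closed span of vectors $Ty$ with $T\in\mathcal J$, $y\in F$. For such a $T$ and $y$ with $Ty\notin$ (an appropriate smaller subspace), Proposition~\ref{p_aux}, applied with $N=F$ (so $N_-\subseteq F$ and, since $f\in E^{\perp}\subseteq F_-^{\perp}$ when $E\subset F$ forces $F_-\supseteq E$, hence $f\in F_-^{\perp}$) and with the target vector, produces rank-$1$ operators $f\otimes(Ty)\in\mathcal J$. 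Taking the appropriate closed linear combinations, $f\otimes w$ is a norm limit of operators in $\mathcal J$; since $\mathcal J$ is weakly closed it is in particular norm closed, so $f\otimes w\in\mathcal J$.

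The step I expect to be the main obstacle is the bookkeeping in the previous paragraph: Proposition~\ref{p_aux} requires both the hypothesis $f\in N_-^{\perp}$ and the existence of some $T\in\mathcal J$ with $TN\not\subseteq L_-$, i.e.\ it only yields rank-$1$ operators built from vectors that are \emph{not} swallowed by the relevant predecessor subspace, so one must argue that the vectors $Ty$ which are ``too small'' contribute nothing that is not already captured, and that the condition $f\in N_-^{\perp}$ genuinely holds for the $N=F$ one uses (this is where $f\in E^{\perp}$ and $E\subset F$, hence $E\subseteq F_-$, are used). One must also make sure the closure operation is compatible: $[\mathcal JF]$ is a \emph{norm}-closed span, Proposition~\ref{p_aux} gives individual rank-$1$ operators $f\otimes(Ty)$ in $\mathcal J$ with $\|f\otimes(Ty)-f\otimes(Ty')\|\le\|f\|\,\|Ty-Ty'\|$, so norm convergence $Ty_k\to w$ transfers to norm convergence $f\otimes(Ty_k)\to f\otimes w$, and the norm closedness of $\mathcal J$ (a consequence of its being weakly closed) closes the argument. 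Once the rank-$1$ case is settled, Theorem~\ref{finiteranksum} finishes the proof immediately.
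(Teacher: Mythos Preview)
Your overall strategy---reduce to rank-$1$ operators via Theorem~\ref{finiteranksum}, then push a rank-$1$ operator $f\otimes w$ with $f\in E^{\perp}$ and $w\in\wedge_{E\subset F}\Phi_{\mathcal J}(F)$ into $\mathcal J$ by approximation---matches the paper's. But the key step is wrong, and it is precisely the step you flagged as the ``main obstacle''.

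You write that ``$f\in E^{\perp}\subseteq F_-^{\perp}$ when $E\subset F$ forces $F_-\supseteq E$''. The second clause is correct: $E\subset F$ gives $E\subseteq F_-$. But then annihilators reverse inclusions, so $F_-^{\perp}\subseteq E^{\perp}$, not the other way round. Thus $f\in E^{\perp}$ does \emph{not} give $f\in F_-^{\perp}$, and Proposition~\ref{p_aux} cannot be invoked with $N=F$ and this $f$. The same obstruction blocks the more direct variant ``$f\otimes y\in\mathcal T(\mathcal E)$ for $y\in F$, hence $T(f\otimes y)=f\otimes Ty\in\mathcal J$'': Lemma~\ref{rankonenestalg} would need $y\in E_+$, not merely $y\in F$. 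A symptom that something is missing is that your argument uses only the \emph{norm} closedness of $\mathcal J$, whereas the lemma is stated for weakly closed bimodules.

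The paper resolves exactly this difficulty by approximating $f$ rather than $w$. Using Lemma~\ref{perpspan}, one writes $f$ as a weak*-limit of functionals $f_\alpha\in N_\alpha^{\perp}$ with $(N_\alpha)_+\supset E$; for each $\alpha$ one chooses $w_\alpha\in (N_\alpha)_+\setminus E$, so that $f_\alpha\otimes w_\alpha\in\mathcal T(\mathcal E)$ by Lemma~\ref{rankonenestalg}. Since $E\subset[\mathcal T(\mathcal E)w_\alpha]\in\mathcal E$, one has $w\in\Phi_{\mathcal J}([\mathcal T(\mathcal E)w_\alpha])\subseteq[\mathcal J w_\alpha]$, allowing a choice of $J_\alpha\in\mathcal J$ with $J_\alpha w_\alpha\to w$ in norm. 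Then $J_\alpha(f_\alpha\otimes w_\alpha)=f_\alpha\otimes J_\alpha w_\alpha\in\mathcal J$ converges \emph{weakly} to $f\otimes w$, and here the weak closedness of $\mathcal J$ is genuinely used.
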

\begin{proof}
Since $\mathcal J\subseteq \mathcal M(\Phi_{\mathcal J})$, we only need to show that the set of finite rank operators in  $\mathcal M(\Phi_{\mathcal J})$  is contained in $\J_0$. 
Since, by Proposition \ref{finiteranksum}, any  finite rank operator $T$ in $\mathcal M(\Phi_\J)$ can be split into a sum of rank-$1$ operators in $\mathcal M(\Phi_\J)$, it is enough to show that the set of rank-$1$ operators in  $\mathcal M(\Phi_{\mathcal J})$  is contained in $\J_0$. This will conclude the proof. 

Let $f\otimes w$ be a rank-$1$ operator in $M(\Phi_{\mathcal J})$. By Proposition \ref{bimodrankcar}, there exists $E\in\mathcal E$ such that $f\in E^{\perp}$ and $w\in \wedge_{E\subset F}\Phi_\J(F)$. 

By Lemma \ref{perpspan}, there exists a net $(N_\alpha)$ in $\E$ and  a net  $(f_{\alpha})$ in $X^*$   such that, for each $\alpha$,  $f_\alpha\in N_\alpha^{\perp}$ and  $(N_{\alpha})_+\supset E$,  and $(f_{\alpha})$ converges to $f$ in weak*- topology. 

For each $\alpha$, let $w_\alpha\in (N_{\alpha})_+\backslash E$. Notice that, by the reflexivity of $\E$, $[\mathcal T(\E)w_\alpha ]\in\E$. Since  $w_\alpha\in [\mathcal T(\E)w_\alpha ]$,  we must have the strict inclusion $E\subset [\mathcal T(\mathcal E)w_\alpha]$. It follows that 
$$
\wedge_{E\subset F}\Phi_\J(F)\subseteq \Phi_{\mathcal J}([\mathcal T(\mathcal E) w_\alpha])=[\mathcal J[\mathcal T(\mathcal E) w_\alpha]]\subseteq [\mathcal J w_\alpha].
$$
 Hence, for each $\alpha$, we can find $J_\alpha\in\mathcal J$ such that $(J_\alpha w_\alpha)$ converges to $w$ in the norm topology.

Since $f_\alpha\in N_\alpha^{\perp}$ and $ w_\alpha \in (N_{\alpha})_+$, it follows from  Lemma \ref{rankonenestalg} that $ f_\alpha\otimes w_\alpha\in\mathcal T(\mathcal E)$. Hence
$
J_\alpha(f_\alpha\otimes w_\alpha)=f_\alpha\otimes J_\alpha w_\alpha 
$ 
lies in $\J$.

Given that $(f_\alpha)$ converges to $f$ in the weak*-topology and $(J_{\alpha}w_\alpha)$ converges  to $w$ in the norm toplogy, we have that $(f_\alpha\otimes J_\alpha w_\alpha)$  converges weakly to $f\otimes w$. Since $\mathcal J$ is weakly closed we have  that $f\otimes w\in\mathcal J$.
\end{proof}

We are now ready to prove that the weakly closed bimodules of a Banach space nest algebra coincide with its reflexive bimodules.

\begin{theorem}\label{closedcar}
Let $\E$ be a nest, let $\T(\E)$ be the   corresponding nest algebra  and let $\J$ be a weakly closed $\T(\E)$-bimodule.
 Then $\J=\mathcal M(\supp)$ and $\J$ is reflexive.
\end{theorem}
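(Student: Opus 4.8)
The plan is to assemble the finite-rank machinery developed above. Since the two assertions ``$\J=\M(\supp)$'' and ``$\J$ is reflexive'' are equivalent by Proposition \ref{p_reflexbim}, it is enough to establish the equality $\J=\M(\supp)$, after which reflexivity is automatic.

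First I would dispose of the easy containment: for $T\in\J$ and $E\in\E$ one has $TE\subseteq[\J E]=\supp(E)$, so $\J\subseteq\M(\supp)$; thus all the content lies in the reverse inclusion. The key preliminary observation is that $\M(\supp)$ is itself weakly closed. Indeed, by Proposition \ref{p_phiJ} the function $\supp$ is an admissible support function, and it fixes $\{0\}$ since $\supp(\{0\})=[\J\{0\}]=\{0\}$; hence Proposition \ref{corrinj1}(iii) gives $\Phi_{\M(\supp)}=\supp$, so that $\M(\supp)=\M(\Phi_{\M(\supp)})$ and Proposition \ref{p_reflexbim} shows $\M(\supp)$ is a reflexive, and therefore weakly closed, $\T(\E)$-bimodule.

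Now I would apply Corollary \ref{weakspan} to the two weakly closed bimodules $\J$ and $\M(\supp)$, obtaining $\J=\overline{\J_0}^{\text{WOT}}$ and $\M(\supp)=\overline{(\M(\supp))_0}^{\text{WOT}}$. By Lemma \ref{rankoneban} the bimodules $\J$ and $\M(\supp)$ contain exactly the same finite rank operators, i.e. $\J_0=(\M(\supp))_0$, so
$$
\J=\overline{\J_0}^{\text{WOT}}=\overline{(\M(\supp))_0}^{\text{WOT}}=\M(\supp),
$$
which is the desired identity; reflexivity of $\J$ then follows at once from Proposition \ref{p_reflexbim}.

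I do not expect a genuine obstacle here, since the substantive work — the finite-rank decomposition of Theorem \ref{finiteranksum} and the weak-density argument of Lemma \ref{rankoneban} — has already been carried out. The only point needing attention is the verification that $\M(\supp)$ is weakly closed, so that Corollary \ref{weakspan} may legitimately be invoked for it as well as for $\J$; this is precisely where the reflexivity characterisation of Propositions \ref{p_reflexbim} and \ref{corrinj1} is used.
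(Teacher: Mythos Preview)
Your proof is correct and follows essentially the same route as the paper: both arguments combine Lemma \ref{rankoneban} (equality of finite rank operators in $\J$ and $\M(\supp)$) with Corollary \ref{weakspan} applied to each of the two weakly closed bimodules. The only difference is that you justify the weak closedness of $\M(\supp)$ via reflexivity through Propositions \ref{p_reflexbim} and \ref{corrinj1}, whereas the paper simply asserts it; in fact this is immediate from the definition \eqref{01}, since each condition $TE\subseteq\Phi(E)$ is preserved under WOT limits.
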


\begin{proof}
The bimodules $\J$ and $\mathcal M(\Phi_\J)$ are both weakly closed $\T(\E)$-bimodules. 
Since, by Lemma \ref{rankoneban}, $\J$ and $\mathcal M(\Phi_\J)$ have the same set of finite rank operators, it follows from  Corollary \ref{weakspan} that $\J$ and $\mathcal M(\Phi_{\J})$ coincide.
\end{proof}

\section{Essential support functions}\label{s_essbim}

  We define below the key notions of this section, those of essential support function and    admissible support function  pair, and show how  $\T(\E)$-bimodules determine and are determined by  essential support functions and  admissible support function  pairs. It should be noted that, although our definitions of essential support function and admissible support function  pair are inspired  by those  in \cite{DDH} for the Hilbert space setting, they are not quite a generalisation. 
  To mention one of the differences, notwithstanding   each essential support function $\Psi$ determining a largest $\T(\E)$-bimodule whose essential support function is $\Psi$, the latter is not necessarily closed, unlike its Hilbert space counterpart  (see Example \ref{exa}).
   It is also the case that  each essential support function   determines a smallest norm closed $\T(\E)$-bimodule whose essential support funtion is $\Psi$ (see Theorem \ref{p_bimodess}).
  
  Given an admissible support function  pair $(\Phi, \Psi)$, we find the largest $\T(\E)$-bimodule whose   support function pair is precisely $(\Phi, \Psi)$, if the nest $\E$ has the so-called $p$-property. As to finding the minimal bimodule corresponding to $(\Phi, \Psi)$, we make some headway for nests having the $p_\infty$-property to be defined below. In this setting, we are able to construct a bimodule which we conjecture to be minimal amongst the norm closed $\T(\E)$-bimodules having $(\Phi, \Psi)$ as its support function pair.  Although each of these results be obtained restricting the nests to a class, it is also the case that the $p$-property is enjoyed by a vast class of nests whereas the $p_\infty$-property corresponds to a restriction already existing in  Hilbert space nest algebras (see \cite[Section 2]{DDH}).\\

Given a nest $\E$ in $X$, let $\E_f$ denote the set of elements $N\in\E$ such that $0<\dim(N/N_-)<\infty$, and let $\E_{\infty}=\E\backslash \E_f$. Here, for subspaces $N$ and $M$ of $X$, we adopt the notation 
$$M/N=\{m+N:m\in M\}.
$$ 

An \emph{essential support function} on $\E$ is a support function $\Psi$ such that, for all $N, N_1,N_2\in\E$ with  $N_1\subseteq N_2$, 
\begin{equation}\label{B}
\Psi(N)\in\E_f\Rightarrow \Psi(N)=\Psi(N)_+;
\end{equation}
\begin{equation}\label{A}
	 \dim\left(N_2/N_1\right)<\infty \Rightarrow \Psi(N_2)=\Psi(N_1).
	 \end{equation}

A \emph{support function pair} consists of a pair $(\Phi,\Psi)$ of support functions on $\E$, where $\Phi$ is an admissible support function with $\Phi(\{0\})=\{0\}$ and  $\Psi$ is an essential support function with $\Psi\le \Phi$. The pair $(\Phi,\Psi)$ is said to be an \emph{admissible support function pair} on $\E$ if, for all $N\in \E$,
\begin{equation}\label{B'}
\Psi(N)\in\E_f \Rightarrow \Psi(N)\subset \Phi(N).
\end{equation}

We aim now  to establish a correspondence between  bimodules and essential support functions. We begin with the definitions of the essential support function and  support function pair of a bimodule.

Given a $\T(\E)$-bimodule $\J$, the \emph{essential support function of $\J$} is the function $\suppe\colon\E\to\E$  defined, for all $N\in\E$,  by 
\begin{equation}\label{suppedef}
\suppe(N)=\wedge\{L\in\E:\dim\left(TN/L)<\infty\quad\forall T\in\J\right\}.
\end{equation}
The \emph{support function pair of $\J$} is defined as the pair $(\supp,\suppe)$ of support functions.

\begin{remark}\label{rem2} 
Observe that, if $L\subset \suppe(N)$, then there exists $T\in\J$ such that $\dim(TN/L)=\infty$. It is also the case  that, if $L\supset\suppe(N)$, then $\dim(TN/L)<\infty$ for every $T\in\J$. 

The essential support functions defined in \cite{DDH} for Hilbert space nest algebras  localise where the non-compact operators are supported  in a given bimodule. Here, however, this is no longer so, although in some sense our essential  support functions be still {\sl sensitive} to the non-compact operators.
\end{remark}

\begin{lemma}\label{cor1}
Let $\E$ be a nest, let $\T(\E)$ be the corresponding nest algebra and  let $\J$ be a $\T(\E)$-bimodule with essential support function $\suppe$. Let  $L,N\in\E$ be  such that $L_-\subset \suppe(N)$. Then, for every $f\in N_-^{\perp}$ and $x\in L$,  the operator $f\otimes x$ lies in $\J$ . 
\end{lemma}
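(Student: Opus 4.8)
The strategy is to reduce this to Proposition~\ref{p_aux}, which says that $f\otimes x\in\J$ whenever there is some $T\in\J$ with $TN\not\subseteq L_-$, $f\in N_-^\perp$ and $x\in L$. So the task is to produce such an operator $T$ from the hypothesis $L_-\subset\suppe(N)$. First I would observe that it suffices to treat the case $L_-\neq L$ is irrelevant: what matters is that $L_-$ sits \emph{strictly} below $\suppe(N)$, so by Remark~\ref{rem2} (the first observation there, with the role of ``$L$'' played by $L_-$), there exists $T\in\J$ with $\dim(TN/L_-)=\infty$. In particular $\dim(TN/L_-)\neq 0$, which is exactly the condition $TN\not\subseteq L_-$ needed to invoke Proposition~\ref{p_aux}.

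\textbf{Carrying it out.} Fix $f\in N_-^\perp$ and $x\in L$. Since $L_-\subset\suppe(N)$, apply the contrapositive of the defining formula \eqref{suppedef}: if for \emph{every} $T\in\J$ we had $\dim(TN/L_-)<\infty$, then $L_-$ would belong to the set over which the meet defining $\suppe(N)$ is taken, forcing $\suppe(N)\subseteq L_-$, contradicting $L_-\subset\suppe(N)$. Hence there exists $T\in\J$ with $\dim(TN/L_-)=\infty$, and a fortiori $TN\not\subseteq L_-$. Now Proposition~\ref{p_aux}, applied with this $T$ and with the subspaces $L$ and $N$ as given, yields directly that $f\otimes x\in\J$, since $f\in N_-^\perp$ and $x\in L$. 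As $f$ and $x$ were arbitrary, this completes the argument.

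\textbf{Where the work is.} There is essentially no obstacle here: the lemma is a straightforward corollary of Proposition~\ref{p_aux} once one unwinds the definition of $\suppe$. The only point requiring a moment's care is the logical manipulation of the meet in \eqref{suppedef} --- namely that $L_-$ failing to lie strictly below $\suppe(N)$ would be forced if all operators in $\J$ had finite-dimensional image of $N$ modulo $L_-$; this is immediate from the fact that the meet (intersection) of the relevant $L$'s is contained in any one of them. Everything else is a direct citation. One could also phrase it using Remark~\ref{rem2} verbatim instead of re-deriving the dichotomy, which shortens the write-up.
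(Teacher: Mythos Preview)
Your argument is correct and follows essentially the same route as the paper's own proof: from $L_-\subset\suppe(N)$ and the defining meet \eqref{suppedef} you extract $T\in\J$ with $\dim(TN/L_-)=\infty$, hence $TN\not\subseteq L_-$, and then invoke Proposition~\ref{p_aux}. The only difference is length---the paper dispatches this in three sentences---and a small editing slip in your plan (``the case $L_-\neq L$ is irrelevant'' reads as garbled); otherwise the mathematics is identical.
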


\begin{proof}
Since  
$$
\suppe(N)=\wedge\{L\in\E:\dim(TN/L)<\infty \quad\forall T\in\J\},
$$
 the inclusion $L_-\subset \suppe(N)$ implies that there exists $T\in\J$ such that $\dim(TN/L_-)=\infty$. In particular, $\dim(TN/L_-)\neq 0$ and, therefore,  $TN\not\subseteq L_-$. The result now follows from Proposition \ref{p_aux}.
\end{proof}

\begin{proposition}\label{proofess}
Let $\E$ be a nest, let $\T(\E)$ be the corresponding nest algebra and let $\J$ be a $\T(\E)$-bimodule. Then 
$(\supp,\suppe)$ is an admissible  support function pair on $\E$.
\end{proposition}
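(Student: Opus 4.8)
The plan is to verify the three defining conditions: that $(\supp, \suppe)$ is a genuine support function pair (i.e. $\supp$ is admissible with $\supp(\{0\}) = \{0\}$, $\suppe$ is an essential support function, and $\suppe \le \supp$), and then the admissibility condition \eqref{B'}. The first half is mostly bookkeeping: $\supp$ is an admissible support function by Proposition \ref{p_phiJ}, and $\supp(\{0\}) = [\J\{0\}] = \{0\}$ is immediate. For $\suppe$ one must check it is well-defined (the wedge in \eqref{suppedef} is taken over a non-empty family, since $X$ itself always qualifies, and lands in $\E$ by reflexivity of $\E$), order preserving (if $N \subseteq M$ then any $L$ working for $M$ works for $N$, so the wedge defining $\suppe(N)$ is over a larger family, giving $\suppe(N) \subseteq \suppe(M)$), and that $\suppe \le \supp$ (if $TN \subseteq [\J N] = \supp(N)$ for all $T \in \J$, then $\dim(TN/\supp(N)) = 0 < \infty$, so $\supp(N)$ belongs to the family defining $\suppe(N)$).

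Next I would verify that $\suppe$ satisfies the two conditions \eqref{B} and \eqref{A} in the definition of essential support function. For \eqref{A}: suppose $\dim(N_2/N_1) < \infty$ with $N_1 \subseteq N_2$; I must show $\suppe(N_2) = \suppe(N_1)$. One inclusion is order preservation; for the other, if $L \in \E$ satisfies $\dim(TN_1/L) < \infty$ for all $T \in \J$, then since $TN_2 \subseteq TN_1 + T(N_2)$ and $\dim(T N_2 / TN_1) \le \dim(N_2/N_1) < \infty$, we get $\dim(TN_2/L) < \infty$ as well, so $L$ also qualifies for $N_2$; hence $\suppe(N_2) \subseteq L$, and taking the wedge gives $\suppe(N_2) \subseteq \suppe(N_1)$. (Here one should be slightly careful with the quotient dimension estimates for possibly-infinite-dimensional subspaces, but the relevant quotients are all finite-dimensional, so standard linear algebra applies.) Condition \eqref{B} says that if $\suppe(N) \in \E_f$ then $\suppe(N) = \suppe(N)_+$; I expect this to follow from Lemma \ref{cor1}, which is exactly the tool built for this purpose — it shows that when $L_- \subset \suppe(N)$, many rank-one operators $f \otimes x$ with $x \in L$ lie in $\J$, and one leverages this to derive a contradiction if $\suppe(N) \in \E_f$ but $\suppe(N) \subset \suppe(N)_+$.

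Finally, for the admissibility condition \eqref{B'}: I must show that $\suppe(N) \in \E_f$ forces $\suppe(N) \subset \supp(N)$ (strict). Suppose toward a contradiction that $\suppe(N) = \supp(N) = [\J N]$ and that $L := \suppe(N) \in \E_f$, so $0 < \dim(L/L_-) < \infty$. Since $L_- \subset L = \suppe(N)$, Lemma \ref{cor1} applies with this $L$: for every $f \in N_-^\perp$ and every $x \in L$, the operator $f \otimes x$ lies in $\J$. This should let me produce an operator $T \in \J$ (a suitable finite sum, or use of the structure) with $TN \subseteq L$ but $\dim(TN/L_-)$ infinite — contradicting the meaning of $\suppe(N) = L$ (per Remark \ref{rem2}, $L \supset \suppe(N)$ would be needed for all quotients to be finite, but $L = \suppe(N)$ is the infimum, and there must be some $T$ witnessing $\dim(TN/L_-) = \infty$). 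The delicate point is that $\dim(L/L_-)$ is finite, so a single rank-one $f \otimes x$ with $x \in L$ only ever gives $\dim(TN/L_-) \le 1$; the infinitude must come from the fact that $L = [\J N]$ genuinely needs infinitely many "directions" from $\J N$ that do not already sit inside $L_-$, yet all of $\J N$ lies in $L$ whose quotient by $L_-$ is finite-dimensional. Reconciling "$\dim(L/L_-) < \infty$" with the existence of $T \in \J$ with $\dim(TN/L_-) = \infty$ while $TN \subseteq [\J N] \subseteq L$ is the crux — and it is precisely impossible, which is what yields the contradiction and hence \eqref{B'}. I expect \emph{this last step} — pinning down why $\suppe(N) \in \E_f$ is incompatible with $\suppe(N) = \supp(N)$ — to be the main obstacle, and it is where the interplay between the $[\,\cdot\,]$-closure in the definition of $\supp$ and the finite-codimension hypothesis on $\suppe(N)$ has to be exploited carefully.
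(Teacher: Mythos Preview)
Your outline for $\supp$ being admissible with $\supp(\{0\})=\{0\}$, for $\suppe$ being order-preserving, for $\suppe \le \supp$, and for condition \eqref{A} is correct and matches the paper. Your argument for \eqref{B'}, once one strips away the detour through Lemma~\ref{cor1}, is also essentially the paper's: since $L_- \subset \suppe(N)=L$, some $T\in\J$ has $\dim(TN/L_-)=\infty$ by the very definition of $\suppe(N)$ as an infimum, yet $TN\subseteq[\J N]=\supp(N)=L$ forces $\dim(TN/L_-)\le\dim(L/L_-)<\infty$, a contradiction. (The rank-one operators from Lemma~\ref{cor1} play no role here and only muddy the exposition.)

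The genuine gap is in \eqref{B}. Lemma~\ref{cor1} is not the right tool, and the paper does not invoke it. That lemma manufactures rank-one operators in $\J$, but any finite-rank $T$ satisfies $\dim(TN/L)<\infty$ automatically for every $L$, so such operators can never witness $\dim(TN/\suppe(N))=\infty$, which is precisely what is needed to conclude $\suppe(N)=\suppe(N)_+$. What actually works---and what the paper does---is the same dimension argument you already deployed for \eqref{B'}: since $\suppe(N)_-\subset\suppe(N)$, there is $T\in\J$ with $\dim(TN/\suppe(N)_-)=\infty$; if in addition $\dim(TN/\suppe(N))$ were finite for every $T\in\J$, then
\[
\dim(TN/\suppe(N)_-)\le \dim(TN/\suppe(N))+\dim(\suppe(N)/\suppe(N)_-)<\infty
\]
for every $T$, a contradiction. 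Hence some $T$ has $\dim(TN/\suppe(N))=\infty$, so $\suppe(N)$ does not belong to the family $\{L\in\E:\dim(TN/L)<\infty\ \forall T\in\J\}$, and its infimum is therefore $\wedge\{L:\suppe(N)\subset L\}=\suppe(N)_+$. In short, you had the correct mechanism in hand for \eqref{B'} but reached for the wrong lemma for \eqref{B}; the paper in fact establishes this key claim (that some $T$ satisfies $\dim(TN/\suppe(N))=\infty$) first, in the course of proving \eqref{B}, and then simply quotes it back to obtain \eqref{B'}.
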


\begin{proof}
It is easily seen that  $\suppe$  is a support function on $\E$. 
 We  show firstly that $\suppe$ satisfies property \eqref{B}. 
 Let $N$ be a subspace in $\E$ and suppose that $0<\dim(\suppe(N)/\suppe(N)_{-})<\infty$.

To prove that $\suppe(N)=\suppe(N)_{+}$, it is enough to show that 
\begin{equation}\label{phie}
\dim(TN/\suppe(N))=\infty, \qquad   \mbox{for some $T\in\mathcal J$}.
\end{equation} 
 To see this, suppose that \eqref{phie} holds and let  $L\in \E$.   Then $\dim(TN/L)<\infty$ for every $T\in\J$ if and only if $\suppe(N)\subset L$ and, consequently, 
$$
\suppe(N)=\wedge\{L\in\E:\dim\left(TN/L)<\infty\quad\forall T\in\J\right\}=\wedge\{L\in\E:\suppe(N)\subset L\}=\suppe(N)_+.
$$

We show now that \eqref{phie}  holds. Suppose  that $\dim(TN/\suppe(N))<\infty$ for all operators $T$ in the bimodule $\J$. Let $\{e_1+\suppe(N),...,e_k+\suppe(N)\}$ be a basis of the subspace $TN/\suppe(N)$ and observe that we have $TN=\suppe(N)+\text{span}\{e_1,...,e_k\}$. 
It follows that 
$$
\dim(TN/\suppe(N)_-)\le\dim(\suppe(N)/\suppe(N)_-)+\dim(\text{span}\{e_1,...,e_k\}/\suppe(N)_-)<\infty.
$$
 But this contradicts the definition of $\suppe(N)$, since $\suppe(N)_-\subset \suppe(N)$.

We show now that \eqref{A} holds. Let $N_1,N_2\in \E $ be such that $N_1\subset N_2$, $\dim(N_2/N_1)<\infty$. By \eqref{suppedef}, it is enough to show that, whenever $L\in\E, T\in\mathcal J$ are such that  and $\dim(TN_1/L)<\infty$, then $\dim(TN_2/L)<\infty$. But, if $\{b_1+N_1,...,b_r+N_1\}$ is a basis of $N_2/N_1$, then $N_2=N_1+\text{span}\{b_1,...,b_r\}$ and, therefore, 
$$
\dim(TN_2/L)\le\dim(TN_1/L)+\dim(T(\text{span}\{b_1,...,b_r\})/L)<\infty.
$$

We prove now that the conditions of \eqref{B'} apply. It is obvious that   $\supp(\{0\})=\{0\}$ and by Proposition \ref{p_phiJ}, $\supp$ is an admissible support function. 

 To see that  $\suppe\leq\supp$, we must show that, for any $N\in \E$, we have $\suppe(N)\subseteq\supp(N)$. Observe that, for all $N\in \E$,  $\supp(N)=[\mathcal JN]\supseteq TN$. Hence, we have  
 \begin{equation}\label{dimzero}
 \dim(TN/\supp(N))=0<\infty, \qquad \mbox{for all $N\in \E$},
 \end{equation}
  from which follows that $\suppe(N)\subseteq\supp(N)$. 

Finally, we prove that \eqref{B'} holds. Let $N\in \E$ be such that $\suppe(N)\in\E_f$. We have already shown in the previous paragraph that   $\suppe(N)\subseteq \supp(N)$. However, if  $\suppe(N)=\supp(N)$, then, by \eqref{phie}, we would have $\dim(TN/\supp(N))=\infty$, for some $T\in\J$, which cannot be, as seen in \eqref{dimzero}. 
\end{proof}

Given an essential support function $\Psi$ on a nest $\E$ in $\B(X)$, let $\mathcal M^e(\Psi)$ and $\mathcal M^0(\Psi)$ be  the subspaces of $\mathcal B(X)$ defined, respectively, by 
\begin{equation}\label{02}
\mathcal M^e(\Psi)=\{T\in\mathcal B(X):\dim\left(TN/L)<\infty\quad\forall N,L\in\E\text{ with } L_+\supset \Psi(N)\right\},
\end{equation}

\begin{equation}\label{10}
\mathcal M^0(\Psi)=\overline{\sum_{\substack{L,N\in\E \\ L_-\subset \Psi(N)}}{\text{span}\{f\otimes x:f\in N_-^{\perp},x\in L\}}}.
\end{equation}
(cf. \cite{DDH}).

\begin{theorem}\label{p_bimodess}
Let $\E$ be a nest, let $\T(\E)$ be the corresponding nest algebra and let $\Psi$ be an essential support function on $\E$. Then the following hold.
\begin{enumerate}
\item[(i)] $\mathcal M^e(\Psi)$ is a  $\T(\E)$-bimodule containing every $\T(\E)$-bimodule $\J$ whose  essential support function $\suppe$ coincides with $\Psi$.

\item [(ii)]  $\mathcal M^0(\Psi)$ is a $\T(\E)$-bimodule  contained in every norm closed $\T(\E)$-bimodule $\J$ for which  $\suppe=\Psi$. 
\end{enumerate}
\end{theorem}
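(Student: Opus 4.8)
The plan is to verify each of the four structural assertions — that $\mathcal M^e(\Psi)$ and $\mathcal M^0(\Psi)$ are $\T(\E)$-bimodules, that $\mathcal M^e(\Psi)$ contains every bimodule with essential support $\Psi$, and that $\mathcal M^0(\Psi)$ is contained in every norm closed such bimodule — treating (i) and (ii) in turn.

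For (i), I would first check that $\mathcal M^e(\Psi)$ is a bimodule. Take $T\in\mathcal M^e(\Psi)$, $A,B\in\T(\E)$, and fix $N,L\in\E$ with $L_+\supset\Psi(N)$. Since $AN\subseteq N$, one has $\dim(ATN/L)\le\dim(TN/L)<\infty$ (the image of $TN$ under $A$ lands in $TN$'s span modulo nothing worse), actually more carefully $\dim(ATN/L) \le \dim(TN/L)$ because $A$ induces a linear surjection $TN \to ATN$ which descends to the quotients; so $AT\in\mathcal M^e(\Psi)$. For $TB$, note $BN\subseteq N$ so $TBN\subseteq TN$, hence $\dim(TBN/L)\le\dim(TN/L)<\infty$, giving $TB\in\mathcal M^e(\Psi)$; linearity in $T$ is clear from the subadditivity of $\dim(\cdot/L)$ already used repeatedly in the proof of Proposition \ref{proofess}. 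Next, to show $\mathcal M^e(\Psi)$ contains every bimodule $\J$ with $\suppe=\Psi$: let $T\in\J$ and fix $N,L\in\E$ with $L_+\supset\Psi(N)=\suppe(N)$. By \eqref{suppedef} and Remark \ref{rem2}, $\dim(TN/M)<\infty$ for every $M\in\E$ with $M\supset\suppe(N)$; it remains to handle the boundary case $L$ with $L_+\supset\Psi(N)$ but possibly $L\not\supset\suppe(N)$, i.e.\ $\suppe(N)\subseteq L_+$ while $L\subset\suppe(N)$ is excluded only when $L_+=\suppe(N)$. If $L\supseteq\suppe(N)$ we are done; if $L\subset\suppe(N)$ then $L_+\supseteq\suppe(N)$ forces $\suppe(N)=L_+$, so $\suppe(N)\in\E_f$ only if $\dim(\suppe(N)/L)<\infty$ — but property \eqref{B} for the essential support function says $\suppe(N)=\suppe(N)_+$ when $\suppe(N)\in\E_f$, contradicting $L_+=\suppe(N)\supset L$ unless $\dim(\suppe(N)/L)=\infty$; comparing with $\dim(TN/\suppe(N))<\infty$ (valid since $\suppe(N)\supseteq\suppe(N)$, trivially) and $\dim(TN/L)\le\dim(TN/\suppe(N))+\dim(\suppe(N)/L)$ we need $\dim(\suppe(N)/L)<\infty$, which holds precisely because $L_+=\suppe(N)$ makes $\suppe(N)/L$ a single "jump" — and if that jump were infinite-dimensional then $\suppe(N)\in\E_\infty$, so \eqref{B} does not apply, but then $L_+=\suppe(N)$ with infinite jump still gives $\dim(TN/L)=\infty$, which would contradict $T\in\J$ only if... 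This boundary analysis is the delicate point and I expect it is where the hypothesis that $\Psi$ is an \emph{essential} support function (not merely a support function) is genuinely used, via \eqref{B}; the clean statement to extract is: $L_+\supset\Psi(N)$ together with \eqref{B} forces $L\supseteq\suppe(N)$ or $\dim(\suppe(N)/L)<\infty$, either way yielding $\dim(TN/L)<\infty$.

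For (ii), showing $\mathcal M^0(\Psi)$ is a bimodule amounts to checking that $\T(\E)$ multiplies each generating rank-one operator $f\otimes x$ (with $f\in N_-^\perp$, $x\in L$, $L_-\subset\Psi(N)$) back into the closed span: for $A\in\T(\E)$, $A(f\otimes x)=f\otimes Ax$ and $Ax\in AL\subseteq L$, so this is again a generator; for $(f\otimes x)A=(A^*f)\otimes x$, I need $A^*f\in N_-^\perp$, which follows since $AN_-\subseteq N_-$ gives $f(AN_-)=f(N_-)=\{0\}$; norm-closedness is built into the definition, so $\mathcal M^0(\Psi)$ is a norm closed $\T(\E)$-bimodule. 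Finally, to prove $\mathcal M^0(\Psi)\subseteq\J$ for every norm closed bimodule $\J$ with $\suppe=\Psi$: it suffices, by norm-closedness and the definition \eqref{10}, to show each generator $f\otimes x$ lies in $\J$. But with $f\in N_-^\perp$, $x\in L$, and $L_-\subset\Psi(N)=\suppe(N)$, this is \emph{exactly} the hypothesis of Lemma \ref{cor1}, which concludes $f\otimes x\in\J$. Thus (ii) reduces entirely to Lemma \ref{cor1} plus the routine bimodule and closure bookkeeping. The only real obstacle is the boundary/jump analysis in (i); everything else is a matter of assembling dimension-count inequalities and invocations of the lemmas already proved.
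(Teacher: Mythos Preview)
Your treatment of part (ii), and of the bimodule verification in part (i), is correct and essentially identical to the paper's: the paper also checks that $A(f\otimes x)B$ is again a generator of $\mathcal M^0(\Psi)$ and then invokes Lemma \ref{cor1} for the containment, and it handles $ATB\in\mathcal M^e(\Psi)$ by the same dimension--count (phrased as a contradiction rather than a direct surjection argument, but with the same content).

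The containment assertion in (i) is where your proposal stumbles. Your case analysis is confused: you consider a case $L\subset\suppe(N)$ which cannot occur (since $L_+\supset\Psi(N)$ is a \emph{strict} inclusion and nothing in the nest lies strictly between $L$ and $L_+$, one always has $\Psi(N)\subseteq L$), and you claim ``if $L\supseteq\suppe(N)$ we are done'', which is not immediate --- Remark \ref{rem2} only gives $\dim(TN/L)<\infty$ for $L\supset\suppe(N)$ strictly, leaving the case $L=\suppe(N)$ open. Your attempt to close this residual case via property \eqref{B} does not reach a conclusion, and in fact \eqref{B} is not what is needed. The clean argument is this: write $S=\{M\in\E:\dim(T'N/M)<\infty\ \text{for all } T'\in\J\}$, so $\suppe(N)=\wedge S$. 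If $\suppe(N)\notin S$ then every $M\in S$ lies strictly above $\suppe(N)$, whence $\suppe(N)_+\subseteq\wedge S=\suppe(N)$, forcing $\suppe(N)_+=\suppe(N)$. But in the residual case $L=\suppe(N)$ with $L_+\supset\Psi(N)=L$ we have $\suppe(N)_+\supset\suppe(N)$, so $\suppe(N)\in S$ after all, i.e.\ $\dim(TN/L)<\infty$. This is purely an order-theoretic fact about infima in a nest; the essential support function axioms play no role. The paper compresses all of this into one line (``follows immediately from \eqref{02}''), which is terse but justified.
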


\begin{proof}
(i) It is clear that $\mathcal M^e(\Psi)$ is a linear subspace of $\mathcal B(X)$. 

To see that $\mathcal M^e(\Psi)$ is indeed a bimodule let $A,B\in\mathcal T(\E)$, let $T\in\mathcal M^e(\Psi)$ and let $N,L\in\E\text{ be such that } L_+\supset \Psi(N)$. We shall show  that $\dim(ATBN/L)<\infty$. 

 Firstly, observe that $\dim(
ATBN/L)\le \dim(ATN/L)$, since $BN\subseteq N$. Hence,  it suffices to prove  that $\dim(ATN/L)<\infty$.  

 On the contrary, suppose that $\dim(ATN/L)=\infty$ and let $(x_n)$ be  a sequence $N$ such that $\{ATx_n+L:n\in\mathbb N\}$ is a linearly independent subset of $ATN/L$. Hence,  since $AL\subseteq L$, we  also have that$\{Tx_n+L:n\in\mathbb N\}$ is also a linearly independent subset of  $TN/L$. But, since $T\in\mathcal M^e(\Psi)$, this contradicts the fact that $\dim(TN/L)<\infty$.
 
 The remaining assertion in (i) follows immediately from \eqref{02}.

(ii) Clearly $\mathcal M^0(\Psi)$ is a linear subspace of $\mathcal B(X)$. Let $L,N\in\E$ be such that $L_-\subset\Psi(N)$ and let $f\in N_-^\perp$ and $x\in L$. 
To see that $\mathcal M^0(\Psi)$ is a bimodule, it is enough to show that $A(f\otimes x)B\in \mathcal M^0(\Psi)$, for every $A,B\in\mathcal T(\E).$ 
 
 The operator $A(f\otimes x)B$ coincides with  $g\otimes (Ax)$, where $g\in X^*$ is given by $g(y)=f(By)$. Since $AL\subseteq L$, we have that $Ax\in L$. 
 Moreover, since $BN_-\subseteq N_-$, we have that $f\in N_-^\perp$ implies that $g\in N_-^\perp$. But by Lemma \ref{cor1}. this shows that $g\otimes (Ax)\in \mathcal M^0(\Psi)$.
 
 By Lemma \ref{cor1}, $\mathcal M^0(\Psi)$ is contained in every norm closed bimodule $\J$ such that  $\suppe=\Psi$.  
\end{proof}

\begin{example}\label{exa}
 It is worth noticing that $\mathcal M^e(\Psi)$ need not be a closed subspace. Let $X=L^2([0,1])$ and  consider the continuous nest   $\mathcal E=\{N_t:t\in[0,1]\}$, with 
 $$
 N_t=\{f\in L^2([0,1])\colon f(s)=0\text{ a.e. on }[t,1]\}.
 $$
  Let  $\Psi$ be the essential support function on $\E$ such that $\Psi(N_t)=\{0\}$, if $0\le t\le \frac 12$,  and $\Psi(N_t)=X$, if $\frac 12 <t\le 1$.
 
  Having fixed a basis $\{e_k:k\in\mathbb N\}$  of $N_{\frac 12}$, define, for $n\in \mathbb N$,  the finite rank operators 
  $T_n$ to be the projections onto $\sspan\{e_1,...,e_n\}$ and let   $T=\lim_{n\to\infty} T_n$ be  the projection onto $N_{\frac 12}$. 
  
 The  sequence $(T_n)$ is contained in $\mathcal M^e(\Psi)$, since it consists of finite rank operators. However, that is not the case of its limit $T$. For example,  we have that 
 $$
 (N_{\frac 14})_+=N_{\frac 14}\supset \{0\}=\Psi(N_{\frac 12})
 $$
  but 
  $
 \dim (TN_{\frac 12}/N_{\frac 14})=\infty.
 $
 This shows that $\mathcal M^e(\Psi)$ is not closed, unlike 
 its    counterpart  defined only in the Hilbert space setting (cf. \cite{DDH}, p. 63). As we see, the present definition of essential support function is not quite a generalisation of that in \cite{DDH} yielding  notwithstanding similar results. 
\end{example}

In our ongoing investigation of essential support functions, admissible support function pairs and their associated bimodules, we shall restrict ourselves to the case where the nest has the so-called $p$-property, to be defined below.  However,  we shall see that this is not a severe restriction. As will be clear upon its definition, this property holds in a vast class of Banach space nest algebras being, in fact, automatic in the case of separable Hilbert spaces (see  \cite{BCW, MO}).

A nest $\E$ is said to have  the \emph{$p$-property} if the following holds. 
\begin{enumerate} 
\item[(i)]For $N\in\E$ with $N=N_-$, there exists a strictly increasing sequence $(N_k)$ in $\E$ such that $\vee N_k=N$.
\item[(ii)] For $N\in\E$  with $N=N_+$, there exists a strictly decreasing sequence $(N_k)$ in $\E$ such that $\wedge N_k=N$.
\end{enumerate}

\begin{lemma}\label{maxess}
Let $\E$ be a nest having the $p$-property. If $\Psi$ is an essential support function on $\E$, then $$\Phi^e_{\mathcal M^e(\Psi)}=\Psi.$$
\end{lemma}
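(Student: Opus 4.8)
The plan is to show the two inequalities $\Phi^e_{\mathcal M^e(\Psi)} \le \Psi$ and $\Phi^e_{\mathcal M^e(\Psi)} \ge \Psi$ separately, both evaluated on an arbitrary $N \in \E$. Recall from \eqref{suppedef} that
$$
\Phi^e_{\mathcal M^e(\Psi)}(N) = \wedge\{L\in\E : \dim(TN/L) < \infty \quad \forall T\in\mathcal M^e(\Psi)\}.
$$
For the inequality $\Phi^e_{\mathcal M^e(\Psi)}(N) \subseteq \Psi(N)$: I would unwind the definition \eqref{02} of $\mathcal M^e(\Psi)$, which says precisely that any $T \in \mathcal M^e(\Psi)$ satisfies $\dim(TN/L) < \infty$ whenever $L_+ \supset \Psi(N)$. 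Thus every such $L$ belongs to the set over which the meet defining $\Phi^e_{\mathcal M^e(\Psi)}(N)$ is taken, so $\Phi^e_{\mathcal M^e(\Psi)}(N) \subseteq \wedge\{L : L_+ \supset \Psi(N)\}$. Now I invoke the identity \eqref{01}, namely $\Psi(N) = \wedge\{L : \Psi(N) \subset L_+\}$, to conclude that this meet equals $\Psi(N)$ — modulo the borderline case where $\Psi(N) = \Psi(N)_+$, which requires a short separate argument, and here is exactly where the essential-support axiom \eqref{B} and the $p$-property enter (see below).

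For the reverse inequality $\Psi(N) \subseteq \Phi^e_{\mathcal M^e(\Psi)}(N)$: it suffices to exhibit, for every $L \in \E$ with $L \subset \Psi(N)$, a single operator $T \in \mathcal M^e(\Psi)$ with $\dim(TN/L) = \infty$; this forces $L$ out of the defining set for $\Phi^e_{\mathcal M^e(\Psi)}(N)$, hence $\Phi^e_{\mathcal M^e(\Psi)}(N) \not\subseteq L$, and taking the join over all such $L$ (using \eqref{01} again) gives $\Psi(N) \subseteq \Phi^e_{\mathcal M^e(\Psi)}(N)$. To build such a $T$: since $L \subset \Psi(N)$, choose a vector $w \in \Psi(N)$ with suitable support; using Lemma~\ref{perpspan} and Lemma~\ref{rankonenestalg} (or directly Lemma~\ref{bimodrankcar}) one produces rank-$1$ operators of the form $f \otimes w$ with $f \in N_-^\perp$ lying in $\mathcal M^e(\Psi)$ — note every finite-rank operator is automatically in $\mathcal M^e(\Psi)$, as Example~\ref{exa} observes. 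The genuine issue is producing an operator whose image of $N$ is \emph{infinite}-dimensional modulo $L$; for this I would take an infinite-dimensional subspace sitting inside $\Psi(N)$ but intersecting $L$ trivially (available precisely because $L \subset \Psi(N)$ and, when $\Psi(N) \in \E_f$, because axiom \eqref{B} forces $\Psi(N) = \Psi(N)_+$ so that $\dim(\Psi(N)/L) = \infty$), and assemble a (possibly infinite-rank) bounded operator mapping $N$ onto it while still respecting the finiteness constraint $\dim(T N'/L') < \infty$ for all $N', L'$ with $L'_+ \supset \Psi(N')$.

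The main obstacle is the last construction: verifying that the operator $T$ just built actually lies in $\mathcal M^e(\Psi)$, i.e.\ that it satisfies the \emph{global} finiteness condition \eqref{02} for \emph{all} pairs $(N', L')$ simultaneously, not merely the one pair $(N, L)$ we care about. This is where the $p$-property is essential: when $\Psi(N) = \Psi(N)_-$ (or dually $\Psi(N) = \Psi(N)_+$) one approximates $\Psi(N)$ from below (resp.\ above) by a strictly monotone sequence $(N_k)$ in $\E$ with $\vee N_k = \Psi(N)$ (resp.\ $\wedge N_k = \Psi(N)$), and builds $T$ as a sum $\sum_k f_k \otimes w_k$ with the $w_k$ chosen in successive "layers" $N_{k+1}$ modulo $N_k$ so that the partial images land in the right subspaces; the monotone exhaustion is what guarantees that for any fixed $L'$ with $L'_+ \supset \Psi(N')$ only finitely many layers escape $L'$, keeping $\dim(TN'/L')$ finite. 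Absent the $p$-property there may be no such sequential approximation and the operator $T$ need not exist, which is the structural reason the hypothesis is imposed. I would organize the proof so that the easy inequality is dispatched in a few lines and the bulk of the work — the layer-by-layer construction of $T$ and the verification of \eqref{02} — is carried out under the $p$-property, splitting into the cases $\Psi(N) = \Psi(N)_+$, $\Psi(N) = \Psi(N)_-$, and $\Psi(N)_- \subset \Psi(N) \subset \Psi(N)_+$ according to the local structure of the nest at $\Psi(N)$.
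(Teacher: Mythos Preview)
Your overall shape is right --- the easy inequality $\Phi^e_{\mathcal M^e(\Psi)}(N_0)\subseteq\Psi(N_0)$ is exactly as in the paper, and the hard direction does require constructing a single operator $T\in\mathcal M^e(\Psi)$ with $\dim(TN_0/L)=\infty$ for every $L\subset\Psi(N_0)$ --- but your construction has a genuine gap on the \emph{domain} side. You organise the case analysis and the use of the $p$-property entirely around the structure of the nest at $\Psi(N_0)$, choosing the vectors $w_k$ in layers there. But the functionals $f_k$ must be chosen according to the structure at $N_0$, and this is where the $p$-property is needed a second time. Concretely: you propose $f\in (N_0)_-^{\perp}$, but if $N_0=(N_0)_-$ then every such $f$ vanishes on $N_0$ and your operator satisfies $TN_0=\{0\}$, killing property (B). The paper handles this by casing first on $N_0$: when $N_0=(N_0)_-$ it uses the $p$-property to pick a strictly increasing $(N_k)$ with $\vee N_k=N_0$ and takes $f_k\in N_k^{\perp}\setminus N_{k+1}^{\perp}$; when $\dim(N_0/(N_0)_-)=\infty$ it takes a linearly independent family in $(N_0)_-^{\perp}\setminus N_0^{\perp}$; and when $N_0\in\E_f$ it invokes axiom \eqref{A} --- which you never use --- to pass to a nearby $N_1\in\E_\infty$ with $\Psi(N_1)=\Psi(N_0)$. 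Only \emph{after} this does one case on $\Psi(N_0)$ to place the $e_k$.

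This domain-side layering is also what makes the verification of $T\in\mathcal M^e(\Psi)$ work for $N'\subset N_0$: only finitely many $f_k$ are nonzero on such $N'$, so $TN'$ is finite-dimensional outright. Your argument (``only finitely many layers escape $L'$\,'') handles $N'\supseteq N_0$ but not $N'\subset N_0$. So the missing ingredient is a parallel case split on $N_0$, together with the use of axiom \eqref{A} for the case $N_0\in\E_f$; without these the operator you describe either annihilates $N_0$ or fails the global condition \eqref{02}. (Incidentally, your worry about a ``borderline case'' in the easy direction is unfounded: the identity $\wedge\{L:L_+\supset\Psi(N_0)\}=\Psi(N_0)$ is exactly \eqref{01} and needs no side argument.)
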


\begin{remark}\label{rem4}
It is worth pointing out that, by this lemma and  Theorem \ref{p_bimodess},  the set $\mathcal M^e(\Psi)$ is the maximal $\T(\E)$-bimodule with essential support function $\Psi$, whenever the nest $\E$ has the $p$-property.
\end{remark}

\begin{proof}

Fix  $N_0\in \E$ and, to simplify the notation, let $\phie=\Phi^e_{\mathcal M^e(\Psi)}$. We start by observing that $\phie(N_0)\subseteq \Psi(N_0)$. Indeed, by \eqref{02}, for every $L\in\E$ with $L_+\supset \Psi(N_0)$ and every $T\in\mathcal M^e(\Psi)$, we have $\dim(TN_0/L)<\infty$. Hence, by \eqref{suppedef}, we have  $\phie(N_0)\subseteq L$. It follows that 
$$\Phi^e(N_0)\subseteq\wedge_{L_+\supset \Psi(N_0)}L=\Psi(N_0).
$$ 
We show next that $\phie(N_0)\supseteq \Psi(N_0)$ which will end the proof. For that, it suffices to find an operator $T$ such that
\begin{itemize}
\item[(A)] $T\in\mathcal M^e(\Psi)$;
\item[(B)] $\dim(TN_0/L)=\infty, \text{ for every } L\subset \Psi(N_0).$
\end{itemize} 
In order to construct such an operator, we shall analyse below the three possible situations: 
  $N_0 ={N_0}_-$ (Case 1),  $\dim(N_0/{N_0}_-)=\infty$ (Case 2) and $0<\dim(N_0/{N_0}_-)<\infty$ (Case 3).

\noindent
{\sl Case 1} ($N_0 ={N_0}_-$): 
 Let $(N_k)$ be a strictly increasing sequence  in $\E$ such that $\vee N_k=N_0$, and choose a sequence  $(f_k)$ such that, for all $k\in \naturais$, we have  $f_k\in N_k^{\perp}\backslash N_{k+1}^{\perp}$.

{\sl Case 1.1} ($\Psi(N_0)=\Psi(N_0)_-$): 
 Let $(L_k)$ in $\E$ be a strictly increasing sequence such that $\vee L_k=\Psi(N_0)$. Fix a sequence $(e_k)$ in $L_{k+1}\backslash L_k$  such that, for all $k\in \naturais$, we have $\left\|f_k\right\|<\frac{\left\|e_k\right\|}{k^2}$. Let $T$ be the operator defined by the absolutely convergent series $T=\sum_{k= 1}^\infty{f_k\otimes e_k}$.
 We prove next that $T\in\mathcal M^e(\Psi)$. 
 
 Let $L,N\in\E$ be such that $L_+\supset \Psi(N)$. 
 If $N\subseteq N_1$, then $TN=\{0\}$ and, hence, $\dim(TN/L)=0<\infty$. 
 
 If $N_1\subset N\subset N_0$, then there exists an integer $k\ge 1$ such that $N_k\subset N\subseteq N_{k+1}$. Hence, for every $l\ge k+1$, we have $f_l(N)=\{0\}$. It follows that $TN\subset\span \{e_1,...,e_k\}$ and, consequently,  $\dim(TN/L)<\infty$.
 
Suppose now that $N\supset N_0$. By the monotonicity of $\Psi$, we have 
$
\Psi(N)\supseteq\Psi(N_0).
$
 Since 
 $$
 TN\subseteq\overline{\text{span}}\{e_k\colon k\in \naturais\}\subseteq \Psi(N_0),
 $$
  it  follows that $\dim(TN/L)\le\dim (TN/\Psi(N_0))=0$. Hence $T\in\mathcal M^e(\Psi)$. 
   
It remains to show that, for $L\in\E$ with $L\subset \Psi(N_0)$, we have $\dim(TN_0/L)=\infty$. 

Let $m$ be an integer such that $L\subset L_m$. The set $\{e_k+L\colon k\ge m\}$ is a linearly independent set in $TN_0/L$. 
Suppose that, on the contrary,   there exist scalars $a_1,...,a_n$, not all equal to zero, such that
$$a_1(e_{k_1}+L)+...+a_n(e_{k_n}+L)=0+L,
$$
where $m\le k_1<... <k_n$. We have then that $a_1e_{k_1}+...+a_ne_{k_n}$  lies in $L$.
Assuming without loss of generality that $a_n\neq 0$, it follows that 
$$
e_{k_n}=a_n^{-1}((a_1e_{k_1}+...+a_ne_{k_n})-(a_1e_{k_1}+...+a_{n-1}e_{k_{n-1}}))
$$
lies in $L_{k_n}$, since  $e_{k_1},...,e_{k_{n-1}}\in L_{k_n}$ and  $L\subset L_{k_n}$.
 But this contradicts the fact that  $e_{k_n}\in L_{k_n+1}\backslash L_{k_n},$  concluding the proof of this case.

{\sl Case 1.2} ($\dim(\Psi(N_0)/\Psi(N_0)_-)=\infty$):
 Fix a subset $\{e_k\colon k\in \naturais\}$ of $\Psi(N_0)$ such that 
 $\{e_k+\Psi(N_0)_-\colon k\in \naturais\}
 $ is a linearly independent set in $\Psi(N_0)/\Psi(N_0)_-$. Consider the operator $T=\sum_{k=1}^\infty{f_k\otimes e_k}$ defined as above. We can show similarly that $T\in\mathcal M^e(\Psi)$.

To see  that $\dim (TN_0/L)=\infty$, for every $L\subset \Psi(N_0)$, notice that 
$$
TN_0=\overline{\text{span}}\{e_k\colon k\in \naturais\}
$$
 and that 
 $\{e_k+L\colon k\in \naturais \}
 $
  is a linearly independent set in $TN_0/L$. 
  Indeed, if  $a_1,...,a_n$ are scalars such that 
  $$a_1(e_{k_1}+L)+...+a_n(e_{k_n}+L)=0+L,
  $$
  then $a_1e_{k_1}+...+a_ne_{k_n} \in \Psi(N_0)_-$. That is,
   $$
   a_1(e_{k_1}+\Psi(N_0)_-)+...+a_n(e_{k_n}+\Psi(N_0)_-)=0+\Psi(N_0)_-,
   $$
    yielding that the scalars  $a_1,...,a_n$ must all coincide with zero.

{\sl Case 1.3} ($0< \dim(\Psi(N_0)/\Psi(N_0)_-)<\infty$):
 Observe that in this case property \eqref{B} implies that $\Psi(N_0)=\Psi(N_0)_+$. 
 
 For any positive integer $k$, let $(L_k)$ be  a strictly decreasing sequence in $\E$ with   
 $
 \wedge L_k=\Psi(N_0).
 $
 and fix a sequence $(e_k)$ where, for each $k$, $e_k\in L_k\backslash L_{k+1}$. 
  We show next  that  the operator $T=\sum_{k\ge 1}{f_k\otimes e_k}$ lies in $\mathcal M^e(\Psi)$.

Let $L,N\in\E$ be such that $L_+\supset \Psi(N)$. If $N\subset N_0$, it can be seen similarly to Case 1.1 that $\dim(TN/L)<\infty$. 

If $N_0\subseteq N$, then $L_+\supset \Psi(N)$ implies $L\supset \Psi(N_0)$. Indeed, the last inclusion must be strict, since $L=\Psi(N_0)$ would imply 
$$\Psi(N_0)\subseteq \Psi(N)\subset L_+=\Psi(N_0)_+=\Psi(N_0),
$$ yielding a contradiction.
If $L_1\subseteq L$, then 
$$
\dim(TN/L)\le\dim(L_1/L)=0<\infty.
$$
 If $\Psi(N_0)\subset L\subset L_1$, then there exists a positive integer $k$ such that $L_{k+1}\subset L\subseteq L_k$. 
 It follows that, for all $l\in \naturais$ with $l\ge k+1$, the vector $e_l$ lies in  $L$. Hence
  $$
  \dim(TN/L)\le\dim(\span\{e_1,...,e_k\}/L)<\infty,
  $$ 
  as required.

Finally, we show that, for every $L\subset \Psi(N_0)$, we have $\dim(TN_0/L)=\infty$. We check that $\{e_k+L:k\in \naturais\}$ is a linearly independent set in $TN_0/L$. 

Suppose on the contrary that for integers $1\le k_1<... <k_n$ and  scalars $a_1,...,a_n$ not all zero, we had
$$a_1(e_{k_1}+L)+...+a_n(e_{k_n}+L)=0+L.
$$
If we assume without loss of generality that $a_1\neq 0$, then 
\begin{equation}\label{07}
e_{k_1}=a_1^{-1}((a_1e_{k_1}+...+a_ne_{k_n})-(a_2e_{k_2}+...+a_{n}e_{k_n}))
\end{equation}
Since   $e_{k_2},...,e_{k_{n}}\in L_{k_2}$,  
 $L\subset L_{k_2}$ and, as seen above, $a_1e_{k_1}+...+a_ne_{k_n}\in L$, it follows from \eqref{07} that $e_{k_1}$ lies in  $L_{k_2}$, contradicting the assumption that $e_{k_1}\in L_{k_1}\backslash L_{k_2}.$ \\

\noindent
{\sl Case 2} ($\dim(N_0/(N_0)_-)=\infty$): In this case, it suffices to choose a linearly independent set $\{f_k\colon k\in \naturais \}$ in $(N_{0})_-^{\perp}\backslash N_0^{\perp}$ and use a reasoning similar to Case 1.\\

\noindent
{\sl Case 3} ($0<\dim(N_0/({N_0})_-)<\infty$): Define 
 $$N_1=\wedge\{M\in\E:M\subset N_0\text{ and }\dim(N_0/M)<\infty\}.
 $$
If $\dim(N_0/N_1)<\infty$, then $N_1\in\E_{\infty}$. Hence, similarly to the cases above, we have $\phie(N_1)=\Psi(N_1)$. Moreover,   property \eqref{A}   yields $\phie(N_0)=\phie(N_1)$ and $\Psi(N_0)=\Psi(N_1)$. But then $\phie(N_0)=\Psi(N_0)$, as required.

If, on the other hand,  $\dim(N_0/N_1)=\infty$, let $\{f_k\colon k\in \naturais\}$  be a linearly independent set in $(N_{1})^{\perp}\backslash N_0^{\perp}$, and consider the two possible situations 
(i)  $\Psi(N_0)\in\E_{\infty}$ and 
 (ii) $\Psi(N_0)\in\E_{f}$.\\

\noindent
(i) $\Psi(N_0)\in\E_{\infty}$.

(i.1) If $\dim(\Psi(N_0)/\Psi(N_0)_-)=\infty$, choose a subset $\{e_k\colon k\in \naturais\}$ of $\Psi(N_0)$ such that 
$\{e_k+\Psi(N_0)_-\colon k\in \naturais\}
$ is linearly independent in $\Psi(N_0)/\Psi(N_0)_-$.  Consider again the operator 
$T=\sum_{k\ge 1}{f_k\otimes e_k}
$   which we show next to lie in $M^e(\Psi)$.

Let $L,N\in\E$ be such that $L_+\supset \Psi(N)$. 
It is clear that, if $N\subseteq N_1$,  then $TN=\{0\}$ and $\dim (TN/L)=0$. 

Suppose now that $N_1\subset N\subseteq N_0$. Since $\dim(N_0/N)<\infty$, by \eqref{A}, we have $\Psi(N)=\Psi(N_0)$. 
 Hence 
 $$
 L\supseteq \Psi(N)=\Psi(N_0)
 $$ 
 and it follows that 
 $$
 \dim(TN/L)\le\dim(TN/\Psi(N_0))=0.
 $$
  Finally, if $N\supset N_0$,  a reasoning similar to that of Case 1.1 yields $\dim(TN/L)=0$. We have shown that $T\in\mathcal M^e(\Psi)$.

To finish this case just notice that $T$ satisfies property (B) for the same reasons as those in Case 1.2.

(i.2) If  on the other hand, we have $\Psi(N_0)\in\E_{\infty}$ and $\Psi(N_0)=\Psi(N_0)_-$, 
let  $(L_k)$ be a strictly increasing sequence    in $\E$ such that $\vee L_k=\Psi(N_0)$.  Consider again the operator $T=\sum_{k= 1}^\infty{f_k\otimes e_k}$ where, for each $k$, $e_k\in L_{k+1}\backslash L_{k}$. 

A reasoning similar to that in (i.1) shows that $T\in\mathcal M^e(\Psi)$.  Similarly to the final considerations in Case 1.1, 
we obtain that $T$ satisfies property (B).\\

\noindent
(ii) $\Psi(N_0)\in\E_{f}$.

Property \eqref{B} implies that $\Psi(N_0)=\Psi(N_0)_+$. Let  $(L_k)$ be a strictly decreasing sequence    in $\E$ such that $\wedge L_k=\Psi(N_0)$, and let $T=\sum_{k= 1}^\infty{f_k\otimes e_k}$  where, for each $k$, $e_k\in L_{k}\backslash L_{k+1}$.


As in (i.1), 
 we have $T\in\mathcal M^e(\Psi)$. Moreover, a reasoning similar to that in Case 1.3  yields that $T$ satisfies property (B).
 
%

The proof is complete.
\end{proof}

\begin{theorem}\label{t_maxpair}
Let $\E$ be a nest having the  $p$-property, let $\T(\E)$ be the corresponding nest algebra and let $(\Phi,\Psi)$ be an admissible support function pair  on $\E$. Then 
\begin{equation}\label{05}
\mathcal M(\Phi,\Psi)=\mathcal M(\Phi)\cap\mathcal M^e(\Psi)
\end{equation}
 is the largest $\mathcal T(\E)$-bimodule whose support function pair $(\supm,\supme)$ coincides with $(\Phi,\Psi)$.
\end{theorem}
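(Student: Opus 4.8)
The plan is to show that $\M(\Phi,\Psi) = \M(\Phi)\cap\M^e(\Psi)$ has support function pair exactly $(\Phi,\Psi)$, and that it contains any bimodule with that same support function pair. First I would verify that $\M(\Phi,\Psi)$ is indeed a $\T(\E)$-bimodule: this is immediate, since $\M(\Phi)$ is a bimodule by construction and $\M^e(\Psi)$ is a bimodule by Theorem \ref{p_bimodess}(i), and the intersection of two bimodules is a bimodule.

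Next I would compute the support function pair of $\M(\Phi,\Psi)$. For the essential support, since $\M(\Phi,\Psi)\subseteq \M^e(\Psi)$ and essential support is monotone under inclusion of bimodules, Lemma \ref{maxess} (using the $p$-property) gives $\supme \le \Phi^e_{\M^e(\Psi)} = \Psi$. For the reverse inequality $\supme \ge \Psi$, I would revisit the operators $T$ constructed in the proof of Lemma \ref{maxess}: for each fixed $N_0$, that proof produces an operator $T\in\M^e(\Psi)$ with $\dim(TN_0/L)=\infty$ for every $L\subset\Psi(N_0)$, and a key point is that $TN_0\subseteq\Psi(N_0)\subseteq\Phi(N_0)$ and more generally $TN\subseteq\Phi(N)$ for all $N$ — this last needs checking case by case but follows because in each case $TN$ lands inside a span of vectors sitting in $\Psi(N_0)$ (or in $\Psi(N_0)_-$), combined with $\Psi\le\Phi$ and admissibility of $\Phi$. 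Granting that $T\in\M(\Phi)$ as well, we get $T\in\M(\Phi,\Psi)$ witnessing $\supme(N_0)\supseteq\Psi(N_0)$, hence $\supme = \Psi$. For the ordinary support function, note $\supm \le \Phi$ because $\M(\Phi,\Psi)\subseteq\M(\Phi)$ and $\Phi_{\M(\Phi)}=\Phi$ by Proposition \ref{corrinj1}(iii) (with monotonicity of $\Phi_\J$ in $\J$). The reverse $\supm \ge \Phi$ requires producing, for each $N$, enough operators in $\M(\Phi,\Psi)$ so that $[\M(\Phi,\Psi)N]\supseteq\Phi(N)$; here I would use rank-one operators: for $x\in\Phi(N)$ and a suitable functional $f$, Lemma \ref{bimodrankcar} shows $f\otimes x\in\M(\Phi)$, and since such finite-rank operators lie in $\M^e(\Psi)$ automatically, they lie in $\M(\Phi,\Psi)$; taking closed span and using admissibility/left-continuity of $\Phi$ recovers all of $\Phi(N)$, arguing much as in the proof of Proposition \ref{corrinj0}.

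Finally, for maximality, let $\J$ be any $\T(\E)$-bimodule with $(\supp,\suppe)=(\Phi,\Psi)$. From $\supp=\Phi$ and the definition $\supp(E)=[\J E]$ we get $TE\subseteq[\J E]=\Phi(E)$ for every $T\in\J$ and $E\in\E$, so $\J\subseteq\M(\Phi)$. From $\suppe=\Psi$ together with Remark \ref{rem2}, for any $N,L\in\E$ with $L\supset\Psi(N)$ we have $\dim(TN/L)<\infty$ for all $T\in\J$; since $L_+\supset\Psi(N)$ forces $L\supseteq\Psi(N)$ and in fact $L\supset\Psi(N)$ or $L=\Psi(N)$ with $\Psi(N)=\Psi(N)_+$, a short argument handling the boundary case shows $\dim(TN/L)<\infty$ whenever $L_+\supset\Psi(N)$, i.e. $\J\subseteq\M^e(\Psi)$. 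Hence $\J\subseteq\M(\Phi)\cap\M^e(\Psi)=\M(\Phi,\Psi)$.

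The main obstacle I anticipate is the reverse inequality $\supme(N_0)\supseteq\Psi(N_0)$: one cannot simply quote Lemma \ref{maxess}, because that lemma works inside $\M^e(\Psi)$, and one must check that the witnessing operators $T$ built there actually satisfy $TN\subseteq\Phi(N)$ for all $N$, so that they belong to the smaller bimodule $\M(\Phi)$ as well. This is where the hypothesis that $(\Phi,\Psi)$ is an \emph{admissible} support function pair — in particular condition \eqref{B'}, $\Psi(N)\in\E_f\Rightarrow\Psi(N)\subset\Phi(N)$ — is needed, to guarantee there is enough room in $\Phi(N_0)$ beyond $\Psi(N_0)$ in the finite-dimensional-jump case so that one can also produce operators detecting the full value of $\supm$ without disturbing $\supme$. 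I would organize the verification following the same Case 1/2/3 split as in Lemma \ref{maxess}, checking in each case the extra containment $T\in\M(\Phi)$.
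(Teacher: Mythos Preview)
Your plan matches the paper's proof almost exactly: the bimodule and maximality claims are immediate (the paper simply invokes Theorem~\ref{p_bimodess}(i) for $\J\subseteq\M^e(\Psi)$, which is cleaner than your boundary-case argument, though yours is also fine); the equality $\supm=\Phi$ is obtained in the paper via the sandwich $\M(\Phi)_0\subseteq\M(\Phi,\Psi)\subseteq\M(\Phi)$ together with Proposition~\ref{samesupport}, which is your rank-one argument in compressed form; and the heart of the matter, $\supme=\Psi$, is handled by revisiting the operators $T$ from Lemma~\ref{maxess} and checking the extra condition $T\in\M(\Phi)$ case by case.

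One point deserves correction. Your assertion that ``in each case $TN$ lands inside a span of vectors sitting in $\Psi(N_0)$'' is not true in the cases where $\Psi(N_0)\in\E_f$ (Case~1.3 and the analogous part of Case~3). There the vectors $e_k$ from Lemma~\ref{maxess} are chosen in $L_k\setminus L_{k+1}$ with $(L_k)$ \emph{decreasing} to $\Psi(N_0)$, so the range of $T$ lies strictly above $\Psi(N_0)$. Consequently one cannot merely \emph{verify} that the Lemma~\ref{maxess} operator lies in $\M(\Phi)$; one must \emph{rebuild} it. The paper does this by re-selecting the sequences: in Case~1.1 it arranges $L_k\subset\Phi(N_{k-1})$ (possible by left-continuity of $\Phi$), and in Case~1.3 it uses \eqref{B'} to find $N_1\subset N_0$ with $\Psi(N_0)\subset\Phi(N_1)$ and then takes $L_1=\Phi(N_1)$; similarly in Case~3 with $\Psi(N_0)\in\E_f$ it intertwines a decreasing sequence $(M_k)$ down to $N_1$ with the $(L_k)$ so that $L_k\subseteq\Phi(M_{k+1})$. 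You correctly anticipate that \eqref{B'} is the key ingredient here, so your plan is sound---just be aware that the case-by-case work involves adjusting the construction, not only checking a property of the existing one.
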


\begin{proof}
Clearly $\mathcal M(\Phi,\Psi)$ is a bimodule and it contains every bimodule with support $\Phi$ and essential support $\Psi$. Let $\mathcal \B(X)_0$ the set of finite rank operators on $X$. 
Since $\mathcal \B(X)_0\subseteq \mathcal M^e(\Psi)$, we have 
$$
\mathcal M(\Phi)_0=\mathcal M(\Phi)\cap \mathcal \B(X)_0\subseteq\mathcal M(\Phi,\Psi)\subseteq \mathcal M(\Phi).
$$
 By Proposition \ref{samesupport}, the bimodule $\mathcal M(\Phi)_0$ has the same support function as its closure $\mathcal M(\Phi)$.
 Hence  the support $\supm$  coincides with $\Phi$.

 It remains to show that 
  $\Phi^e_{\mathcal M(\Phi,\Psi)}=\Psi$.
 To simplify the notation, let $\phie=\Phi^e_{\mathcal M(\Phi,\Psi)}$. 
 
 Notice that, since $\mathcal M(\Phi,\Psi)\subseteq\mathcal M^e(\Psi)$, we have that $\phie\leq\Psi$. We see next that  the reverse inequality holds.
 
For a fixed $N_0\in \E$, we need to show that  $\phie(N_0)\subseteq\Psi(N_0)$.  For that it is wnough to find an operator $T$ such that
\begin{itemize}
\item[(A)] $T\in\mathcal M^e(\Psi)$;
\item[(B)] $\dim(TN_0/L)=\infty, \text{ for every } L\subset \Psi(N_0);$
\item[(C)] $T\in\mathcal M(\Phi)$.
\end{itemize}
 We shall consider the operator $T$ to be as in the proof of Lemma \ref{maxess} and shall adopt here also a  notation and case labelling  as in that proof, 
 unless otherwise specified. Referring to that proof,  it is clear that (A) and (B) will hold similarly.  Hence we shall only show that (C) holds. That is, we fix $N\in\E$ and prove that  $TN\subseteq\Phi(N)$. 

Observe that, given $L\subset \Psi(N_0)$, it is always possible to find $M\in \E$ with $M\subset N_0$  and $L\subset \Phi(M)$.  This is a consequence of the fact that 
$\Psi(N_0)\subseteq\Phi(N_0)
$ and  of the left continuity of $\Phi$.


\noindent
{\sl Case 1} ($N_0 ={N_0}_-$): It is clear that, if  $N\subseteq N_1$, then $TN=\{0\}\subseteq\Phi(N)$.
 Suppose then that $N_1\subset N$. 

{\sl Case 1.1} ($\Psi(N_0) ={\Psi(N_0)}_-$): Choose the sequence $(N_k)$ to be such  that $L_{k}\subset \Phi(N_{k-1})$, whenever $k\ge 2$. 
It follows that, if $N_1\subset N\subset N_0$, then there exists an integer $k\ge 1$ such that $N_k\subset N\subseteq N_{k+1}$, and therefore 
$$
TN\subseteq \text{span}\{e_1,...,e_k\}\subseteq L_{k+1}\subset \Phi(N_k)\subseteq\Phi(N).
$$
If on the other hand $N_0\subseteq N$, then 
 $$
 TN\subseteq \Psi(N_0) \subseteq\Phi(N_0)\subseteq\Phi(N).
 $$

 {\sl Case 1.2} ($\dim (\Psi(N_0)/{\Psi(N_0)}_-)=\infty$): We just need to choose $N_1$ such that ${\Psi(N_0)}_-\subset \Phi(N_1)$ and (i) follows similarly.

 {\sl Case 1.3} ($0<\dim (\Psi(N_0)/{\Psi(N_0)}_-)<\infty$): In this case we know that $\Psi(N_0)=\Psi(N_0)_+\subset \Phi(N_0)$. By the left continuity of $\Phi$, we can choose the subspace $N_1\subset N_0$ to be such that $\Psi(N_0)\subset \Phi(N_1)\subset\Phi(N_0)$. Hence, in this case we let  the linear space $L_1$ to be $\Phi(N_1)$. It follows that 
 $$TN=\overline{\text{span}}\{e_1,e_2,...\}\subseteq L_1=\Phi(N_1)\subseteq\Phi(N).
 $$

 \quad\\
\noindent
 {\sl Case 2} ($\dim(N_0/{N_0}_-)=\infty$): In this case, we can use a reasoning similar to the case above to prove (i).

 \quad\\
\noindent
 {\sl Case 3} ($0<\dim(N_0/{N_0}_-)<\infty$):  
Recall that, here, 
$$
N_1=\wedge\{M\in\E:M\subset N_0\text{ and }\dim(N_0/M)<\infty\}.
$$
If $\dim(N_0/N_1)<\infty$, then $N_1\in\E_{\infty}$. Hence, similarly to the cases above, we have $\phie(N_1)=\Psi(N_1)$. Moreover,   property \eqref{A}   yields $\phie(N_0)=\phie(N_1)$ and $\Psi(N_0)=\Psi(N_1)$. But then $\phie(N_0)=\Psi(N_0)$, as required.

Hence we suppose that $\dim (N_0/N_1)=\infty$.
If $\Psi(N_0)\in\E_{\infty}$, condition (C) is immediately verified. In fact,
if $N\subseteq N_1$, then $TN=\{0\}\subseteq\Phi(N)$.   If we have $N_1\subset N\subset N_0$, 
$$
TN\subset \Psi(N_0)=\Psi(N)\subseteq\Phi(N)
$$ (here we used property \eqref{A}). 
Finally, if $N_0\subseteq N$, then 
$$
TN\subset \Psi(N_0)\subseteq\Psi(N)\subseteq\Phi(N).
$$

Now suppose that $\Psi(N_0)\in\E_f$, that is, $0<\dim(\Psi(N_0)/\Psi(N_0)_-)<\infty$.   
  Here the choice of the sequences $(f_k)$ and  $(L_k)$   must be slightly different. Notice that, by the definition of $N_1$ and the assumption $\dim(N_0/N_1)=\infty$, we have that $N_1=(N_1)_+$. Hence choose a strictly decreasing sequence $(M_k)$ in $\E$ 
   with  $\wedge M_k=N_1$.

 Notice that for every integer $k\ge1$, we have $\dim(N_0/M_k)<\infty$. By properties \eqref{A} and \eqref{B'} of the pair $(\Phi,\Psi)$, we have 
 $$
\Psi(N_0)=\Psi(M_k)\subset\Phi(M_k).
 $$
 We may therefore assume, by means of a subsequence, that $L_{k}\subseteq \Phi(M_{k+1})$, for every integer $k\ge 1$.

Let $f_k\in M_{k+1}^{\perp}\backslash M_{k}^{\perp}$ and $e_k\in L_{k}\backslash L_{k+1}$, for all integer $k\ge 1$. Conditions (A) and (B) can be shown to hold Just like in the proof of Lemma \ref{maxess}. We show now that (C) holds.

If $N\subseteq N_1$, then $TN=\{0\}\subseteq\Phi(N)$. If $N_1\subset N\subseteq M_1$, then there exists an integer $k\ge 1$ such that $M_{k+1}\subset N\subseteq M_{k}$ and, therefore, 
$$
TN\subseteq\overline{\text{span}}\{e_j:j\ge k\}\subseteq L_{k}\subseteq\Phi(M_{k+1})\subseteq\Phi(N).
$$
 Finally, if $M_1\subset N$, then $TN\subseteq L_1\subseteq\Phi(M_2)\subseteq \Phi(N),$
 ending the proof.
\end{proof}

Having characterised the largest bimodule with a given support function pair $(\Phi,\Psi)$, we now turn to the problem of finding the smallest bimodule whose  support function pair is  $(\Phi,\Psi)$. 
 To tackle this problem, we assume the nest to satisfy the so-called $p_\infty$-property. The same problem was addressed in \cite{DDH} in similarly restrictive circumstances. In fact, in \cite{DDH}, only infinite multiplicity nests were considered (see \cite[Section 2]{DDH}).

Although we do not give a complete answer to this problem, we  make some headway, nevertheless. We construct a  (possibly not  closed)  bimodule with support function pair $(\Phi,\Psi)$ (see \eqref{08}), which we conjecture to be contained in every closed bimodule with support function pair ($\Phi,\Psi$).

 We say that a nest $\E$ has \emph{$p_{\infty}$-property}   if, for each $N\in\E$ with $N_-\subset N$, we have   $$\dim(N/N_-)=\infty
 $$
(cf. with the restriction made to the nests in \cite[Section 2]{DDH}).

\begin{remark}\label{rem7}
Observe  that for a nest having the $p_{\infty}$-property, conditions  \eqref{B}, \eqref{A} and \eqref{B'}  hold trivially.
Then it is also the case  that every support function on $\E$ is an essential support function and, for a pair $(\Phi,\Psi)$ of support functions to be admissible it is only required that $\Phi$ be admissible, $\Psi\le\Phi$ and $\Phi(\{0\})=\{0\}$.
\end{remark}

\begin{theorem}\label{miness}
Let $\E$ be a nest having the  $p_{\infty}$-property and let $\Psi$ be a(n essential) support function on $\E$ such that  $\Psi(\{0\})=\{0\}$. Then the  support function $\Phi_{\mathcal M^0(\Psi)}$ and the  essential support function $\Phi^e_{\mathcal M^0(\Psi)}$ of the bimodule $\mathcal M^0(\Psi)$ coincide with  $\Psi_-$, i.e.,
$$
\Phi_{\mathcal M^0(\Psi)}=\Psi_-=\Phi^e_{\mathcal M^0(\Psi)}.
$$
\end{theorem}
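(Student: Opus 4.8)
The plan is to establish the two equalities $\Phi_{\mathcal M^0(\Psi)}=\Psi_-$ and $\Phi^e_{\mathcal M^0(\Psi)}=\Psi_-$ separately. Recall that $\mathcal M^0(\Psi)$ is a $\mathcal T(\E)$-bimodule by Theorem \ref{p_bimodess}, that under the $p_\infty$-property every support function is an essential support function (Remark \ref{rem7}), and that $\Psi_-$ is admissible with $\Psi_-(\{0\})=\Psi(\{0\})=\{0\}$; since at $\{0\}$ all three of $\Phi_{\mathcal M^0(\Psi)},\ \Phi^e_{\mathcal M^0(\Psi)},\ \Psi_-$ take the value $\{0\}$, I would fix $E\in\E\setminus\{\{0\}\}$ throughout.

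\emph{The support function.} For $\Phi_{\mathcal M^0(\Psi)}\le\Psi_-$ I would prove $\mathcal M^0(\Psi)\subseteq\mathcal M(\Psi_-)$ and then invoke Proposition \ref{corrinj1}, by which $\Phi_{\mathcal M(\Psi_-)}=\Psi_-$. It suffices to check that a generating rank-one operator $f\otimes x$ (so $f\in N_-^{\perp}$, $x\in L$, $L_-\subset\Psi(N)$) satisfies $(f\otimes x)(E')\subseteq\Psi_-(E')$ for every $E'\in\E$: if $f(E')=\{0\}$ this is clear, and otherwise $N_-\subset E'$, which forces $N\subseteq E'$ (as $\E$ is a chain), while $L\subseteq\Psi(N)$ (either $L=L_-\subset\Psi(N)$, or $L=(L_-)_+\subseteq\Psi(N)$); if $N\subset E'$ then $\Psi(N)\subseteq\Psi_-(E')$ and we are done, and if $N=E'$ then $E'_-\subset E'$ (otherwise $N_-=N=E'$ and $f\in (E')^{\perp}$), so $\Psi_-(E')=\Psi(E')=\Psi(N)\supseteq L$. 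Passing to finite sums and the norm closure gives the inclusion. For $\Phi_{\mathcal M^0(\Psi)}\ge\Psi_-$, fix $N\in\E$ with $N_-\subset E$ and pick $f\in N_-^{\perp}\setminus E^{\perp}$; for every $L$ with $L_-\subset\Psi(N)$ and every $x\in L$, the generator $f\otimes x$ of $\mathcal M^0(\Psi)$ satisfies $(f\otimes x)(E)=\operatorname{span}\{x\}$, so $L\subseteq[\mathcal M^0(\Psi)E]$; taking the join over such $L$ and using \eqref{01} yields $\Psi(N)\subseteq[\mathcal M^0(\Psi)E]$, and the join over all such $N$ gives $\Psi_-(E)\subseteq[\mathcal M^0(\Psi)E]=\Phi_{\mathcal M^0(\Psi)}(E)$.

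\emph{The essential support function.} Here $\Phi^e_{\mathcal M^0(\Psi)}\le\Phi_{\mathcal M^0(\Psi)}=\Psi_-$ is immediate from Proposition \ref{proofess} and the previous step, so for the reverse inequality it is enough to show that every $L\in\E$ with $L\subset\Psi_-(E)$ satisfies $\dim(TE/L)=\infty$ for some $T\in\mathcal M^0(\Psi)$, so that each $L\in\E$ with $\dim(TE/L)<\infty$ for all $T\in\mathcal M^0(\Psi)$ necessarily contains $\Psi_-(E)$. I would first record the consequence of the $p_\infty$-property that \emph{any} strict inclusion $A\subset B$ in $\E$ has $\dim(B/A)=\infty$: were $\dim(B/A)$ finite, the set of elements of $\E$ lying strictly below $B$ would have a largest element, so $B_-\subset B$ and $A\subseteq B_-$, whereupon the $p_\infty$-property gives the contradiction $\infty=\dim(B/B_-)\le\dim(B/A)$. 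Consequently, given $L\subset\Psi_-(E)=\vee\{\Psi(F):F_-\subset E\}$, there is $F\in\E$ with $F_-\subset E$ and $\Psi(F)\not\subseteq L$, hence $L\subset\Psi(F)$, hence both $\dim(\Psi(F)/L)=\infty$ and $\dim(E/F_-)=\infty$.

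\emph{The single-operator construction (the crux).} From $\dim(E/F_-)=\infty$, an elementary induction using the Hahn--Banach theorem, together with the fact that a finite-codimensional subspace of the infinite-dimensional space $E/F_-$ is again infinite-dimensional, yields a biorthogonal system $(y_k)_{k\in\naturais}\subseteq E$, $(f_k)_{k\in\naturais}\subseteq F_-^{\perp}$ with $f_j(y_k)=\delta_{jk}$. Since $\Psi(F)=\overline{\bigcup\{N\in\E:N_-\subset\Psi(F)\}}$ by \eqref{01} and $\dim(\Psi(F)/L)=\infty$, we may choose $x_k\in N_k$ with $(N_k)_-\subset\Psi(F)$ such that $\{x_k+L:k\in\naturais\}$ is linearly independent in $\Psi(F)/L$; after rescaling the $x_k$ we may assume $\sum_k\|f_k\|\,\|x_k\|<\infty$, so that $T:=\sum_k f_k\otimes x_k$ converges in norm. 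Each summand is a generator of $\mathcal M^0(\Psi)$ (take $N:=F$, whence $f_k\in F_-^{\perp}$, $x_k\in N_k$, $(N_k)_-\subset\Psi(F)=\Psi(N)$), so $T\in\mathcal M^0(\Psi)$; and $Ty_k=x_k$ for all $k$, so $\{x_k+L\}\subseteq TE/L$ forces $\dim(TE/L)=\infty$, which finishes the proof that $\Psi_-(E)\subseteq\Phi^e_{\mathcal M^0(\Psi)}(E)$. This last step is the real obstacle: in the Hilbert-space setting and in Lemma \ref{maxess} one builds the required operator as a \emph{staircase} along a strictly monotone sequence of the nest, but the $p_\infty$-property supplies no such sequences; it is instead the equality $\dim(E/F_-)=\infty$ that it forces which lets the biorthogonal system play that role.
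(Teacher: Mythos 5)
Your proof is correct, and its core construction is the same as the paper's: to get $\Phi^e_{\mathcal M^0(\Psi)}(E)\supseteq\Psi_-(E)$ one builds an absolutely convergent sum $T=\sum_k f_k\otimes x_k$ of rank-one generators of $\mathcal M^0(\Psi)$, with the $f_k$ forming a (bi)orthogonal/triangular system against vectors $y_k\in E$ and the $x_k$ representing independent classes modulo $L$, so that $\dim(TE/L)=\infty$. Where you differ is in the organization, and the differences are worth noting. First, you obtain $\Phi_{\mathcal M^0(\Psi)}\le\Psi_-$ by proving the inclusion $\mathcal M^0(\Psi)\subseteq\mathcal M(\Psi_-)$ on generators and quoting Proposition \ref{corrinj1}, whereas the paper computes $[\mathcal M^0(\Psi)N_0]$ by a direct chain of equalities of closed spans; both are fine and of comparable length. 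Second, and more substantively, you isolate and prove the lemma that under the $p_\infty$-property \emph{every} strict inclusion $A\subset B$ in $\E$ satisfies $\dim(B/A)=\infty$ (via the observation that a finite-dimensional quotient would force $B_-\subset B$ with $\dim(B/B_-)<\infty$). The paper uses exactly this fact implicitly and without justification (e.g.\ ``Since $\E$ has the $p_\infty$-property, $\dim(L/M)=\infty$'' for an arbitrary $M\subset L$, and again for $N_0/M_-$), so making it explicit is a genuine improvement; it also lets you collapse the paper's two cases ($N_{0-}\subset N_0$ versus $N_{0-}=N_0$) into a single argument, by choosing $F$ with $F_-\subset E$ and $L\subset\Psi(F)$ and working with $\dim(E/F_-)=\infty$ directly, and it lets you conclude $\dim(TE/L)=\infty$ for $L$ itself rather than passing through an auxiliary $M\subset L$ and the join $\vee M_+$. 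The only points you leave compressed are routine: the extraction of the biorthogonal system from $\dim(E/F_-)=\infty$ (a triangular system $f_j(y_k)=0$ for $j>k$, $f_k(y_k)\neq0$ would already suffice), and the fact that the dense subspace $\bigcup\{N:N_-\subset\Psi(F)\}$ has infinite-dimensional image in $\Psi(F)/L$; neither is a gap.
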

\begin{proof}
Denote by $\Phi^e$ the essential support function of $\mathcal M^0(\Psi)$. Observe  that 
$$\Phi^e(\{0\})=\{0\}=\Psi(\{0\})=\Psi_-(\{0\}).
$$
Let $N_0$ be a subspace in $\E$. We start by showing that $\Phi^e(N_0)\subseteq \Psi_-(N_0)$. For that just observe that

$$\begin{aligned}
\Phi^e(N_0)&\subseteq\Phi_{\mathcal M^0(\Psi)}(N_0)=[\mathcal M^0(\Psi)(N_0)] \\
&=\overline{\text{span}}\{(f\otimes x)(N_0):L_-\subset\Psi(N),x\in L, f\in N_-^{\perp}\} \\
&=\overline{\text{span}}\{(f\otimes x)(N_0):L_-\subset\Psi(N),x\in L, f\in N_-^{\perp}\backslash N_0^{\perp}\}\\
&=\overline{\text{span}}\{x:L_-\subset\Psi(N),x\in L, N_0^{\perp}\subset N_-^{\perp}\}\\
&=\overline{\text{span}}\{x:L_-\subset\Psi(N),x\in L, N_-\subset N_0\}\\
&=\vee_{\substack{L_-\subset\Psi(N) \\ N_-\subset N_0}}L=\vee_{N_-\subset N_0}\Psi(N)=\Psi_-(N_0).
\end{aligned}
$$ 

Notice that, the equalities above, show that $\Phi_{\mathcal M^0(\Psi)}=\Psi_-$.

It only remains to show that $\Phi^e(N_0)\supseteq \Psi_-(N_0)$. We divide the proof in two cases: Case (i) ($N_{0-}\subset N_0$) and Case (ii) $N_{0-}= N_0.$
\vspace{0.2cm}

{\sl Case (i)}: $N_{0-}\subset N_0.$

Since $\E$ has the  $p_{\infty}$-property, we have that $\dim(N_0/N_{0-})=\infty$. 
Let $\{x_n+N_{0-}:n\in\mathbb N\}$ be a linearly independent set in $N_0/N_{0-}$. 
Since no non-trivial linear combination of the elements of $\{x_n+N_{0-}:n\in\mathbb N\}$ lies  in $N_{0-}$,  we have that, for all $n\in\mathbb N$, the vector $x_n$ does not lie in $N_{0-}+\text{span}\{x_1,...,x_{n-1}\}$.
Hence, for all $n\in\mathbb N$,  we can choose  $f_n\in N_{0-}^{\perp}$  such that $f_n(x_1)=\cdots=f_n(x_{n-1})=0$ and $f_n(x_n)\neq 0$.  

Let $L\in\E$ be such that $L_-\subset \Psi(N_0)$ and let $M\subset L$. Since $\E$ has the $p_\infty$-property,   $\dim(L/M)=\infty$, and we can choose a linearly independent subset $\{l_n+M:n\in\mathbb N\}$ of $L/M$. 

By \eqref{10}, we have that $f_n\otimes l_n\in\mathcal M^0(\Psi)$, for every $n\in\mathbb N$. 
Let $T$ be  the operator
$$T=\sum_{n=1}^{\infty}f_n\otimes l_n,
$$
where $\left\|f_n\right\|\le \left\|l_n\right\|/n^2$,  rendering the defining series of $T$  absolutely convergent. Since $\mathcal M^0(\Psi)$ is norm closed, it follows that $T\in\mathcal M^0(\Psi)$.

We see next that  $\dim(TN_0/M)=\infty$. We shall show that the set $\{Tx_n+M:n\in\mathbb N\}$ is linearly independent  in $TN_0/M$. 

Suppose on the contrary that there exist positive integers $n_1<\cdots< n_k$ and  scalars $a_1,...,a_k$, with $a_k\neq 0$, such that $$T(a_1x_{n_1}+\cdots+a_kx_{n_k})=
 \sum_{n=1}^{n_k}f_n(a_1x_{n_1}+\cdots+a_kx_{n_k})l_n
$$
lies in $M$ and assume without loss of generality that $a_k\neq 0$.
 Hence, we must have 
$$f_{n_k}(a_1x_{n_1}+\cdots+a_kx_{n_k})=0
$$ from which follows that $a_kf_{n_k}(x_{n_k})=0$. 
But $f_{n_k}(x_{n_k})\neq 0$ and, consequently,  $a_k=0$, yielding  a contradiction.

Since $\dim(TN_0/M)=\infty$, we have that $\Phi^e(N_0)\supseteq M_+$. In view of this holding   for every $M\subset L$, we can conclude that
$$
\Phi^e(N_0)\supseteq\vee_{\substack{M\subset L \\L_-\subset\Psi(N_0)}}M_+=\vee_{L_-\subset\Psi(N_0)}L=\Psi_-(N_0).$$

{\sl Case (i)}: $N_{0-}= N_0.$

Let $L\in\E$ be such that 
$$L_-\subset \Psi_-(N_0)=\vee_{M\subset N_0}\Psi(M).
$$
 Hence,  there exists $M\subset N_0$ such that $L_-\subset \Psi(M)$. 

Observing that  $\dim(N_0/M_-)=\infty$, we can choose a linearly independent set $\{x_n+M_-:n\in\mathbb N\}$   in $N_0/M_-$,  
 $f_n\in M_-^{\perp}$ such that 
$$f_n(x_1)=\cdots f_n(x_{n-1})=0
$$ with  $f_n(x_n)\neq 0$.

Notice that for  $L'\subset L$, we have $\dim(L/L')=\infty$ and, therefore, we can choose a linearly independent set  $\{l_n+L':n\in\mathbb N\}$  in $L/L'$. 

As in the previous case we can choose the sequence $(f_n)$ to be such that the operator 
$T=\sum_{n=1}^{\infty}f_n\otimes l_n$ lies in $\mathcal M^0(\Psi)
$. 

Similarly to the case above, we have that $\dim(TN_0/L')=\infty$ and that, therefore,  $\Phi^e(N_0)\supset L'_+.$ Hence
$$
\Phi^e(N_0)\supseteq\bigvee_{\substack{L'\subset L \\L_-\subset\Psi_-(N_0)}}L'_+=\bigvee_{L_-\subset\Psi_-(N_0)}L=\Psi_-(N_0), 
$$ 
as required.
\end{proof}

Bearing in mind that adding finite rank operator to a bimodule does not change its essential support function, we make the following definition.

Let $(\Phi,\Psi)$ be  an admissible  support function pair on a nest $\E$. Define 

\begin{equation}\label{08}
\mathcal M^0(\Phi,\Psi)=\mathcal M^0(\Psi)+\mathcal M(\Phi)_0.
\end{equation}

\begin{proposition}\label{p_m^0}
Let $\E$ be a nest with the $p_{\infty}$-property, let $\T(\E)$ be the corresponding nest algebra  and let $(\Phi,\Psi)$ be an admissible support function pair on $\E$. Then $\mathcal M^0(\Phi,\Psi)$ is a $\T(\E)$-bimodule whose support function pair is $(\Phi,\Psi_-)$. 
\end{proposition}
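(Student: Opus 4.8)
The plan is to verify separately the three claims packed into the statement: that $\mathcal M^0(\Phi,\Psi)$ is a $\T(\E)$-bimodule, that its support function $\Phi_{\mathcal M^0(\Phi,\Psi)}$ equals $\Phi$, and that its essential support function $\Phi^e_{\mathcal M^0(\Phi,\Psi)}$ equals $\Psi_-$; together these say that the support function pair of $\mathcal M^0(\Phi,\Psi)$ is $(\Phi,\Psi_-)$. The bimodule property is immediate: $\mathcal M^0(\Psi)$ is a $\T(\E)$-bimodule by Theorem \ref{p_bimodess}(ii), $\mathcal M(\Phi)_0$ is a $\T(\E)$-bimodule since the finite rank operators form a two-sided ideal of $\B(X)$ and $\mathcal M(\Phi)$ is a bimodule, and the algebraic sum of two $\T(\E)$-bimodules is again one.

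For the support function I would first establish the inclusion $\mathcal M^0(\Psi)\subseteq\mathcal M(\Phi)$. By Theorem \ref{miness} (which is where the $p_{\infty}$-property enters) we have $\Phi_{\mathcal M^0(\Psi)}=\Psi_-$, so every $T\in\mathcal M^0(\Psi)$ satisfies $TN\subseteq[\mathcal M^0(\Psi)N]=\Psi_-(N)$ for all $N\in\E$; and $\Psi_-\le\Phi$ because $\Psi\le\Phi$ makes $\Psi_-$ an admissible support function dominated by $\Phi$ and agreeing with it at $\{0\}$, whence $\Psi_-\le\Phi_-=\Phi$ by admissibility of $\Phi$. Therefore $\mathcal M(\Phi)_0\subseteq\mathcal M^0(\Phi,\Psi)\subseteq\mathcal M(\Phi)$, and since support functions are monotone under inclusion of bimodules it suffices to note $\Phi_{\mathcal M(\Phi)_0}=\Phi$. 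This last equality is obtained exactly as in the proof of Theorem \ref{t_maxpair}: Proposition \ref{samesupport} gives $\Phi_{\mathcal M(\Phi)_0}=\Phi_{\mathcal M(\Phi)}$, and Proposition \ref{corrinj1}(iii) gives $\Phi_{\mathcal M(\Phi)}=\Phi$ (recall $\Phi$ is admissible with $\Phi(\{0\})=\{0\}$). Squeezing, $\Phi_{\mathcal M^0(\Phi,\Psi)}=\Phi$.

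For the essential support function I would use the observation recorded just before the definition \eqref{08}: adjoining finite rank operators to a bimodule does not change its essential support function. Writing a general element of $\mathcal M^0(\Phi,\Psi)$ as $T_1+F$ with $T_1\in\mathcal M^0(\Psi)$ and $F\in\mathcal M(\Phi)_0$ of finite rank, one has, for all $N,L\in\E$, the estimates $\dim((T_1+F)N/L)\le\dim(T_1N/L)+\dim(FX)$ and, symmetrically, $\dim(T_1N/L)\le\dim((T_1+F)N/L)+\dim(FX)$; hence, for each fixed $L$, the condition appearing in \eqref{suppedef} holds for all operators of $\mathcal M^0(\Phi,\Psi)$ if and only if it holds for all operators of $\mathcal M^0(\Psi)$. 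Thus $\Phi^e_{\mathcal M^0(\Phi,\Psi)}=\Phi^e_{\mathcal M^0(\Psi)}$, which is $\Psi_-$ again by Theorem \ref{miness}. Combining this with the previous paragraph yields the support function pair $(\Phi,\Psi_-)$.

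The only step that requires genuine care is the computation of the ordinary support function: one must ensure that amalgamating $\mathcal M^0(\Psi)$ with $\mathcal M(\Phi)_0$ does not push the support strictly above $\Phi$, and this is exactly what the inequality $\Psi_-\le\Phi$ — equivalently, the inclusion $\mathcal M^0(\Psi)\subseteq\mathcal M(\Phi)$ — secures, allowing the argument to run parallel to that of Theorem \ref{t_maxpair}. The remaining steps are routine once the finite-rank invariance of the essential support is granted.
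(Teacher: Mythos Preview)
Your proof is correct and follows essentially the same approach as the paper's: both sandwich $\mathcal M(\Phi)_0\subseteq\mathcal M^0(\Phi,\Psi)\subseteq\mathcal M(\Phi)$ via $\Psi_-\le\Phi$ and Theorem \ref{miness}, then invoke Proposition \ref{samesupport} for the support function, and both appeal to the finite-rank invariance of the essential support function (which you additionally justify explicitly via the dimension estimates). Your treatment is slightly more detailed---you verify the bimodule property and the finite-rank invariance explicitly, and you cite Proposition \ref{corrinj1}(iii) for $\Phi_{\mathcal M(\Phi)}=\Phi$---but the argument is the same.
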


\begin{proof}
As observed above, adding finite rank operators does not change the essential support function. In consequence, the essential support function of $\mathcal M^0(\Phi,\Psi)$ coincides with that of $\mathcal M^0(\Psi)$, which has been shown  to be $\Psi_-$.  

In  the proof of Proposition \ref{miness}, we saw that the support function of $\mathcal M^0(\Psi)$ is $\Psi_-$. Since $\Psi_-\le \Psi\le\Phi$, it follows that $\mathcal M^0(\Psi)\subseteq\mathcal M(\Phi)$. 
Hence
 $$
 \mathcal M(\Phi)_0\subseteq \mathcal M^0(\Phi,\Psi)\subseteq\mathcal M(\Phi).
 $$

But, by Proposition \ref{samesupport}, 
  $$\Phi_{\mathcal M(\Phi)_0}=\Phi_{\mathcal M(\Phi)}=\Phi
  $$
   from which follows that $\mathcal M^0(\Phi,\Psi)$ also has support function $\Phi$. 
\end{proof}

  \begin{corollary}\label{c_m^0}
  Let $\E$ be a nest with the $p_{\infty}$-property, let $\T(\E)$ be the corresponding nest algebra  and let $(\Phi,\Psi)$ be an admissible support function pair on $\E$. If 
   $\Psi$ is left continuous, then $\mathcal M^0(\Phi,\Psi)$ is a $\T(\E)$-bimodule whose support function pair is $(\Phi,\Psi)$. 
\end{corollary}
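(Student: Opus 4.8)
The plan is to deduce this immediately from Proposition~\ref{p_m^0}. That proposition already establishes that, under the $p_\infty$-property, $\mathcal M^0(\Phi,\Psi)$ is a $\T(\E)$-bimodule and that its support function pair is $(\Phi,\Psi_-)$. Hence the only point left to settle is that the extra hypothesis that $\Psi$ be left continuous forces $\Psi_-=\Psi$; once this is known, substituting it into the conclusion of Proposition~\ref{p_m^0} finishes the proof.

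To verify $\Psi_-=\Psi$, I would argue as follows. By definition, $\Psi_-$ is the greatest admissible support function with $\Psi_-\le\Psi$, and ``admissible'' means exactly ``left continuous''. Since $\Psi$ is a support function which is now assumed to be left continuous, $\Psi$ is itself admissible, and trivially $\Psi\le\Psi$; by maximality of $\Psi_-$ we get $\Psi\le\Psi_-$, while $\Psi_-\le\Psi$ always holds, so $\Psi_-=\Psi$. Alternatively, one can simply run the explicit formula for $\Psi_-$: at $\{0\}$ one has $\Psi_-(\{0\})=\Psi(\{0\})$; if $E_-\subset E$ then $\Psi_-(E)=\Psi(E)$ by the formula; and if $E_-=E$ then $\Psi_-(E)=\vee_{F\subset E}\Psi(F)=\Psi(E_-)=\Psi(E)$, the last equality being the left continuity of $\Psi$ at $E$.

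There is no genuine obstacle here; this is a short bookkeeping step layered on top of Proposition~\ref{p_m^0}. The only thing to be mildly careful about is not to overlook the cases $E_-\subset E$ and $E=\{0\}$ when invoking left continuity, since the defining identity $\vee_{F\subset E}\Psi(F)=\Psi(E_-)$ is only imposed for $E\neq\{0\}$ and is only informative when $E_-=E$; in the remaining cases the equality $\Psi_-(E)=\Psi(E)$ is built into the formula for $\Psi_-$ and requires nothing.
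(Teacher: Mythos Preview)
Your proposal is correct and matches the paper's own proof: both deduce the result immediately from Proposition~\ref{p_m^0} by observing that left continuity of $\Psi$ gives $\Psi_-=\Psi$. Your additional justification of $\Psi_-=\Psi$ via the maximality definition (or the explicit formula) is fine and slightly more detailed than the paper, which simply asserts the equality.
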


\begin{proof}
This is an immediate consequence of Proposition \ref{p_m^0}, since in this case $\Psi=\Psi_-$
\end{proof}

As seen in Proposition \ref{p_bimodess}, $\mathcal M^0(\Psi)$ is contained in every norm closed $\T(\E)$-bimodule  with essential support function $\Psi$. On the other hand, by Lemma  \ref{rankoneban}, $\mathcal M_0(\Phi)$  is contained in every norm closed $\T(\E)$-bimodule with support function $\Phi$. In view of this, we propose the following.

\begin{conjecture}\label{co_final}
Let $\E$ be a nest, let $\T(\E)$ be the corresponding nest algebra, let $\J$ be a norm closed $\T(\E)$-bimodule   and let 
$(\Phi,\Psi)$ be the support function pair of $\J$. Then $\J$ contains $\mathcal M^0(\Phi,\Psi)$.
\end{conjecture}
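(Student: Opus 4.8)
The plan is to show that if $\J$ is a norm closed $\T(\E)$-bimodule with support function pair $(\Phi,\Psi)$, then every generator of $\M^0(\Psi)$ and every generator of $\M(\Phi)_0$ lies in $\J$; since $\J$ is a linear space and norm closed, this will give $\M^0(\Phi,\Psi)=\M^0(\Psi)+\M(\Phi)_0\subseteq\J$. The finite-rank part is already essentially available: by Lemma \ref{rankoneban}, a weakly closed bimodule contains the same finite rank operators as $\M(\Phi_\J)$; but we only assume $\J$ norm closed, so instead I would argue directly that $\M(\Phi)_0\subseteq\J$ using Theorem \ref{finiteranksum} to reduce to rank-$1$ operators $f\otimes w$ with $f\in E^\perp$, $w\in\wedge_{E\subset F}\Phi(F)$ (Lemma \ref{bimodrankcar}), and then the approximation argument of Lemma \ref{rankoneban} — which produces the rank-$1$ operator as a \emph{norm} limit of operators $f_\alpha\otimes J_\alpha w_\alpha\in\J$ obtained from Lemma \ref{perpspan}, needing only norm closedness of $\J$, not weak closedness. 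For the $\M^0(\Psi)$ part, the key observation is that the generators of $\M^0(\Psi)$ are exactly the operators $f\otimes x$ with $f\in N_-^\perp$, $x\in L$ and $L_-\subset\Psi(N)$, and Lemma \ref{cor1} says precisely that such operators lie in \emph{any} bimodule $\J$ whose \emph{essential} support is $\Psi$; combined with norm closedness this gives $\M^0(\Psi)\subseteq\J$ (cf. the proof of Theorem \ref{p_bimodess}(ii)).

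So the first step is to record that $\M^0(\Psi)\subseteq\J$ follows from Lemma \ref{cor1} and norm closedness, using that $\suppe=\Psi$ is the essential support of $\J$. The second step is to prove $\M(\Phi)_0\subseteq\J$: take a finite rank $T\in\M(\Phi)$, split it via Theorem \ref{finiteranksum} into rank-$1$ summands $f\otimes w\in\M(\Phi)$, apply Lemma \ref{bimodrankcar} to get $E\in\E$ with $f\in E^\perp$ and $w\in\wedge_{E\subset F}\Phi(F)$, then run the net construction of Lemma \ref{rankoneban} — choose $N_\alpha$ with $(N_\alpha)_+\supset E$ and $f_\alpha\in N_\alpha^\perp$ with $f_\alpha\to f$ weak$^*$, pick $w_\alpha\in (N_\alpha)_+\setminus E$, use $E\subset[\T(\E)w_\alpha]$ and the support-function identity $\Phi(N)=[\J N]$ (which holds since $\supp=\Phi$) to find $J_\alpha\in\J$ with $J_\alpha w_\alpha\to w$ in norm, and observe $f_\alpha\otimes J_\alpha w_\alpha=J_\alpha(f_\alpha\otimes w_\alpha)\in\J$ since $f_\alpha\otimes w_\alpha\in\T(\E)$. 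The third step is the assembly: $\M^0(\Phi,\Psi)=\M^0(\Psi)+\M(\Phi)_0\subseteq\J$.

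The main obstacle is the second step, and specifically the weak-versus-norm convergence issue. In Lemma \ref{rankoneban} the conclusion $f\otimes w\in\J$ is obtained because $f_\alpha\otimes J_\alpha w_\alpha$ converges \emph{weakly} to $f\otimes w$, and there weak closedness of $\J$ is invoked. Here $\J$ is only norm closed, so I would need the convergence to be in norm, which forces $f_\alpha\to f$ in norm rather than merely weak$^*$ — and Lemma \ref{perpspan} only delivers weak$^*$ convergence. This is precisely why the statement is left as a conjecture rather than a theorem: one would either need a strengthening of Lemma \ref{perpspan} to norm-density of $\spn\{N^\perp:N_+\supset E\}$ in $E^\perp$ (false in general), or a genuinely different argument — perhaps exploiting the $p$- or $p_\infty$-property to replace the net by a sequence and then using a diagonal/telescoping argument to absorb the weak$^*$ tails into an absolutely convergent series inside $\M^0(\Psi)$, at the cost of modifying $f\otimes w$ by an element already known to lie in $\J$. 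I expect that a full resolution requires additional hypotheses on $\E$ (e.g. the $p_\infty$-property, under which $\M^0(\Phi,\Psi)$ was defined), and that is the gap the conjecture flags.
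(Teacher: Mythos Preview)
The statement is a \emph{conjecture}; the paper offers no proof, only the motivating paragraph immediately preceding it. Your analysis is accurate and aligns with that motivation: the inclusion $\M^0(\Psi)\subseteq\J$ is genuinely established by Theorem~\ref{p_bimodess}(ii) (equivalently Lemma~\ref{cor1} together with norm closedness), exactly as you say. For the other half, the paper's remark that ``by Lemma~\ref{rankoneban}, $\M(\Phi)_0$ is contained in every norm closed $\T(\E)$-bimodule with support function $\Phi$'' is heuristic rather than rigorous --- Lemma~\ref{rankoneban} is stated and proved only for \emph{weakly} closed $\J$, and its proof concludes with a weak-operator limit because the approximating functionals $f_\alpha$ converge to $f$ only weak$^*$ (via Lemma~\ref{perpspan}). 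You have identified precisely this obstruction, and it is the reason the statement is recorded as a conjecture rather than a theorem. There is nothing further in the paper to compare against.
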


\end{document}